\def\csname citep\endcsname{\cite}
\newcommand{\NN}{\mathbb{N}}
\newcommand{\RR}{\mathbb{R}}
\newcommand{\Colon}{\colon\>}
\newcommand{\eps}{\varepsilon}
\newcommand{\sth}{\mathrel{;}}
\newcommand{\dl}{\mathrm{d}}
\newcommand{\mathmatr}[1]{\mathbf{#1}}
\newcommand{\Id}{\mathmatr{I}}
\newcommand{\scalp}[2]{\langle #1 \, , \, #2 \rangle}
\newcommand{\Trans}{T}
\newcommand{\Scalp}[3][autosizedelim]{\csname #1l\endcsname\langle #2 \, , \, #3 \csname #1r\endcsname\rangle}
\let\scrpt=\mathscr
\DeclareMathOperator{\Pp}{\mathbb{P}}
\DeclareMathOperator{\Ee}{\mathbb{E}}
\DeclareMathOperator{\Var}{Var}
\newcommand{\Normal}{\mathcal{N}}
\DeclareMathOperator{\One}{\mathds 1}
\newcommand{\Stein}{\scrpt S}
\newcommand{\Slep}{\scrpt U}
\newcounter{component}
\newcommand{\labelcomponent}[1]{{\normalfont (C#1)}}
\newcommand{\brevelabelcomponent}[1]{$(\breve{\mbox{\normalfont C}}#1)$}
\renewcommand{\thecomponent}{\labelcomponent{\arabic{component}}}
\newcommand{\component}{\refstepcounter{component}\item[\thecomponent]}
\newcommand{\assmptn}[1]{\refitem{{\normalfont (#1)}}}
\newenvironment{compprop}{%
  \list{\labelwidth 3em labelsep 0.3em \itemindent 0pt \topsep 1ex \parsep 0pt \itemsep 1ex}{}
}{%
  \endlist
}
\begin{document}

\section{Introduction}
\label{sc:Intr}

It is well-known that, roughly speaking, a sum of many random variables with sufficiently
nice distributions, which do not differ too much in scale and are not too dependent, approximately
follows a normal distribution. This fact is referred to as the central limit theorem and has
been formulated in numerous variants. It can be readily extended to $ \RR^d $-valued random
vectors.

One of the ways to make the sloppy statement above more precise is to provide a bound on
the error in the normal approximation.  One of the ways to measure the error is to
consider expectations of test functions from a given class $ \scrpt F $: for a given
$ \RR^d $-valued random vector $ W $ and a $ d $-variate normal vector $ Z $,
consider the supremum
\begin{equation}
\label{eq:supF}
 \sup_{f \in \scrpt F} \bigl| \Ee \bigl[ f(W) \bigr] - \Ee \bigl[ f(Z) \bigr] \bigr| \, .
\end{equation}
Several classes of test functions have been taken into consideration. In many cases,
the error has been estimated optimally up to a constant. In particular, for properly
scaled partial sums of a sequence of independent and identically distributed random
vectors with finite third absolute moment, the optimal rate of convergence to the
standard normal distribution is typically $ n^{-1/2} $, where $ n $ is the number of
the summands. This rate has been established for many classes of test functions.
For indicators of convex sets, see, e.~g., Bentkus~\citep{BLja:E}, G\"otze~\citep{Gtz}
or the author's previous work \citep{CMBE}.

Another important example is the class of sufficiently smooth functions with properly%
\break
bounded partial derivatives of a given order. Most of the results in the multivariate case
are derived for classes based on the second or higher derivatives: see, for example, Goldstein
and Rinott~\citep{GRn}, Rinott and Reinert~\cite{zer}, Chatterjee and Meckes~\citep{ChatMeck},
and Reinert and R\"ollin~\citep{RntRll09}. For i.~i.~d.\ random vectors with finite third
absolute moments, the optimal rate of convergence of $ n^{-1/2} $ has been established, too.

Surprisingly, classes based on the first-order derivatives seem to be more difficult.
Typically, one simply considers the class of functions with the Lipschitz constant
bounded from above by $ 1 $; for this class, the underlying supremum \eqref{eq:supF}
is referred to as the \emph{1-Wasserstein distance} (or also Kantorovich--Rubinstein
distance). In the context of the central limit theorem,
this distance has been well-established only in the univariate
case, where again the optimal rate of convergence of $ n^{-1/2} $ has been derived
for i.~i.~d.\ random vectors with finite third absolute moments: it can be, for
example, deduced from Theorem~1 of Barbour, Karo\'nski and Ruci\'nski~\citep{BKR}.
To the best of the author's knowledge, this is not the case in higher dimensions.
In this case, a suboptimal rate of $ n^{-1/2} \log n $ has been derived by
Galouet, Mijoule and Swan~\citep{GaMiSwW} as well as by Fang, Shao and Xu~\citep{FngShaoXuW}
(see also the references therein).

In the present paper, we succeed to remove the logarithmic factor -- the latter only
remains in the dependence on the dimension: see the bound \eqref{eq:Ind:CLT1}.
In addition, \eqref{eq:Ind:CLT1} does not require finiteness of the third
absolute moments -- it is a Lindeberg type bound.

The result is derived by Stein's method, which has been introduced in
\citep{St0}. The main idea of the method is to reduce the estimation of
the error to the estimation of expectations related to a solution of
a differential equation, which is now called Stein equation. For Lipschitz
test functions, the third derivatives of the solution apparently play the
key role. Unfortunately, they cannot be properly bounded -- see a
counterexample in the author's previous paper \citep{LFDec}, Remark~2
ibidem. However, we show that the third derivatives can be circumvented by
the second and fourth ones, which behave properly. The key step is carried out in
the estimate~\eqref{eq:circum}.

One of its major advantages of Stein's method is that it is by no means limited
to sums of independent random vectors. It works well under various dependence structures:
for an overview, the reader is referred to Barbour and Chen \citep{BCI,BCA}. Moreover, the random
variable to be approximated need not be a sum. We point out two approaches
called size and zero biassing: see Baldi, Rinott and Stein~\citep{BRS},
respectively Goldstein and Reinert~\citep{zer}.

In the present paper, we introduce a structure which generalizes the size and
zero biassing, and state two abstract results, Theorems~\ref{th:CLT:size} and
\ref{th:CLT:zero}. In Section~\ref{sc:Ind}, the latter is applied to sums
of independent random vectors, but we indicate how the abstract results
can be used beyond independence. In particular, our approach may be applied
in future for sums of random vectors where we can efficiently compare
conditional distributions given particular summands with their unconditional
counterparts -- see Example~\ref{ex:sum}. In addition, we indicate how it could
be used for Palm processes -- see Example~\ref{ex:PP}. However, we do not derive
explicit bounds for either of these two cases.

\section{Notation, assumptions and general results}
\label{sc:Res}

First, we introduce some basic notation:
\begin{itemize}
\item $ \Id_d $ denotes the $ d \times d $ identity matrix.
\item $ |\cdot| $ denotes the Euclidean norm.
\item For $ F \Colon \RR^d \to V $, where $ V $ is a finite-dimensional vector space,
define $ \Normal F := \Ee \bigl[ F(Z) \bigr] $, where $ Z $
is a standard $ d $-variate normal vector.
\item For $ f \Colon \RR^d \to \RR $ and $ r \in \NN $, denote by
$ \nabla^r f(w) $ the $ r $-th derivative of $ f $ at $ w $. This is a $ r $-fold tensor;
see Subsection~\ref{ssc:Der} for more detail.
\item Denote by $ M_0(f) $ the supremum norm of a function $ f $. Furthermore,
for an $ (r-1) $-times differentiable function $ f \Colon \RR^d \to \RR $, define
\begin{equation}
\label{eq:Mr}
 M_r(f) := \sup_{\substack{x, y \in \RR^d\\ x \ne y}} \frac{\bigl| \nabla^{r-1} f(x) - \nabla^{r-1} f(y) \bigr|_\vee}{|x - y|}
 \, ,
\end{equation}
where $ |\cdot|_\vee $ denotes the injective norm: see Subsections~\ref{ssc:Ten} and \ref{ssc:Der}.
If $ f $ is not everywhere $ (r-1) $-times differentiable, we put $ M_r(f) := \infty $.
\end{itemize}
For more details on notation and definitions, see Section~\ref{sc:Prel}.

\begin{remark}
\label{rk:Rademacher}
This way, if $ M_r(f) < \infty $, then $ \nabla^{r-1} f $ exists everywhere and is Lip\-schitz.
In this case, $ \bigl| \nabla^r f(x) \bigr|_\vee \le M_r(f) $ for all $ x $ where $ \nabla^{r-1} f $
is differentiable.
\end{remark}

\begin{remark}
\label{rk:MBd}
If $ M_r(f) < \infty $, there exist constants $ C $ and $ D $, such that $ |f(x)| \le C + D \, |x|^r $
for all $ x \in \RR^d $.
\end{remark}

Our main main results, Theorem~\ref{th:CLT:size} and \ref{th:CLT:zero}, will be based on
various \emph{assumptions} which refer to various \emph{components}. They are listed below and
labelled. Components are labelled by the letter C and a number, other labels stand for properties.

\begin{compprop}

\component\label{comp:W}
Consider a $ \RR^d $-valued random vector $ W $.

\assmptn{St}\label{ass:St}
Refers to \ref{comp:W}.
\textit{Suppose that $ \Ee |W|^2 < \infty $, $ \Ee W = 0 $ and $ \Var(W) = \Id_d $.}

\component\label{comp:xi}
Consider a measurable space $ (\Xi, \scrpt X) $.

\component\label{comp:V}
For each $ \xi \in \Xi $, consider a $ \RR^d $-valued random vector $ V_\xi $.
The random vectors $ V_\xi $ may be defined on different probability spaces.
Denote the underlying probability measures and expectations by $ \Pp_\xi $ and
$ \Ee_\xi $. Assume that the maps $ \xi \mapsto \Pp_\xi(V_\xi \in A) $ are
measurable for all Borel sets $ A \subseteq \RR^d $.

\component\label{comp:mu}
\xdef\compMucounter{\arabic{component}}
Consider an $ \RR^d $-valued measure $ \mu $ on $ (\Xi, \scrpt X) $.

\assmptn{S}\label{ass:size}
Refers to \ref{comp:W}--\ref{comp:mu}.
\textit{Suppose that $ \Ee |W| < \infty $ and that
\begin{equation}
\label{eq:size}
 \Ee \bigl[ f(W) W \bigr]
 =
 \int_{\Xi} \Ee_\xi \bigl[ f(V_\xi) \bigr] \, \mu(\dl \xi)
\end{equation}
for all bounded measurable functions $ f \Colon \RR^d \to \RR $.}

\end{compprop}

\begin{remark}
Assumption~\ref{ass:size} can be regarded as a generalization of the \emph{size-biassed
transformation} (see Baldi, Rinott and Stein~\citep{BRS}): for $ d = 1 $ and $ W \ge 0 $,
\ref{ass:size} is fulfilled with $ \Xi = \{ 0 \} $ and $ \mu(\{ 0 \}) = \Ee W $
if and only if the distribution of $ V_0 $ is the size-biassed distribution of $ W $.
\end{remark}

\begin{remark}
\label{rk:sizeE}
Under~\ref{ass:size}, we have $ \mu(\Xi) = \Ee W $ (put $ f(w) = 1 $).
\end{remark}

\noindent
We list two more important examples where Assumption~\ref{ass:size} is satisfied.

\begin{example}
\label{ex:sum}
Let $ \scrpt I $ be a countable set and let $ W = \sum_{i \in \scrpt I} X_i $ --
suppose that the latter sum exists almost surely.
Define $ \Xi := \scrpt I \times \RR^d $ and let $ \scrpt X $ be the product
(in terms of $ \sigma $-algebras) of the power set of $ \scrpt I $ and the Borel
$ \sigma $-algebra on $ \RR $. Choose probability measures $ \Pp_{i,x} $, $ i \in \scrpt I $,
$ x \in \RR^d $, so that for each $ i \in \scrpt I $, they determine the conditional distribution
of $ W $ given $ X_i $, i.~e., $ \Pp_{i,x}(A) = \Pp(W \in A \mid X_i = x) $.
For all $ i $ and $ x $, let $ V_{i,x} $ be the identity on $ \RR^d $.
Letting $ \mu(\{ i \} \times A) = \Ee \bigl[ X_i \One(X_i \in A) \bigr] $, we find that
\begin{align*}
 \Ee \bigl[ f(W) W \bigr]
 &=
 \sum_{i \in \scrpt I} \Ee \bigl[ f(W) X_i \bigr] \\
 &=
 \sum_{i \in \scrpt I} \int_{\RR^d} f(w) \, \Pp_{i,x}(\dl w) \, x \, \scrpt L(X_i)(\dl x) \\
 &=
 \int_{\scrpt I \times \RR^d} \bigl( \Ee_{(i, x)} f \bigr) \, \mu(\dl i \otimes \dl x)
 \, ,
\end{align*}
implying \ref{ass:size}.
\end{example}

\begin{example}
\label{ex:PP}
Let $ \mathcal P $ be a random point process on a space $ \Xi $ admitting
\emph{Palm processes} $ \mathcal P_\xi $, $ \xi \in \Xi $. Intuitively,
$ \mathcal P_\xi $ is the conditional distribution of $ \mathcal P $ given that
there is a point at $ \xi $. Strictly speaking, the Palm processes are characterized
by the formula
\[
 \Ee \left[ \int_{\Xi} \Phi(\xi, \mathcal P) \, \mathcal P(\dl \xi) \right]
 =
 \int_{\Xi} \Ee_\xi \Phi(\xi, \mathcal P_\xi) \, m(\dl \xi)
 \, ,
\]
where $ m $ is the mean measure of $ \mathcal P $ (for details, see Proposition~13.1.IV
of Daley and Vere-Jones~\citep{DV2}). Now take a function
$ F \Colon \Xi \to \RR^d $ and define $ W := \int_{\Xi} F(\xi) \, \mathcal P(\dl \xi) $.
Observe that
\begin{align*}
 \Ee \bigl[ f(W) W \bigr]
 &=
 \Ee \left[ \int_{\Xi} f \left( \int_{\Xi} F(\eta) \, \mathcal P(\dl \eta) \right) F(\xi) \, \mathcal P(\dl \xi) \right] \\
 &=
 \int_{\Xi} \Ee_\xi \left[ f \left( \int_{\Xi} F(\eta) \, \mathcal P_\xi(\dl \eta) \right) \right] F(\xi) \, m(\dl \xi)
 \, .
\end{align*}
Thus, we can set $ V_\xi = \int_{\Xi} F(\eta) \, \mathcal P_\xi(\dl \eta) $ and $ \mu = F \cdot m $.
\end{example}

\noindent
We continue listing components and properties required for Theorems~\ref{th:CLT:size} and \ref{th:CLT:zero}.

\begin{compprop}

\component\label{comp:nu}
For each $ \xi \in \Xi $, consider a $ \RR^d $-valued measure $ \nu_\xi $
and assume that the maps $ \xi \mapsto \nu_\xi(A) $ are measurable for all
$ A \in \scrpt X $.

\assmptn{Px}\label{ass:prox}
Refers to \ref{comp:W}--\ref{comp:V} and \ref{comp:nu}.
\textit{Suppose that $ \Ee |W| < \infty $. For each $ \xi \in \Xi $, suppose that
$ \Ee_\xi |V_\xi| < \infty $ and
\begin{equation}
\label{eq:prox}
 \Ee \bigl[ f(W) \bigr] - \Ee_\xi \bigl[ f(V_\xi) \bigr] = \int_\Xi
 \Scalp[big]{\Ee_\eta \bigl[ \nabla f(V_\eta) \bigr]}{\nu_\xi(\dl \eta)}
\end{equation}
for all continuously differentiable functions $ f \Colon \RR^d \to \RR $ with bounded derivative
(observe that $ \Ee \bigl| f(W) \bigr| $ and $ \Ee_\xi \bigl| f(V_\xi) \bigr| $ are finite
by Remark~\ref{rk:MBd}). The integral in \eqref{eq:prox} is defined according to Definition~\ref{df:IntTen}.}

\end{compprop}

\begin{remark}
Under Assumption~\ref{ass:prox}, we can measure the proximity of the distribution of $ V_\xi $
to the distribution of $ W $ in terms of $ \nu_\xi $. In particular, the 1-Wasserstein
distance is straightforward to estimate, as
$ \bigl| \Ee[f(W)] - \Ee_\xi[f(V_\xi)] \bigr| \le M_1(f) \, |\nu_\xi|(\Xi) $. However,
the latter distance will not be the only measure of proximity we shall need.
\end{remark}

\begin{example}
\label{ex:NewtLeib}
If $ W $ and $ V_\xi $ are defined on the same probability space, observe that
\begin{equation}
\label{eq:NewtLeib}
 \Ee[f(W)] - \Ee[f(V_\xi)]
 =
 \int_0^1 \Ee \bigl[ \Scalp{\nabla f \bigl( (1 - t) V_{\xi} + t W \bigr)}{W - V_\xi} \bigr] \,\dl t
 \, .
\end{equation}
Now suppose that there is a measurable map $ \psi \Colon \Xi \times [0, 1] \times \RR^d \to \Xi $, such that
the (unconditional) distribution of $ V_{\psi(\xi, t, y)} $ agrees with the conditional distribution
of $ (1 - t) V_{\xi} + t W $ given $ W - V_\xi = y $. Then \eqref{eq:NewtLeib} can be rewritten
as
\begin{equation}
\label{eq:NewtLeibCond}
 \Ee[f(W)] - \Ee[f(V_\xi)]
 =
 \int_0^1 \int_{\RR^d} \Scalp{\Ee_{\psi(\xi, t, y)}[\nabla f(V_{\psi(\xi, t, y)})]}{y} \, \scrpt L(W - V_\xi)(\dl y) \,\dl t
 \, .
\end{equation}
Now put
\begin{align*}
 \nu_\xi(B)
 &:=
 \int_0^1 \int_{\RR^d} \One \bigl( \psi(\xi, t, y) \in B \bigr) \, y \, \scrpt L(W - V_\xi)(\dl y) \,\dl t \\
 &=
 \int_0^1 \Ee \Bigl[ (W - V_\xi) \One \bigl( \psi(\xi, t, W - V_\xi) \in B \bigr) \Bigr] \,\dl t \, .
\end{align*}
A standard argument shows that
\[
 \int h \,\dl \nu_\xi
 =
 \int_0^1 \int_{\RR^d} h \bigl( \psi(\xi, t, y) \bigr) \, y \, \scrpt L(W - V_\xi)(\dl y) \,\dl t
 =
 \int_0^1 \Ee \Bigl[ h \bigl( \psi(\xi, t, W - V_\xi) \bigr) (W - V_\xi) \Bigr] \,\dl t
\]
for all bounded measurable functions $ h $.
Combining with \eqref{eq:NewtLeibCond}, \eqref{eq:prox} follows.
\end{example}


Before formulating the first main result, Theorem~\ref{th:CLT:size}, we introduce some more quantities:
%
%
%
%
%
%
%
\begin{align}
\label{eq:betasize1-2}
 \beta_1^{(\xi)} &:= |\nu_\xi|(\Xi) \, ,
 &
 \beta_2 &:= \int_\Xi \beta_1^{(\xi)} \, |\mu|(\dl \xi) \, ,
\\
\label{eq:betasize2-3}
 \beta_2^{(\xi)} &:= \int_\Xi \beta_1^{(\eta)} \, |\nu_\xi|(\dl \eta) \, ,
 &
 \beta_3 &:= \int_\Xi \beta_2^{(\xi)} \, |\mu|(\dl \xi) \, ,
\\
\label{eq:betasize12-23}
 \beta_{12}^{(\xi)}(a, b) &:= \int_\Xi \min \bigl\{ a, \, b \, \beta_1^{(\eta)} \bigr\} \, |\nu_\xi|(\dl \eta) \, ,
 &
 \beta_{23}(a, b) &:= \int_\Xi \beta_{12}^{(\xi)}(a, b) \, |\mu|(\dl \xi) \, ,
\\
\label{eq:betasize123-234}
 \beta_{123}^{(\xi)}(a, b, c) &:= \int_\Xi \min \Bigl\{
   a, \, b \, \beta_1^{(\eta)} + c \sqrt{\beta_2^{(\eta)}}
 \Bigr\} \, |\nu_\xi|(\dl \eta) \, ,
 &
 \beta_{234}(a, b, c) &:= \int_\Xi \beta_{123}^{(\xi)}(a, b, c) \, |\mu|(\dl \xi) \, .
\end{align}

\begin{theorem}
\label{th:CLT:size}
Under Assumptions~\ref{ass:St}, \ref{ass:size} and \ref{ass:prox}, and $ \beta_2 < \infty $, we have
\begin{align}
 \label{eq:CLT3xi}
 \bigl| \Ee \bigl[ f(W) \bigr] - \Normal f \bigr|
 &\le
 \frac{\beta_3}{3} \, M_3(f) \, ,
\\
 \label{eq:CLT2xi}
 \bigl| \Ee \bigl[ f(W) \bigr] - \Normal f \bigr|
 &\le
 \beta_{23} \left(
   1, \, \frac{\sqrt{2 \pi}}{4}
 \right) M_2(f) \, ,
\\
 \label{eq:CLT1xi}
 \bigl| \Ee \bigl[ f(W) \bigr] - \Normal f \bigr|
 &\le
 \beta_{234} \bigl( 1.8, \, 3.58 + 0.55 \log d, \, 3.5 \bigr) \, M_1(f)
 \, .
\end{align}
More precisely, for each of the inequalities, if the underlying $ M_r(f) $ in the right hand side is finite,
then $ \Ee \bigl| f(W) \bigr| $ and $ \Normal |f| $ are also finite and the inequality
holds true (under the convention $ \infty \cdot 0 = 0 $).
\end{theorem}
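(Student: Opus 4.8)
First I would prove each inequality separately, by Stein's method. Fix $f$ with the relevant $M_r(f)$ finite and let $g = g_f$ be the solution of the standard Stein equation for the $d$-variate standard normal,
\[
 \Delta g(w) - \Scalp{w}{\nabla g(w)} = f(w) - \Normal f \, ,
\]
given by the Ornstein--Uhlenbeck integral $g(w) = -\int_0^\infty \bigl( T_s f(w) - \Normal f \bigr)\,\dl s$ with $T_s f(w) := \Ee\bigl[ f(\mathrm{e}^{-s} w + \sqrt{1 - \mathrm{e}^{-2s}}\, Z) \bigr]$. Differentiating under the integral sign and moving surplus derivatives off $f$ onto the Gaussian density by Gaussian integration by parts supplies the regularity bounds I shall use; in particular $\bigl| \nabla^3 g \bigr|_\vee \le \tfrac13 M_3(f)$, $\bigl| \nabla^2 g \bigr|_\vee \le \tfrac12 M_2(f)$, $\bigl| \nabla^3 g \bigr|_\vee \le \tfrac{\sqrt{2\pi}}{4} M_2(f)$ and $\bigl| \nabla^2 g \bigr|_\vee \le \sqrt{2/\pi}\, M_1(f)$ (these are of the type collected in Section~\ref{sc:Prel}). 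The essential feature is that $\nabla^3 g$ is \emph{not} controllable by $M_1(f)$ alone -- cf.\ the counterexample in \citep{LFDec} -- which is exactly why \eqref{eq:CLT1xi} needs extra work.

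Next I would derive a master identity. From $\Ee\bigl[ f(W) \bigr] - \Normal f = \Ee\bigl[ \Delta g(W) \bigr] - \Ee\bigl[ \Scalp{W}{\nabla g(W)} \bigr]$, Assumption~\ref{ass:size} applied to the components of $\nabla g$ gives $\Ee\bigl[ \Scalp{W}{\nabla g(W)} \bigr] = \int_\Xi \Scalp{\Ee_\xi[\nabla g(V_\xi)]}{\mu(\dl\xi)}$; since $\mu(\Xi) = \Ee W = 0$ by Remark~\ref{rk:sizeE} and Assumption~\ref{ass:St}, I may subtract the constant $\Ee[\nabla g(W)]$ inside the integral, and Assumption~\ref{ass:prox} applied to the components of $\nabla g$ turns $\Ee_\xi[\nabla g(V_\xi)] - \Ee[\nabla g(W)]$ into $-\int_\Xi \Ee_\eta\bigl[ \nabla^2 g(V_\eta) \bigr]\, \nu_\xi(\dl\eta)$. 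For the term $\Ee[\Delta g(W)] = \Scalp{\Ee[\nabla^2 g(W)]}{\Id_d}$ I would use that Assumptions~\ref{ass:prox} and \ref{ass:size}, applied to the coordinate functions $w \mapsto w_k$, force $\Ee_\xi[V_\xi] = -\nu_\xi(\Xi)$ and hence $\Id_d = -\int_\Xi \nu_\xi(\Xi) \otimes \mu(\dl\xi)$, which recasts $\Ee[\Delta g(W)]$ into the same shape. Combining the two contributions yields
\[
 \Ee\bigl[ f(W) \bigr] - \Normal f
 =
 \int_\Xi \Scalp[Big]{\int_\Xi \bigl( \Ee_\eta\bigl[ \nabla^2 g(V_\eta) \bigr] - \Ee\bigl[ \nabla^2 g(W) \bigr] \bigr)\, \nu_\xi(\dl\eta)}{\mu(\dl\xi)} \, .
\]

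From this identity, \eqref{eq:CLT3xi} follows by one more application of Assumption~\ref{ass:prox} (to the components of $\nabla^2 g$), bounding $\bigl| \nabla^3 g \bigr|_\vee \le \tfrac13 M_3(f)$ and collapsing the resulting iterated integral, via Fubini and the definitions \eqref{eq:betasize1-2}--\eqref{eq:betasize2-3}, to $\tfrac{\beta_3}{3} M_3(f)$. For \eqref{eq:CLT2xi} I would estimate the matrix $\Ee_\eta[\nabla^2 g(V_\eta)] - \Ee[\nabla^2 g(W)]$ by the minimum of the direct bound $2 \cdot \tfrac12 M_2(f) = M_2(f)$ and, via that same extra use of Assumption~\ref{ass:prox}, of $\tfrac{\sqrt{2\pi}}{4} M_2(f)\, \beta_1^{(\eta)}$, then collapse through \eqref{eq:betasize12-23}. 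For the crucial bound \eqref{eq:CLT1xi}, where $\nabla^3 g$ is no longer available, I would again bound $\Ee_\eta[\nabla^2 g(V_\eta)] - \Ee[\nabla^2 g(W)]$ by the minimum of two estimates: the direct one, of order $M_1(f)$ (coefficient $1.8$, coming from $\bigl| \nabla^2 g \bigr|_\vee \le \sqrt{2/\pi}\, M_1(f)$ with a small refinement exploiting Lipschitz continuity of the integrands), and the estimate \eqref{eq:circum}. The latter represents $\nabla^2 g$ through its Ornstein--Uhlenbeck integral and splits it at a suitable threshold: on the tail each integrand is smooth with bounded injective norm, so one further application of Assumption~\ref{ass:prox} produces a factor $\beta_1^{(\eta)}$ together with a constant that grows only logarithmically in $d$; on the near part, the would-be $\nabla^3 g$ is written as a gradient of a bounded matrix field and a further application of Assumption~\ref{ass:prox} combined with a Cauchy--Schwarz step brings in the square root $\sqrt{\beta_2^{(\eta)}}$ without producing any logarithm of the $\beta$'s. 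Inserting $\min\bigl\{ 1.8\, M_1(f),\, (3.58 + 0.55 \log d)\, \beta_1^{(\eta)} M_1(f) + 3.5 \sqrt{\beta_2^{(\eta)}}\, M_1(f) \bigr\}$ into the master identity and collapsing through \eqref{eq:betasize123-234} gives \eqref{eq:CLT1xi}.

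The hard part will be the estimate \eqref{eq:circum}: one must choose the splitting and balance the logarithmically divergent tail against the near part so that \emph{no} logarithm of the $\beta$-quantities (hence, in the i.i.d.\ case, no $\log n$) survives, while keeping the remaining dimensional dependence down to $\log d$. Secondarily, since Assumptions~\ref{ass:size} and \ref{ass:prox} are stated only for bounded, respectively continuously differentiable with bounded derivative, test functions, their use above on the possibly unbounded derivatives of $g$, on the smoothed integrands, and (for $r = 2, 3$) on $f$ itself must be justified by a truncation and approximation argument, which also delivers the asserted finiteness of $\Ee\bigl| f(W) \bigr|$ and $\Normal|f|$.
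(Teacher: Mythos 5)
Your reorganization — proving Theorem~\ref{th:CLT:size} directly via the classical Ornstein--Uhlenbeck Stein solution $g$, rather than going through Theorem~\ref{th:CLT:zero} and the Slepian interpolant $\Slep_\alpha f$ — is legitimate, and your master identity
\[
 \Ee\bigl[ f(W) \bigr] - \Normal f
 =
 \int_\Xi \Scalp[Big]{\int_\Xi \bigl( \Ee_\eta\bigl[ \nabla^2 g(V_\eta) \bigr] - \Ee\bigl[ \nabla^2 g(W) \bigr] \bigr)\, \nu_\xi(\dl\eta)}{\mu(\dl\xi)}
\]
is correct: it is the image, under Proposition~\ref{pr:size2zero}, of the paper's identity~\eqref{eq:zero:EStein:D} applied to $g$. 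Your derivations of~\eqref{eq:CLT3xi} and~\eqref{eq:CLT2xi} from it are essentially the paper's argument recast in a different parameterization, with the interpolation variable $\alpha$ replaced by OU time and the iterated estimate hidden inside the regularity bounds for $g$. That part is fine.

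For~\eqref{eq:CLT1xi}, however, there is a genuine gap: you have not identified the bootstrap that is the actual engine for removing the $\log n$. In your sketch, the ``near part'' of the OU integral is controlled by Cauchy--Schwarz and produces $\sqrt{\beta_2^{(\eta)}}$ — this corresponds to the paper's~\eqref{eq:circum} and is correct — but the ``tail part'' multiplied by $\beta_1^{(\eta)}$ still involves a logarithmically divergent integral, and you give no mechanism for why its constant should be $\log d$ rather than $\log(1/\beta)$ (hence $\log n$). Splitting at a threshold determined by $\beta_1^{(\eta)}$ or $\beta_2^{(\eta)}$ necessarily yields $\log(1/\beta)$. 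The paper avoids this by \emph{not} using the crude bound $\bigl| \Ee[\nabla^3 \tilde f_\alpha(W)] \bigr|_\vee \le M_3(\tilde f_\alpha)$ throughout; instead it compares $\Ee[\nabla^3 \tilde f_\alpha(W)]$ to its Gaussian value $\Normal\,\nabla^3 \tilde f_\alpha$ and bounds the discrepancy by $\delta\, M_4(\tilde f_\alpha)$, where $\delta$ in~\eqref{eq:delta} is precisely the Wasserstein-type error being estimated. This makes $\delta$ enter the logarithm as $\log\bigl(c_3\delta / (2 c_2 \sin\tfrac{\eps}{2})\bigr)$; an initial Gaussian smoothing at scale $\eps$ costs $2\sqrt{d}\, M_1(f)\sin\tfrac{\eps}{2}$, and the choice $\eps = 2\arcsin\tfrac{\delta}{18\sqrt{d}}$ cancels $\delta$ from the logarithm, leaving only $\tfrac12\log d$ plus a constant; the resulting self-referential inequality $\delta \le \tfrac{\delta}{9} + \breve\beta_{234}(\cdots)$ is then solved for $\delta$. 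Your plan omits all three ingredients (comparison to the Gaussian expectation, the $\delta$-dependent smoothing scale, and solving the fixed-point inequality), so as written it does not close. In fact even the constant $1.8$ in the first slot of $\beta_{234}$ encodes the $\tfrac{9}{8}$ factor from resolving $\delta \le \tfrac{\delta}{9} + \cdots$, which your $|\nabla^2 g|_\vee \le \sqrt{2/\pi}\, M_1(f)$ bound cannot produce on its own.

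One smaller point: the paper proves Theorem~\ref{th:CLT:size} by checking $\breve\beta_3 \le \beta_3$, $\breve\beta_{23} \le \beta_{23}$, $\breve\beta_{234} \le \beta_{234}$ and invoking Theorem~\ref{th:CLT:zero} plus Proposition~\ref{pr:size2zero}, which keeps the analytic heavy lifting in one place; your direct route would duplicate it. Whichever route you take, the truncation-and-approximation step you mention at the end (to justify applying~\ref{ass:size} and~\ref{ass:prox} to unbounded derivatives of $g$) is not a formality — it is carried out in the paper via Propositions~\ref{pr:size:abs} and~\ref{pr:zeroaltStr} and Lemmas~\ref{lm:proxE}, \ref{lm:IntSteinFin}, \ref{lm:zero:EW3fin}, and it is what delivers the asserted finiteness of $\Ee|f(W)|$.
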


\noindent
We defer the proof to Subsection~\ref{ssc:Prf}.

\medskip
Now we turn to a more direct construction with the underlying counterpart
of the preceding result. Although the additional Component~\ref{comp:Mu}
satisfying Assumption~\ref{ass:zero} can be constructed from
Component~\ref{comp:mu} satisfying Assumption~\ref{ass:size}
(see Proposition~\ref{pr:size2zero}), Theorem~\ref{th:CLT:zero} might provide
better bounds than Theorem~\ref{th:CLT:size}. Moreover, as we shall see in
Subsection~\ref{ssc:Prf}, Theorem~\ref{th:CLT:size} is actually a direct consequence of
Theorem~\ref{th:CLT:zero} and Proposition~\ref{pr:size2zero}.

\begin{compprop}

\refitem{\brevelabelcomponent{\compMucounter}}\label{comp:Mu}
Consider an $ \RR^d \otimes \RR^d $-valued measure $ \breve \mu $ on $ (\Xi, \scrpt X) $.

\assmptn{Z}\label{ass:zero}
Refers to \ref{comp:W}--\ref{comp:V} and \ref{comp:Mu}.
\textit{Suppose that $ \Ee |W|^2 < \infty $ and that
\begin{equation}
\label{eq:zero}
 \Ee \bigl[ f(W) W \bigr]
 =
 \int_\Xi \breve \mu(\dl \xi) \Ee_\xi \bigl[ \nabla f(V_\xi) \bigr]
 \, ,
\end{equation}
for all continuously differentiable functions $ f \Colon \RR^d \to \RR $ with bounded derivative,
recalling the identification of a $ 2 $-tensor $ \phi $ with a linear map $ \tilde L_\phi $
in Subsection~\ref{ssc:Ten}, as well as Definition~\ref{df:IntTen} (observe that $ \Ee \bigl| f(W) W \bigr| $
is finite by Remark~\ref{rk:MBd}).}

\end{compprop}

\begin{remark}
Assumption~\ref{ass:zero} can be regarded as a more flexible variant of the \emph{zero bias transformation}
introduced by Goldstein and Reinert~\citep{zer}. Indeed, for $ d = 1 $, Assumption~\ref{ass:zero}
is fulfilled with $ \Xi = \{ 0 \} $, $ \breve \mu(\{ 0 \}) = 1 $, provided that the distribution of
$ V_0 $ is the zero bias transform of the distribution of $ W $.
\end{remark}

\noindent
Assumption~\ref{ass:zero} can be formulated alternatively in the following way:

\begin{compprop}
\assmptn{$\mathrm{Z}'$}\label{ass:zeroalt}
Refers to \ref{comp:W}--\ref{comp:V} and \ref{comp:Mu}.
\textit{Suppose that $ \Ee |W|^2 < \infty $ and that
\begin{equation}
\label{eq:zeroalt}
 \Ee \bigl[ \scalp{F(W)}{W} \bigr]
 =
 \int_\Xi \Scalp[big]{\Ee_\xi
   \bigl[ \nabla F(V_\xi) \bigr]}{\breve \mu(\dl \xi)}
\end{equation}
for all continuously differentiable maps $ F \Colon \RR^d \to \RR^d $ with bounded derivative
(observe that $ \Ee \bigl[ |F(W)| \, |W| \bigr] $ is finite by Remark~\ref{rk:MBd}).}

\end{compprop}

\begin{proposition}
\label{pr:zeroaltequiv}
Assumptions~\ref{ass:zero} and \ref{ass:zeroalt} are equivalent.
\end{proposition}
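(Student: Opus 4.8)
The plan is to show that \eqref{eq:zero} and \eqref{eq:zeroalt} are just two encodings of the same linear identity, so that each follows from the other by a routine density/test-function argument. First I would observe that a continuously differentiable map $ F \Colon \RR^d \to \RR^d $ with bounded derivative is nothing but a $ d $-tuple $ F = (f_1, \dots, f_d) $ of scalar functions $ f_j \Colon \RR^d \to \RR $, each continuously differentiable with bounded derivative, and conversely. The idea is to pair the tensor-valued identity \eqref{eq:zero} with coordinate vectors $ e_j $ and sum, using linearity of the integral from Definition~\ref{df:IntTen} and the identification of $ 2 $-tensors with linear maps from Subsection~\ref{ssc:Ten}.

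To go from \ref{ass:zero} to \ref{ass:zeroalt}: given such an $ F $, apply \eqref{eq:zero} to each component $ f_j $, obtaining
\[
 \Ee \bigl[ f_j(W) W \bigr]
 =
 \int_\Xi \breve \mu(\dl \xi) \, \Ee_\xi \bigl[ \nabla f_j(V_\xi) \bigr]
 \, .
\]
Taking the inner product of the $ j $-th such identity with $ e_j $ and summing over $ j $, the left side becomes $ \sum_j \Ee[f_j(W) \scalp{W}{e_j}] = \Ee[\scalp{F(W)}{W}] $, while the right side assembles — via the identification $ \nabla F(v) = \sum_j e_j \otimes \nabla f_j(v) $ and the definition of the integral against $ \breve \mu $ — into $ \int_\Xi \Scalp[big]{\Ee_\xi[\nabla F(V_\xi)]}{\breve \mu(\dl \xi)} $, which is precisely \eqref{eq:zeroalt}. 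The converse direction is obtained by specializing \eqref{eq:zeroalt} to maps of the form $ F = f \cdot e_j $ (equivalently, $ F(w) = f(w) \, a $ for a fixed vector $ a $, then letting $ a $ range over a basis): then $ \scalp{F(W)}{W} = f(W) \scalp{W}{e_j} $ and $ \nabla F(v) = e_j \otimes \nabla f(v) $, so \eqref{eq:zeroalt} reduces to the $ j $-th coordinate of \eqref{eq:zero}; since the coordinate identities for all $ j $ together are equivalent to the tensor identity, this yields \eqref{eq:zero}.

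The main obstacle — really the only point requiring care — is bookkeeping with the tensor/linear-map identifications and with the vector-measure integral of Definition~\ref{df:IntTen}: one must check that "pair with $ e_j $ and sum" commutes with the integral $ \int_\Xi \breve \mu(\dl \xi)(\cdot) $, i.e.\ that $ \Scalp[big]{\int_\Xi \breve\mu(\dl\xi)\, \Ee_\xi[\nabla F(V_\xi)]}{\text{no pairing}} $ read as a linear map agrees termwise with the scalar integrals above. This is a consequence of the linearity of that integral in the integrand and of the consistency between the two ways of viewing a $ 2 $-tensor, both of which are set up in Section~\ref{sc:Prel}; I would also note that the integrability side conditions match, since $ \Ee[|F(W)|\,|W|] < \infty $ (Remark~\ref{rk:MBd}) controls each $ \Ee[|f_j(W)|\,|W|] $ and vice versa. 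With the identifications in hand, the equivalence is immediate and no analytic estimates are needed.
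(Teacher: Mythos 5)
Your proof is correct and takes essentially the same route as the paper's: both reduce to the observation that \eqref{eq:zeroalt} for all $F$ is, by linearity, equivalent to \eqref{eq:zeroalt} for $F(w) = f(w)\,u$ with $u$ ranging over a basis, and then use \eqref{eq:ActTensScalp} (together with $\nabla(f u) = u \otimes \nabla f$) to identify that restricted family of identities with the coordinate form of \eqref{eq:zero}. The paper compresses both directions into one display; you write out the forward and backward directions separately, but the tensor bookkeeping and the key density observation are the same.
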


\begin{proof}
Assumption \ref{ass:zeroalt} remains the same if we require \eqref{eq:zeroalt} only to hold
for the vector functions of form $ F(w) = f(w) \, u $, where $ f $ is continuously differentiable
with bounded derivative. Recalling
\eqref{eq:ActTensScalp}, we find that in this case, \eqref{eq:zeroalt} reduces to
\[
 \Scalp[Big]{\Ee \bigl[ f(W) \bigr] W}{u}
 =
 \int_\Xi \Scalp[Big]%
   {\breve \mu(\dl \xi)}
   {u \otimes \Ee_\xi \bigl[ \nabla f(V_\xi) \bigr]}
 =
 \int_\Xi \Scalp[Big]%
   {\breve \mu(\dl \xi) \Ee_\xi \bigl[ \nabla f(V_\xi) \bigr]}{u}
 \, .
\]
However, this is equivalent to \eqref{eq:zero}.
\end{proof}

\begin{remark}
\label{rk:zerovar}
Under~\ref{ass:zero}, we have $ \Ee W = 0 $ and $ \Var(W) = \breve \mu(\Xi) $.
The first equality follows by substituting $ f \equiv 1 $ into \eqref{eq:zero}.
Substituting $ f(w) = \scalp{w}{u} $ and recalling \eqref{eq:ActPureTens},
we find that $ \breve \mu(\Xi) u = \Ee \bigl[ \scalp{W}{u} W \bigr] = \Ee(W \otimes W) u $
for all $ u \in \RR^d $, so that $ \breve \mu(\Xi) = \Ee(W \otimes W) $.
Identifying $ 2 $-tensors with matrices, we can rewrite this as $ \breve \mu(\Xi) = \Ee(W W^\Trans)
= \Var(W) $.
\end{remark}

\begin{proposition}
\label{pr:size2zero}
Assume \ref{ass:size} and \ref{ass:prox}, recall \eqref{eq:betasize1-2} and suppose that if $ \beta_2 < \infty $.
Then Assumption~\ref{ass:zero}
is satisfied for Component~\ref{comp:Mu} defined by
$ \breve \mu(B) := - \int_\Xi \int_\Xi \One(\eta \in B) \, \mu(\dl \xi) \otimes \nu_\xi(\dl \eta) $
in view of Definition~\ref{df:Phisuccmeas}.
\end{proposition}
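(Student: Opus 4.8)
The plan is to derive the defining identity~\eqref{eq:zero} of Assumption~\ref{ass:zero} by feeding~\eqref{eq:prox} into~\eqref{eq:size}: after that substitution the double integral that appears is precisely an integral against $\breve\mu$. The one complication is that~\eqref{eq:size} is postulated only for \emph{bounded} $f$, whereas~\eqref{eq:zero} must be checked for all continuously differentiable $f$ with bounded derivative, which grow only linearly (Remark~\ref{rk:MBd}). Accordingly the argument runs in three stages: establish~\eqref{eq:zero} for bounded $f$; bootstrap from $\beta_2<\infty$ to $\Ee|W|^2<\infty$; and finally remove the boundedness by truncation.

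For the first stage, fix a bounded continuously differentiable $f$ with bounded derivative. All integrals below converge absolutely: $|\mu|(\Xi)<\infty$ (recall $\mu(\Xi)=\Ee W$ by Remark~\ref{rk:sizeE}), $|\nu_\xi|(\Xi)=\beta_1^{(\xi)}$, and $\int_\Xi\beta_1^{(\xi)}\,|\mu|(\dl\xi)=\beta_2<\infty$; in particular, by the last bound, $\breve\mu$ is a well-defined $\RR^d\otimes\RR^d$-valued measure with $|\breve\mu|(\Xi)\le\beta_2$ in the sense of Definition~\ref{df:Phisuccmeas}. Using~\eqref{eq:size} and then substituting for $\Ee_\xi[f(V_\xi)]$ from~\eqref{eq:prox},
\begin{align*}
 \Ee\bigl[f(W)\,W\bigr]
 &=\int_\Xi\Ee_\xi\bigl[f(V_\xi)\bigr]\,\mu(\dl\xi)
 =\int_\Xi\Bigl(\Ee\bigl[f(W)\bigr]-\int_\Xi\Scalp[big]{\Ee_\eta\bigl[\nabla f(V_\eta)\bigr]}{\nu_\xi(\dl\eta)}\Bigr)\,\mu(\dl\xi)\\
 &=\Ee\bigl[f(W)\bigr]\,\mu(\Xi)-\int_\Xi\mu(\dl\xi)\int_\Xi\Scalp[big]{\Ee_\eta\bigl[\nabla f(V_\eta)\bigr]}{\nu_\xi(\dl\eta)}\,.
\end{align*}
Since $\Ee W=0$, the first summand vanishes. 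For the second, absolute integrability (guaranteed by $\beta_2<\infty$) justifies Fubini's theorem for the successively built $\RR^d$- and $\RR^d\otimes\RR^d$-valued measures, which, together with the identification $\phi\mapsto\tilde L_\phi$ and Definitions~\ref{df:IntTen} and~\ref{df:Phisuccmeas}, gives
\[
 \int_\Xi\breve\mu(\dl\eta)\,h(\eta)=-\int_\Xi\mu(\dl\xi)\int_\Xi\Scalp[big]{h(\eta)}{\nu_\xi(\dl\eta)}
\]
for every bounded measurable $h\Colon\Xi\to\RR^d$; applied to $h(\eta)=\Ee_\eta[\nabla f(V_\eta)]$ (bounded by $M_1(f)$ and measurable by the standing measurability hypotheses), this yields~\eqref{eq:zero} for the present $f$.

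For the second stage, I apply the first stage to $f_n(w):=\scalp{w}{u}\,\psi_n(w)$ with $|u|=1$, where $\psi_n$ is a fixed smooth cutoff equal to $1$ on the ball of radius $n$, supported in the ball of radius $2n$, with $|\nabla\psi_n|\le 2/n$. Each $f_n$ is bounded and continuously differentiable with $M_1(f_n)\le 5$, so $\bigl|\Ee[f_n(W)\,W]\bigr|=\bigl|\int_\Xi\breve\mu(\dl\xi)\,\Ee_\xi[\nabla f_n(V_\xi)]\bigr|\le 5\,|\breve\mu|(\Xi)\le 5\beta_2$. Letting $u$ range over the coordinate vectors and applying Fatou's lemma as $n\to\infty$ (the relevant component $\psi_n(W)\,W_j^2$ is nonnegative and converges to $W_j^2$) gives $\Ee W_j^2\le 5\beta_2$ for each $j$, hence $\Ee|W|^2<\infty$, which is itself part of the conclusion~\ref{ass:zero}. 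For the third stage, take an arbitrary continuously differentiable $f$ with bounded derivative and set $f_R:=f\,\psi_R$; this is bounded and continuously differentiable with $\nabla f_R=\psi_R\nabla f+f\nabla\psi_R$ bounded uniformly in $R\ge 1$ (by $5M_1(f)+2|f(0)|$), so the first stage gives $\Ee[f_R(W)\,W]=\int_\Xi\breve\mu(\dl\xi)\,\Ee_\xi[\nabla f_R(V_\xi)]$. On the left, $f_R(W)\,W\to f(W)\,W$ pointwise under the dominating function $(|f(0)|+M_1(f)|W|)\,|W|\in L^1$ (second stage); on the right, $\nabla f_R(V_\xi)\to\nabla f(V_\xi)$ pointwise under a uniform bound, so $\Ee_\xi[\nabla f_R(V_\xi)]\to\Ee_\xi[\nabla f(V_\xi)]$, and then dominated convergence against the finite measure $|\breve\mu|$ gives the convergence of the right-hand sides. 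Letting $R\to\infty$ establishes~\eqref{eq:zero} in full generality.

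The algebra itself is just the one-line chain~\eqref{eq:size}$\,\to\,$\eqref{eq:prox}$\,\to\,$definition of $\breve\mu$; the work — and the main obstacle — lies in the measure-theoretic bookkeeping: making rigorous the Fubini interchange for the successively defined tensor-valued measure $\breve\mu$ (which is exactly what Definition~\ref{df:Phisuccmeas} is for), and, more substantially, upgrading the hypothesis $\beta_2<\infty$ to the square-integrability $\Ee|W|^2<\infty$ that~\ref{ass:zero} requires but that~\ref{ass:size}--\ref{ass:prox} do not furnish for free. One also uses $\Ee W=0$; this holds under Assumption~\ref{ass:St}, which is in force wherever this proposition is invoked (e.g.\ in the proof of Theorem~\ref{th:CLT:size}), and by Remark~\ref{rk:zerovar} it is in any case necessary for~\eqref{eq:zero}.
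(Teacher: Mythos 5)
Your argument is correct, and the core algebra (feed \eqref{eq:prox} into \eqref{eq:size}, use $\mu(\Xi)=\Ee W=0$, Fubini, recognize $\breve\mu$) is exactly the paper's. Where you diverge is in the measure-theoretic bootstrapping. The paper first upgrades \eqref{eq:size} itself to $C^1$ functions with bounded derivative, by invoking Proposition~\ref{pr:size:abs} (whose hypothesis $\int_\Xi\Ee_\xi|f(V_\xi)|\,|\mu|(\dl\xi)<\infty$ is checked via $\Ee_\xi|V_\xi|\le\Ee|W|+\beta_1^{(\xi)}$ from Lemma~\ref{lm:proxE} and $\beta_2<\infty$), then plugs $\tilde f(w)=\sqrt{1+|w|^2}$ into the extended \eqref{eq:size} to read off $\Ee|W|^2<\infty$, and only then performs the substitution and Fubini argument once, directly for unbounded $f$. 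You instead prove \eqref{eq:zero} for bounded $C^1$ test functions first and then bootstrap \emph{through} \eqref{eq:zero}: the cutoff $f_n(w)=\scalp{w}{u}\psi_n(w)$ plus Fatou yields $\Ee|W|^2<\infty$, and the truncation $f_R=f\,\psi_R$ plus dominated convergence (on both sides, using $|\breve\mu|_\wedge(\Xi)\le\beta_2$) yields the general case. This is self-contained, at the price of the ad hoc derivative bounds $M_1(f_n)\le5$ and $|\nabla f_R|\le 5M_1(f)+2|f(0)|$; the paper's version is more modular since the truncation work was already packaged in Proposition~\ref{pr:size:abs}. You also correctly flag that $\Ee W=0$ is used though it is not among the proposition's listed hypotheses; the paper's own proof invokes it tacitly as ``the assumption $\Ee W=0$'', relying, as you observe, on Assumption~\ref{ass:St} being in force in every application. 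One small imprecision: when you write $|\breve\mu|(\Xi)$ you should specify the projective total variation $|\breve\mu|_\wedge(\Xi)$, which is the norm for which $|\breve\mu|_\wedge(\Xi)\le\beta_2$ and against which the operator bound $\bigl|\int_\Xi\breve\mu(\dl\xi)\,h(\xi)\bigr|\le\sup_\xi|h(\xi)|\cdot|\breve\mu|_\wedge(\Xi)$ (via Proposition~\ref{pr:IntVectMeasEst}) is stated.
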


\begin{remark}
The finiteness of $ \beta_2 $ guarantees that the tensor-valued measure $ \breve \mu $ is well-defined.
\end{remark}

\noindent
The proof of Proposition~\ref{pr:size2zero} is deferred to Subsection~\ref{ssc:sizeproxzero}.

\medskip
\noindent
Now we are about to formulate our second main result, Theorem~\ref{th:CLT:zero}.
Before the statement, we need some more quantities,
recalling that $ |\cdot|_\wedge $ denotes the projective norm -- see Subsection~\ref{ssc:Ten}:
\begin{align}
\label{eq:betazero-3}
 \breve \beta_3 &:= \int_\Xi \beta_1^{(\xi)} \, |\breve \mu|_\wedge(\dl \xi) \, ,
\\
\label{eq:betazero-23}
 \breve \beta_{23}(a, b) &:= \int_\Xi \min \bigl\{ a, \, b \, \beta_1^{(\xi)} \bigr\} \, |\breve \mu|_\wedge(\dl \xi) \, ,
\\
\label{eq:betazero-234}
 \breve \beta_{234}(a, b, c) &:= \int_\Xi \min \Bigl\{
   a, \, b \, \beta_1^{(\xi)} + c \sqrt{\beta_2^{(\xi)}}
 \Bigr\} \, |\breve \mu|_\wedge(\dl \xi) \, .
\end{align}

\begin{theorem}
\label{th:CLT:zero}
Under Assumptions~\ref{ass:St}, \ref{ass:zero} and \ref{ass:prox}, the following inequalities hold true:
\begin{align}
 \label{eq:CLT3Xi}
 \bigl| \Ee \bigl[ f(W) \bigr] - \Normal f \bigr|
 &\le
 \frac{\breve \beta_3}{3} \, M_3(f) \, ,
\\
 \label{eq:CLT2Xi}
 \bigl| \Ee \bigl[ f(W) \bigr] - \Normal f \bigr|
 &\le
 \breve \beta_{23} \left(
   1, \, \frac{\sqrt{2 \pi}}{4}
 \right) M_2(f) \, ,
\\
 \label{eq:CLT1Xi}
 \bigl| \Ee \bigl[ f(W) \bigr] - \Normal f \bigr|
 &\le
 \breve \beta_{234} \bigl( 1.8, \, 3.58 + 0.55 \log d, \, 3.5 \bigr) \, M_1(f) \, .
\end{align}
More precisely, for each of the inequalities, if the underlying $ M_r(f) $ in the right hand side is finite,
then $ \Ee \bigl| f(W) \bigr| $ and $ \Normal |f| $ are also finite and the inequality
holds true (under the convention $ \infty \cdot 0 = 0 $).
\end{theorem}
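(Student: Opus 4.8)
The plan is to use Stein's method with the classical multivariate Stein equation. For a test function $f$ with $\Normal f$ finite, let $h := f - \Normal f$ and solve the equation
\begin{equation*}
 \Delta g(w) - \scalp{w}{\nabla g(w)} = h(w)
\end{equation*}
by the standard generator representation $g(w) = -\int_0^\infty \Ee\bigl[ h(e^{-t} w + \sqrt{1 - e^{-2t}}\, Z) \bigr] \,\dl t$. Writing $\varphi := \nabla g \colon \RR^d \to \RR^d$, this gives
\begin{equation*}
 \Ee\bigl[ f(W) \bigr] - \Normal f
 =
 \Ee\bigl[ \scalp{\nabla \varphi(W)}{\Id_d} \bigr] - \Ee\bigl[ \scalp{W}{\varphi(W)} \bigr]
 =
 \Ee\bigl[ \operatorname{tr} \nabla \varphi(W) \bigr] - \Ee\bigl[ \scalp{W}{\varphi(W)} \bigr].
\end{equation*}
Now apply Assumption~\ref{ass:zeroalt} (equivalent to \ref{ass:zero} by Proposition~\ref{pr:zeroaltequiv}) to $F = \varphi$: the second term equals $\int_\Xi \Scalp[big]{\Ee_\xi[\nabla \varphi(V_\xi)]}{\breve\mu(\dl\xi)}$. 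Since $\breve\mu(\Xi) = \Id_d$ by Remark~\ref{rk:zerovar} together with \ref{ass:St}, the first term equals $\int_\Xi \Scalp[big]{\Ee[\nabla\varphi(W)]}{\breve\mu(\dl\xi)}$ (the trace being the pairing with $\Id_d = \breve\mu(\Xi)$). Hence
\begin{equation*}
 \Ee\bigl[ f(W) \bigr] - \Normal f
 =
 \int_\Xi \Scalp[big]{\Ee\bigl[ \nabla\varphi(W) \bigr] - \Ee_\xi\bigl[ \nabla\varphi(V_\xi) \bigr]}{\breve\mu(\dl\xi)}.
\end{equation*}
This is the point where Assumption~\ref{ass:prox} enters: applied componentwise to the continuously differentiable entries of $\varphi$ (each with bounded derivative, provided the regularity bounds on $g$ below hold), \eqref{eq:prox} turns $\Ee[\nabla\varphi(W)] - \Ee_\xi[\nabla\varphi(V_\xi)]$ into $\int_\Xi \Ee_\eta[\nabla^2\varphi(V_\eta)] \cdot \nu_\xi(\dl\eta) = \int_\Xi \Ee_\eta[\nabla^3 g(V_\eta)]\,\nu_\xi(\dl\eta)$. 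So after one application of \ref{ass:zero} and one of \ref{ass:prox} we are left with a double integral involving the \emph{third} derivative of the Stein solution $g$, paired against $\nu_\xi$ and $\breve\mu$.

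From here the three bounds diverge according to how we control $\nabla^3 g$. For \eqref{eq:CLT3Xi}: the standard bound $M_3(g) \le \tfrac13 M_3(f)$ (or the analogous sup bound on $|\nabla^3 g|_\vee$) lets us estimate the double integral by $M_3(f)$ times $\int_\Xi |\nu_\xi|(\Xi)\, |\breve\mu|_\wedge(\dl\xi) = \breve\beta_3$, up to the factor $1/3$; the projective norm $|\cdot|_\wedge$ is exactly the dual needed to pair a $2$-tensor measure against an injective-norm bound on $\nabla^3 g$ after contracting one slot with $\nu_\xi$. For \eqref{eq:CLT2Xi}: instead of using $\nabla^3 g$ directly we integrate by parts once more in the inner $\eta$-integral, or equivalently use that $\nabla^2 g$ has a Lipschitz-type modulus controlled by $M_2(f)$ on one scale and by $|\nabla^2 g|$-bounds on another, producing the $\min\{1, \tfrac{\sqrt{2\pi}}{4}\beta_1^{(\xi)}\}$ truncation inside $\breve\beta_{23}$; the constant $\sqrt{2\pi}/4$ is the known sharp bound on $M_2$ of the Stein solution relative to $M_1$ of a smoothed test function, or on $\Ee|\nabla^2 g(Z + \cdot)|$ type quantities. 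For \eqref{eq:CLT1Xi}, the genuinely new bound: as the introduction stresses, $\nabla^3 g$ cannot be bounded in terms of $M_1(f)$, so we must route through the estimate~\eqref{eq:circum} — bound the relevant combination by a mix of $M_2$ and $M_4$ of $g$, both of which behave well against $M_1(f)$. This is what produces the three-argument truncation $\min\{1.8,\ (3.58 + 0.55\log d)\beta_1^{(\xi)} + 3.5\sqrt{\beta_2^{(\xi)}}\}$: the constant $1.8$ is the trivial bound from $M_1$ (Wasserstein distance bounded by diameter-type quantity), the $\log d$ term comes from the dimensional dependence of the smoothing estimate on $\nabla^2 g$, and the $\sqrt{\beta_2^{(\xi)}}$ term from controlling a $\nabla^4 g$ contribution via a second-moment/Cauchy--Schwarz argument over the $\nu$-integral.

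The main obstacle is the last bound, \eqref{eq:CLT1Xi}, and specifically making the circumvention of $\nabla^3 g$ rigorous. One must (i) regularize: replace a Lipschitz $f$ by a mollification $f_\eps = f * (\text{Gaussian of variance } \eps^2)$ so that $g$ becomes smooth enough to have a well-defined $\nabla^4 g$, apply the smooth estimates, and then let $\eps \to 0$ while tracking how the $\nabla^2 g$ and $\nabla^4 g$ bounds blow up in $\eps$ and get absorbed by the two scales in the $\min$; (ii) justify the telescoping identity~\eqref{eq:circum} that trades $\nabla^3 g$ against $\nabla^2 g + \nabla^4 g$ — presumably an integration-by-parts or a clever algebraic rearrangement exploiting that $g$ solves the Stein PDE, so that the ``bad'' third-derivative term is expressible through the equation itself; and (iii) extract the precise dimensional constant $0.55\log d$, which requires a sharp bound on something like $\Ee\bigl|\nabla^2 g(Z)\bigr|_{\text{op}}$ or an $L^1$-norm of a second-derivative Gaussian kernel, where the $\log d$ inevitably appears. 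The finiteness claims ($\Ee|f(W)| < \infty$, $\Normal|f| < \infty$) follow from Remark~\ref{rk:MBd} together with $\Ee|W|^r < \infty$, which in turn follows from \ref{ass:St} for $r \le 2$ and must be bootstrapped from the hypotheses for $r = 3$; and the convention $\infty \cdot 0 = 0$ handles the degenerate case where $M_r(f) = \infty$ but the relevant $\breve\beta$ vanishes.
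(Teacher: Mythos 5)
Your overall framework is right — Stein's method, the $(Z')$ identity converting $\Ee[\scalp{W}{\varphi(W)}]$ into an integral against $\breve\mu$, then $(Px)$ to pick up another derivative and a $\nu_\xi$ integral, with the projective/injective duality handling the tensor pairings. For \eqref{eq:CLT3Xi} and \eqref{eq:CLT2Xi} this essentially matches what the paper does, although the paper does not work with a single Stein solution $g$: it uses the Slepian interpolant $\Slep_\alpha f = \Normal_{\sin\alpha} f(\,\cdot\cos\alpha)$ and the identity $\Ee \Slep_\eps f(W) - \Normal f = -\int_\eps^{\pi/2}\Ee[\Stein\Slep_\alpha f(W)]\tan\alpha\,\dl\alpha$, bounding $\Ee[\Stein\Slep_\alpha f(W)]$ \emph{at each} $\alpha$ (Lemma~\ref{lm:zero:EStein}) and then integrating. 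Applying $\min$-bounds scale by scale in $\alpha$ is what produces the truncated $\breve\beta_{23}$, $\breve\beta_{234}$ quantities; folding everything into one fixed $g$ would obscure this.

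For \eqref{eq:CLT1Xi} there is a genuine gap. You correctly identify that $\nabla^3 g$ (equivalently $M_3(\Slep_\alpha f)$, which blows up like $1/\sin^2\alpha$) cannot be bounded by $M_1(f)$ and must be circumvented, and you correctly guess that a mix of second- and fourth-order information is what survives. But your proposed mechanism — ``a telescoping identity \eqref{eq:circum}\ldots an integration-by-parts or a clever algebraic rearrangement exploiting that $g$ solves the Stein PDE'' — is not what happens and would not work: \eqref{eq:circum} is just the elementary one-variable estimate $\int_0^{\pi/2}\min\{A, B/\sin^2\alpha\}\cos\alpha\,\dl\alpha\le 2\sqrt{AB}$, applied \emph{after} the hard work is already done. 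The actual key idea in the paper, which your proposal never touches, is a self-improving (bootstrap) argument. One defines $\delta := \sup\{|\Ee f(W)-\Normal f|/M_1(f): 0<M_1(f)<\infty\}$, shows it is finite a priori, and then uses it \emph{inside} the estimate: the quantity one needs to control is not the sup-norm $M_3(\Slep_\alpha f)$ but the single expectation $|\Ee[\nabla^3\Slep_\alpha f(W)]|_\vee$, and this is bounded by $|\Normal\nabla^3\Slep_\alpha f|_\vee + \delta\,M_4(\Slep_\alpha f)$ (applying the definition of $\delta$ to the auxiliary Lipschitz test function $w\mapsto\scalp{\nabla^3\Slep_\alpha f(w)}{u^{\otimes 3}}$). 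The Gaussian term $|\Normal\nabla^3\Slep_\alpha f|_\vee$ is well behaved (bounded by $c_2 M_1(f)\cos^3\alpha$ — no blow-up in $\alpha$), and the $\delta M_4$ term blows up like $\delta/\sin^3\alpha$ but carries the small prefactor $\delta$. Integrating over $\alpha$ from a smoothing cutoff $\eps$, choosing $\eps \asymp \delta/\sqrt d$, and resolving the resulting inequality $\delta \le \delta/9 + \breve\beta_{234}(\cdots)$ gives the result; this is also exactly where $\log d$ enters (from $\log(1/\sin(\eps/2))$ against the smoothing loss $2\sqrt d M_1(f)\sin(\eps/2)$). Without this bootstrap, a naive bound on the fourth-derivative contribution would force $M_4(g)\lesssim M_1(f)\cdot(\text{divergent in }\eps)$ and the estimate would not close — you would be back to the suboptimal $\log n$ rate. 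Your step (ii) is therefore not just a technicality to be filled in; it is the central new idea of the theorem, and the proposal as written does not contain it.
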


\noindent
We defer the proof to Subsection~\ref{ssc:Prf}.

\section{Application to sums of independent random vectors}
\label{sc:Ind}

Let $ \scrpt I $ be a countable set and let $ X_i $, $ i \in \scrpt I $, be independent
$ \RR^d $-valued random vectors with $ \Ee X_i = 0 $ for all $ i \in \scrpt I $.
Suppose that $ \sum_{i \in \scrpt I} \Ee |X_i|^2 < \infty $.
Then the sum $ W := \sum_{i \in \scrpt I} X_i $ exists almost surely.
Suppose that $ \Var(W) = \Id_d $.

\subsection{Construction of measure $ \breve \mu $ satisfying \ref{ass:zeroalt}}
\label{ssc:Ind:zero}

Let $ W_i := W - X_i $ and take a countinuously differentiable map $ F \Colon \RR^d \to \RR^d $
with bounded derivative. Using independence and applying Taylor's expansion, write
\begin{align*}
 \Ee \bigl[ \scalp{F(W)}{W} \bigr]
 &=
 \sum_{i \in \scrpt I} \Ee \bigl[ \scalp{F(W_i + X_i) - F(X_i)}{X_i} \bigr] \\
 &=
 \sum_{i \in \scrpt I} \int_0^1 \Ee \bigl[ \scalp{\nabla F(W_i + t X_i)}{X_i \otimes X_i} \bigr] \,\dl t \, .
\end{align*}
Now let $ \Xi := \scrpt I \times \RR^d $ and let $ \scrpt X $ be the product
(in terms of $ \sigma $-algebras) of the power set of $ \scrpt I $ and the Borel
$ \sigma $-algebra on $ \RR^d $. Put $ V_{i,x} := W_i + x $. Then we may write
\[
 \Ee \bigl[ \scalp{F(W)}{W} \bigr]
 =
 \sum_{i \in \scrpt I} \int_0^1 \int_{\RR^d} \Scalp{\Ee \bigl[ \nabla F(V_{i,tx}) \bigr]}{x \otimes x}
 \, \scrpt L(X_i)(\dl x) \,\dl t
 \, .
\]
Letting
\[
 \breve \mu(B)
 :=
 \sum_{i \in \scrpt I} \int_0^1 \int_{\RR^d} \One \bigl( (i, tx) \in B \bigr) \, (x \otimes x)
 \, \scrpt L(X_i)(\dl x) \,\dl t \, ,
\]
a standard argument shows that
\[
 \int_\Xi h \,\dl \breve \mu
 =
 \sum_{i \in \scrpt I} \int_0^1 \int_{\RR^d} h(i, t x) \, (x \otimes x)
 \, \scrpt L(X_i)(\dl x) \,\dl t
 =
 \sum_{i \in \scrpt I} \int_0^1 \Ee \bigl[ h(i, t X_i) \, (X_i \otimes X_i) \bigr] \,\dl t
\]
for all bounded measurable functions $ h \Colon \Xi \to \RR $. This proves \ref{ass:zeroalt}.

\subsection{Construction of measures $ \nu_\xi $ satisfying \ref{ass:prox}}
\label{ssc:Ind:prox}

We use the construction from Example~\ref{ex:NewtLeib}. Observing that the conditional
distribution of $ (1 - t) V_{i,x} + t W = W_i + (1 - t) x + t X_i $ given
$ W - V_{i,x} = X_i - x = y $ agrees with the (unconditional) distribution of
$ W_i + x + t y $, we may set $ \psi(i, x, t, y) := (i, x + t y) $. Therefore,
there exist $ \RR^d $-valued vector measures $ \nu_{i,x} $, such that
\[
 \int_\Xi h \,\dl \nu_{i,x}
 =
 \int_0^1 \Ee \Bigl[ h \bigl( i, x + t(W - V_{i,x}) \bigr) (W - V_{i,x}) \Bigr] \,\dl t
 =
 \int_0^1 \Ee \Bigl[ h \bigl( i, (1 - t) x + t X_i \bigr) (X_i - x) \Bigr] \,\dl t
\]
for all bounded measurable functions $ h \Colon \Xi \to \RR $, and these measures satisfy \ref{ass:prox}.

\subsection{Estimation of $ \beta_3 $, $ \beta_{23} $ and $ \beta_{234} $}

First, observe that
\begin{align*}
 \int h \,\dl |\breve \mu|_\wedge
 &\le
 \sum_{i \in \scrpt I} \int_0^1 \Ee \bigl[ |X_i|^2 h(i, t X_i) \bigr] \,\dl t \, ,
\\
 \int h \,\dl |\nu_{i,x}|
 &\le
 \int_0^1 \Ee \Bigl[ \bigl( |X_i| + |x| \bigr) \, h \bigl( i, (1 - t) x + t X_i \bigr) \Bigr] \,\dl t
 \end{align*}
for all measurable functions $ h \Colon \Xi \to [0, \infty] $. Recalling \eqref{eq:betasize1-2},
\eqref{eq:betasize2-3} and \eqref{eq:betazero-3}--\eqref{eq:betazero-234}, we estimate
\begin{align*}
 \beta_1^{(i,x)}
 &\le
 \Ee |X_i| + |x| \, ,
\\
 \breve \beta_3
 &\le
 \sum_{i \in \scrpt I} \int_0^1 \Ee \bigl[ |X_i|^2 \beta_1^{(i, t X_i)} \bigr] \,\dl t
 \le
 \sum_{i \in \scrpt I} \int_0^1 \Ee \Bigl[ |X_i|^2 \bigl( \Ee |X_i| + t |X_i| \bigr) \Bigr] \,\dl t
 \le
 \frac{3}{2} \sum_{i \in \scrpt I} \Ee |X_i|^3
 \, ,
\end{align*}
with the last inequality being due to Jensen's inequality. Similarly,
\[
 \breve \beta_{23}(a, b)
 \le
 \sum_{i \in \scrpt I} \int_0^1 \Ee \Bigl[ |X_i|^2 \min \bigl\{ a, b \, \beta_1^{(i, t X_i)} \bigr\} \Bigr] \,\dl t
 \le
 \sum_{i \in \scrpt I}
   \Ee \biggl[ |X_i|^2 \int_0^1 \min \bigl\{ a, b \bigl( \Ee |X_i| + t |X_i| \bigr) \bigr\} \,\dl t \biggr]
 \, .
\]
Applying the inequality $ \int_0^1 \min \bigl\{ f(t), g(t) \bigr\} \,\dl t
\le \min \bigl\{ \int_0^1 f(t) \,\dl t, \int_0^1 g(t) \,\dl t \bigr\} $ and integrating,
we find that
\begin{equation}
\label{eq:Ind:beta23itm}
 \breve \beta_{23}(a, b)
 \le
 \sum_{i \in \scrpt I} \Ee \Bigl[ |X_i|^2 \min \bigl\{ a, b \, \Ee |X_i| + \tfrac{1}{2} b \, |X_i| \bigr\} \Bigr]
 \, .
\end{equation}
Finally, to bound $ \breve \beta_{234}(a, b, c) $, we first estimate
\begin{align*}
 \beta_2^{(i,x)}
 &\le
 \int_0^1 \Ee \Bigl[ \bigl( |X_i| + |x| \bigr) \beta_1^{(i, (1-t) x + t X_i)} \Bigr] \,\dl t \\
 &\le
 \int_0^1 \Ee \Bigl[ \bigl( |X_i| + |x| \bigr) \bigl( \Ee |X_i| + (1 - t) |x| + t |X_i| \bigr) \Bigr] \,\dl t \\
 &=
 \tfrac{3}{2} \Ee |X_i|^2 + 2 |x| \Ee |X_i| + \tfrac{1}{2} |x|^2 \\
 &\le
 \tfrac{3}{2} \Ee |X_i|^2 + 2 |x| \sqrt{\Ee |X_i|^2} + \tfrac{1}{2} |x|^2 \\
 &=
 \frac{1}{2} \Bigl( 3 \sqrt{\Ee |X_i|^2} + |x| \Bigr) \Bigl( \sqrt{\Ee |X_i|^2} + |x| \Bigr)
 \, .
\end{align*}
An application of the inequality between the arithmetic and the geometric mean yields
\[
 \sqrt{\beta_2^{(i,x)}}
 \le
 \tfrac{5}{4} \sqrt{\Ee |X_i|^2} + \tfrac{3}{4} \, |x|
 \, ,
\]
leading to the bound
\begin{equation}
\label{eq:Ind:beta234itm}
\begin{split}
 \breve \beta_{234}(a, b, c)
 &\le
 \sum_{i \in \scrpt I} \int_0^1 \Ee \Bigl[ |X_i|^2 \min \Bigl\{ 
   a, b \, \beta_1^{(i, t X_i)} + c \sqrt{\beta_2^{(i, t X_i)}}
 \Bigr\} \Bigr] \,\dl t \\
 &\le
 \sum_{i \in \scrpt I} \Ee \biggl[ |X_i|^2 \int_0^1 \min \Bigl\{ 
   a, \bigl( b + \tfrac{5}{4} \, c \bigr) \sqrt{\Ee |X_i|^2} + \bigl( b + \tfrac{3}{4} \, c \bigr) t \, |X_i|
 \Bigr\} \,\dl t \biggr] \\
 &\le
 \sum_{i \in \scrpt I} \Ee \biggl[ |X_i|^2 \min \Bigl\{ 
   a, \bigl( b + \tfrac{5}{4} \, c \bigr) \sqrt{\Ee |X_i|^2}
         +
      \bigl( \tfrac{1}{2} \, b + \tfrac{3}{8} \, c \bigr) |X_i|
 \Bigr\} \,\dl t \biggr]
 \, .
\end{split}
\end{equation}
For $ a, b \ge 0 $, consider functions $ h_{a,b} \Colon [0, \infty) \to [0, \infty) $
defined by
$ h_{a,b}(u) := b \, u^{3/2} $ for $ u \le \frac{a^2}{b^2} $ and
$ h_{a,b}(u) := \tfrac{3}{2} a u - \frac{a^3}{2 b^2} $ for $ u \ge \frac{a^2}{b^2} $.
Observe that $ h_{a,b} $ are convex and $ \min \bigl\{ a u, b u^{3/2} \bigr\}
\le h_{a,b}(u) \le \min \bigl\{ \tfrac{3}{2} a u, b u^{3/2} \bigr\}  $ for all $ u \ge 0 $.
Therefore, for any non-negative random variable $ X $, we have
\[
 \min \bigl\{ a \Ee X^2, b \, (\Ee X^2)^{3/2} \bigr\}
 \le
 h_{a,b} (\Ee X^2)
 \le
 \Ee \bigl[ h_{a,b}(X^2) \bigr] 
 \le
 \Ee \bigl[ \min \bigl\{ \tfrac{3}{2} a X^2, b X^3 \bigr\} \bigr]
 \, . 
\]
Further estimation of the right hand sides of \eqref{eq:Ind:beta23itm} and \eqref{eq:Ind:beta234itm}
combined with the preceding observation leads to the following Lindeberg type bounds
\begin{align*}
 \breve \beta_{23}(a, b)
 &\le
 \sum_{i \in \scrpt I} \Ee \Bigl[ |X_i|^2 \min \bigl\{ \tfrac{3}{2} \, a, b \, |X_i| \bigr\} \Bigr] +
 \sum_{i \in \scrpt I} \Ee \Bigl[ |X_i|^2 \min \bigl\{ a, \tfrac{1}{2} b \, |X_i| \bigr\} \Bigr] \\
 &\le
 \sum_{i \in \scrpt I} \Ee \Bigl[ |X_i|^2 \min \bigl\{ \tfrac{5}{2} \, a, \tfrac{3}{2} b \, |X_i| \bigr\} \Bigr] \, ,
\\
 \breve \beta_{234}(a, b, c)
 &\le
 \sum_{i \in \scrpt I} \Ee \Bigl[ |X_i|^2 \min \bigl\{
   \tfrac{3}{2} \, a, \bigl( b + \tfrac{5}{4} \, c \bigr) |X_i|
 \bigr\} \Bigr]
   +
 \sum_{i \in \scrpt I} \Ee \Bigl[ |X_i|^2 \min \bigl\{
   a, \bigl( \tfrac{1}{2} \, b + \tfrac{3}{8} \, c \bigr) |X_i|
 \bigr\} \bigr] \\
 &\le
 \sum_{i \in \scrpt I} \Ee \Bigl[ |X_i|^2 \min \bigl\{
   \tfrac{5}{2} \, a, \bigl( \tfrac{3}{2} \, b + \tfrac{13}{8} \, c \bigr) |X_i|
 \bigr\} \Bigr]
\end{align*}
and Theorem~\ref{th:CLT:zero} yields
\begin{align}
 \label{eq:Ind:CLT3}
 \bigl| \Ee \bigl[ f(W) \bigr] - \Normal f \bigr|
 &\le
 \frac{M_3(f)}{2} \sum_{i \in \scrpt I} \Ee |X_i|^3 \, ,
\\
 \label{eq:Ind:CLT2}
 \bigl| \Ee \bigl[ f(W) \bigr] - \Normal f \bigr|
 &\le
 M_2(f) \sum_{i \in \scrpt I} \Ee \Bigl[ |X_i|^2 \min \bigl\{ 2.5, \> 0.94 \, |X_i| \bigr\} \Bigr] \, ,
%
%
%
\\
 \label{eq:Ind:CLT1}
 \bigl| \Ee \bigl[ f(W) \bigr] - \Normal f \bigr|
 &\le
 M_1(f) \sum_{i \in \scrpt I} \Ee \Bigl[ |X_i|^2 \min \bigl\{
    4.5, \> \bigl( 11.1 + 0.83 \log d \, \bigr) \, |X_i| \bigr\} \Bigr] \, .
%
%
%
\end{align}

\section{Proofs}
\label{sc:Prf}

\subsection{Assumptions~\ref{ass:size}, \ref{ass:prox} and \ref{ass:zero}}
\label{ssc:sizeproxzero}

Here, we prove Proposition~\ref{pr:size2zero} and derive stronger formulations
of Assumptions~\ref{ass:size} and \ref{ass:zeroalt}, which will be necessary
in the proofs of Theorems~\ref{th:CLT:size} and \ref{th:CLT:zero}.

\begin{proposition}
\label{pr:size:abs}
Assume \ref{ass:size} and take a measurable function $ f \Colon \RR^d \to \RR $ with
\newline
$ \int_{\Xi} \Ee_\xi \bigl| f(V_\xi) \bigr| \, |\mu|(\dl \xi) < \infty $.
Then we have $ \Ee \bigl| f(W) W \bigr| < \infty $ and \eqref{eq:size} remains true.
\end{proposition}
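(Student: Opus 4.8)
The plan is to deduce the statement from Assumption~\ref{ass:size}, which a priori only covers bounded test functions, by truncation combined with monotone and dominated convergence. The one point that genuinely needs care is that \eqref{eq:size} controls the vector $\Ee\bigl[f(W)\,W\bigr]$, in which the coordinates of $W$ may cancel, whereas we want a bound on $\Ee\bigl[|f(W)|\,|W|\bigr]$; so before truncating we must split according to the sign of each coordinate of $W$.

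Set $C := \int_\Xi \Ee_\xi|f(V_\xi)|\,|\mu|(\dl\xi)$, which is finite by hypothesis. First I would prove $\Ee\bigl|f(W)\,W\bigr| < \infty$. Fix $j \in \{1,\dots,d\}$ and $n \in \NN$ and apply \eqref{eq:size} to the bounded measurable function $w \mapsto \min\{|f(w)|,n\}\,\operatorname{sgn}(w_j)$; taking the $j$-th coordinate of the resulting identity gives
\[
 \Ee\bigl[\min\{|f(W)|,n\}\,|W_j|\bigr]
 =
 \int_\Xi \Ee_\xi\bigl[\min\{|f(V_\xi)|,n\}\,\operatorname{sgn}((V_\xi)_j)\bigr]\,\mu_j(\dl\xi)\,,
\]
and the right-hand side is bounded in absolute value by $\int_\Xi \Ee_\xi|f(V_\xi)|\,|\mu_j|(\dl\xi) \le C$, using $|\mu_j| \le |\mu|$. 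Summing over $j$ and using $|W| \le \sum_{j} |W_j|$ yields $\Ee\bigl[\min\{|f(W)|,n\}\,|W|\bigr] \le dC$ for every $n$, and letting $n \to \infty$ by monotone convergence gives $\Ee\bigl[|f(W)|\,|W|\bigr] \le dC < \infty$.

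Next I would establish that \eqref{eq:size} still holds for $f$. Assume first $f \ge 0$ and put $f_n := \min\{f,n\}$; since \eqref{eq:size} holds for the bounded function $f_n$, it suffices to pass to the limit on both sides. On the left, $f_n(W)\,W \to f(W)\,W$ almost surely and $|f_n(W)\,W| \le |f(W)|\,|W|$, which is integrable by the previous step, so dominated convergence applies. On the right, $\Ee_\xi[f_n(V_\xi)] \uparrow \Ee_\xi[f(V_\xi)]$ for each $\xi$ by monotone convergence under $\Pp_\xi$, while $0 \le \Ee_\xi[f_n(V_\xi)] \le \Ee_\xi|f(V_\xi)| \in L^1(|\mu|)$, so the dominated convergence theorem for the $\RR^d$-valued measure $\mu$ applies. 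Hence \eqref{eq:size} holds for non-negative $f$. For a general $f$ satisfying the hypothesis, both $f^+$ and $f^-$ satisfy it as well (since $|f^\pm| \le |f|$); applying the non-negative case to each and subtracting -- which is legitimate because all the expectations and integrals involved are now absolutely convergent -- gives \eqref{eq:size} for $f$.

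The main obstacle is the finiteness claim: because \eqref{eq:size} is a vector identity, substituting the truncated $|f|$ directly would only bound $\bigl|\Ee[f(W)\,W]\bigr|$, in which cancellation across the coordinates of $W$ is possible, rather than the quantity $\Ee\bigl[|f(W)|\,|W|\bigr]$ that we actually need. Multiplying the truncated $|f|$ by $\operatorname{sgn}(w_j)$ and reading off the $j$-th coordinate is precisely what removes this cancellation; once $\Ee|f(W)\,W| < \infty$ is in hand, the rest is a routine application of the convergence theorems.
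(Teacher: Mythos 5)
Your proof is correct and follows essentially the same route as the paper's: truncate, neutralize the sign cancellation by multiplying by a sign factor before applying \eqref{eq:size} to bounded functions, bound by the hypothesis, and then pass to the limit by convergence theorems. The only cosmetic differences are that the paper applies the sign correction against a general direction $u\in\RR^d$ and truncates $f$ by $f\One(|f|\le n)$ whereas you use the coordinate directions and truncate $|f|$ by $\min\{|f|,n\}$; your variant in fact streamlines the sign bookkeeping slightly.
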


\begin{proof}
For each $ n \in \NN $, define $ f_n(w) := f(w) \One(|f(w)| \le n) $.
Next, for each $ n \in \NN $ and each $ u \in \RR^d $, define
$ f_{n,u}(w) := f_n(w) $ if $ \scalp{w}{u} \ge 0 $ and $ f_{n,u}(w) := - f_n(w) $ if $ \scalp{w}{u} < 0 $.
Observe that the functions $ f_n $ and $ f_{n,u} $ are measurable and bounded, so that \eqref{eq:size}
applies with $ f_n $ or $ f_{n,u} $ in place of $ f $. As a result, we have
\begin{align*}
 \Ee \bigl| f_n(W) \scalp{W}{u} \bigr|
 &=
 \Ee \bigl[ f_{n,u}(W) \scalp{W}{u} \bigr] \\
 &=
 \Scalp[bigg]{\int_{\Xi} \Ee_\xi \bigl[ f_{n,u}(V_\xi) \bigr] \, \mu(\dl \xi)}{u} \\
 &\le
 \int_{\Xi} \Ee_\xi \bigl| f(V_\xi) \bigr| \, |\mu|(\dl \xi) \\
 &<
 \infty
 \, .
\end{align*}
Noting that the functions $ f_n $ converge pointwise to $ f $, and applying Fatou's lemma, we
find that $ \Ee \bigl| f(W) \scalp{W}{u} \bigr| < \infty $ for all $ u \in \RR^d $.
Therefore, $ \Ee \bigl| f(W) W \bigr| < \infty $. Now we can apply the dominated convergence
theorem to the counterparts of \eqref{eq:size} with $ f_n $ in place of $ f $, which
implies that \eqref{eq:size} remains true.
\end{proof}

\begin{lemma}
\label{lm:proxE}
Under \ref{ass:prox}, we have $ \Ee |V_\xi| \le \Ee |W| + |\nu_\xi|(\Xi) $
for all $ \xi \in \Xi $.
\end{lemma}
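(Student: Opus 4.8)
The plan is to test the identity~\eqref{eq:prox} against smooth approximations of the function $ w \mapsto |w| $. For $ \eps > 0 $, set $ f_\eps(w) := \sqrt{|w|^2 + \eps^2} - \eps $. Then $ f_\eps $ is infinitely differentiable, with $ \nabla f_\eps(w) = w / \sqrt{|w|^2 + \eps^2} $, so that $ |\nabla f_\eps(w)| \le 1 $ for all $ w $; moreover $ 0 \le f_\eps(w) \le |w| $ and $ f_\eps(w) \uparrow |w| $ as $ \eps \downarrow 0 $. In particular $ f_\eps $ is continuously differentiable with bounded derivative, so Assumption~\ref{ass:prox} applies to it.

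Plugging $ f = f_\eps $ into~\eqref{eq:prox} and rearranging gives
\[
 \Ee_\xi \bigl[ f_\eps(V_\xi) \bigr]
 =
 \Ee \bigl[ f_\eps(W) \bigr] - \int_\Xi \Scalp[big]{\Ee_\eta \bigl[ \nabla f_\eps(V_\eta) \bigr]}{\nu_\xi(\dl \eta)}
 \, .
\]
For the first term, $ f_\eps(W) \le |W| $ yields $ \Ee[f_\eps(W)] \le \Ee|W| $. For the integral, note that the injective norm of the $ 1 $-tensor $ \Ee_\eta[\nabla f_\eps(V_\eta)] $ is just its Euclidean norm, which is at most $ \Ee_\eta |\nabla f_\eps(V_\eta)| \le 1 $; hence, by Definition~\ref{df:IntTen}, the integral is bounded in absolute value by $ |\nu_\xi|(\Xi) $. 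Combining the two bounds, $ \Ee_\xi[f_\eps(V_\xi)] \le \Ee|W| + |\nu_\xi|(\Xi) $ for every $ \eps > 0 $.

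Finally, let $ \eps \downarrow 0 $. Since $ 0 \le f_\eps(V_\xi) \uparrow |V_\xi| $ pointwise, the monotone convergence theorem gives $ \Ee_\xi[f_\eps(V_\xi)] \to \Ee_\xi|V_\xi| $, and the displayed bound passes to the limit, yielding $ \Ee_\xi|V_\xi| \le \Ee|W| + |\nu_\xi|(\Xi) $, as claimed. There is no genuine difficulty here: the only point needing care is that $ |\cdot| $ fails to be differentiable at the origin, which is precisely what the smoothing $ f_\eps $ circumvents; everything else is a routine estimate of a tensor-valued integral against the vector measure $ \nu_\xi $.
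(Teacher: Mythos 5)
Your proof is correct and takes essentially the same approach as the paper: both smooth $|\cdot|$ by $w \mapsto \sqrt{|w|^2 + \eps^2}$ (you subtract an extra $\eps$), apply \eqref{eq:prox}, bound the integral term by $|\nu_\xi|(\Xi)$ using $|\nabla f_\eps| \le 1$, and let $\eps \downarrow 0$. The only cosmetic difference is that you invoke monotone convergence where the paper uses the squeeze $|w| \le \sqrt{|w|^2+\eps^2} \le |w|+\eps$ directly.
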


\begin{proof}
Take $ \eps > 0 $, let $ f(w) := \sqrt{\eps^2 + |w|^2} $ and
compute $ \nabla f(w) = \frac{w}{\sqrt{\eps^2 + |w|^2}} $. Clearly, $ |\nabla f(w)| \le 1 $.
By Assumption~\ref{ass:prox}, we have
\begin{align*}
 \Ee_\xi |V_\xi|
 \le
 \Ee_\xi \bigl[ f(V_\xi) \bigr]
 &=
 \Ee \bigl[ f(W) \bigr]
   -
 \int_\Xi \Scalp[big]%
   {\Ee_\xi \bigl[ \nabla f(V_\xi) \bigr]}{\nu_\xi(\dl \eta)} \\
 &\le
 \eps + \Ee |W| + |\nu_\xi|(\Xi) \, .
\end{align*}
Letting $ \eps $ to zero, we obtain the desired inequality.
\end{proof}

\begin{lemma}
\label{lm:proxBd}
Assume \ref{ass:prox}, let $ D_\xi f := \Ee \bigl[ f(W) \bigr] - \Ee_\xi \bigl[ f(V_\xi) \bigr] $,
and recall \eqref{eq:betasize1-2} and \eqref{eq:betasize2-3}. Take $ \xi \in \Xi $ and a function
$ f \Colon \RR^d \to \RR $. If either $ f $ is measurable and bounded or $ M_1(f) < \infty $, then
\begin{equation}
\label{eq:proxBd0}
 |D_\xi f| \le 2 \, M_0(f) \, .
\end{equation}
Next, if $ f $ is continuously differentiable with $ M_0(f) < \infty $ or $ M_1(f) < \infty $, then
\begin{equation}
\label{eq:proxBd1}
 |D_\xi f| \le M_1(f) \, \beta_1^{(\xi)} \, .
\end{equation}
Finally, if $ f $ is twice continuously differentiable with $ M_1(f) < \infty $, then
\begin{equation}
\label{eq:proxBd2}
 |D_\xi f| \le \bigl| \Ee \bigl[ \nabla f(W) \bigr] \bigr| \, \beta_1^{(\xi)} + M_2(f) \, \beta_2^{(\xi)} \, .
\end{equation}
All the bounds apply under the convention $ 0 \cdot \infty = \infty \cdot 0 = 0 $.
\end{lemma}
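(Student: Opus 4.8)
The plan is to prove the three bounds in increasing difficulty, each time exploiting the representation \eqref{eq:prox} of $D_\xi f$ and the definitions \eqref{eq:betasize1-2}--\eqref{eq:betasize2-3} of $\beta_1^{(\xi)}$, $\beta_2^{(\xi)}$. For \eqref{eq:proxBd0}, I would simply write $|D_\xi f| = |\Ee[f(W)] - \Ee_\xi[f(V_\xi)]| \le \Ee|f(W)| + \Ee_\xi|f(V_\xi)| \le 2 M_0(f)$ when $f$ is bounded; when only $M_1(f) < \infty$, both expectations are finite by Remark~\ref{rk:MBd} and Lemma~\ref{lm:proxE}, but $M_0(f)$ may be infinite, in which case the bound is trivially true under the stated convention. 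So the only content here is the bounded case, which is immediate.

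For \eqref{eq:proxBd1}, when $f$ is continuously differentiable with $M_0(f) < \infty$ or $M_1(f) < \infty$, the identity \eqref{eq:prox} gives
\[
 |D_\xi f| = \left| \int_\Xi \Scalp[big]{\Ee_\eta[\nabla f(V_\eta)]}{\nu_\xi(\dl \eta)} \right|
 \le \int_\Xi \bigl| \Ee_\eta[\nabla f(V_\eta)] \bigr|_\wedge \, |\nu_\xi|(\dl \eta),
\]
using the duality between the injective and projective norms on tensors (Subsection~\ref{ssc:Ten}) together with the definition of the integral in Definition~\ref{df:IntTen}. Since $|\nabla f(V_\eta)|_\vee \le M_1(f)$ pointwise (Remark~\ref{rk:Rademacher}) and $|\cdot|_\vee$ on $1$-tensors is just the Euclidean norm $= |\cdot|_\wedge$, Jensen's inequality yields $|\Ee_\eta[\nabla f(V_\eta)]| \le M_1(f)$, so the integral is bounded by $M_1(f) \, |\nu_\xi|(\Xi) = M_1(f)\,\beta_1^{(\xi)}$. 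One subtlety: \eqref{eq:prox} as stated holds for $f$ with \emph{bounded} derivative, so for the case $M_1(f) < \infty$ but $M_0(f)$ possibly infinite, I would first approximate $f$ by truncating --- e.g.\ replace $f$ by $f_R$ agreeing with $f$ on a ball and having bounded derivative, or apply \eqref{eq:prox} to a sequence with bounded derivatives converging to $\nabla f$ --- and pass to the limit using dominated convergence, the domination being supplied by Remark~\ref{rk:MBd} and Lemma~\ref{lm:proxE}. This approximation step is where I expect the main technical friction, since it must be done consistently across all three bounds.

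For \eqref{eq:proxBd2}, when $f$ is twice continuously differentiable with $M_1(f) < \infty$, I would again start from \eqref{eq:prox} and, inside the integral, write $\Ee_\eta[\nabla f(V_\eta)] = \Ee[\nabla f(W)] + \bigl(\Ee_\eta[\nabla f(V_\eta)] - \Ee[\nabla f(W)]\bigr)$. The first term contributes $\bigl| \Ee[\nabla f(W)] \bigr| \, |\nu_\xi|(\Xi) = \bigl|\Ee[\nabla f(W)]\bigr|\,\beta_1^{(\xi)}$ after bounding the tensor pairing. For the second term, I recognize that $\Ee_\eta[\nabla f(V_\eta)] - \Ee[\nabla f(W)] = -\bigl(\Ee[(\nabla f)(W)] - \Ee_\eta[(\nabla f)(V_\eta)]\bigr) = -D_\eta(\nabla f)$, applying \eqref{eq:prox} componentwise to each coordinate of the vector field $\nabla f$; by \eqref{eq:proxBd1} applied to each component (whose gradients are bounded by $M_2(f)$ in the appropriate injective norm), this is bounded in norm by $M_2(f)\,\beta_1^{(\eta)}$. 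Integrating $|\nu_\xi|(\dl\eta)$ and recalling $\beta_2^{(\xi)} = \int_\Xi \beta_1^{(\eta)}\,|\nu_\xi|(\dl\eta)$ gives the term $M_2(f)\,\beta_2^{(\xi)}$, completing the bound. The finiteness bookkeeping --- that $\beta_1^{(\xi)}$ or $\beta_2^{(\xi)}$ could be infinite, in which case the right-hand sides are $+\infty$ and the inequality holds vacuously --- follows the stated conventions throughout.
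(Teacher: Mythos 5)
Your overall plan is the same as the paper's (apply \eqref{eq:prox}, estimate the integral via the total variation of $\nu_\xi$, and obtain \eqref{eq:proxBd2} by applying \eqref{eq:proxBd1} to suitable scalar functions built from $\nabla f$), but two steps as written contain gaps.

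For \eqref{eq:proxBd1}, the truncation you propose is directed at the wrong case. If $M_1(f)<\infty$, then by Remark~\ref{rk:Rademacher} the derivative $\nabla f$ is already bounded (by $M_1(f)$), so \eqref{eq:prox} applies directly and no approximation is needed; the possible unboundedness of $M_0(f)$ is irrelevant here. The case that actually needs a separate argument is $M_0(f)<\infty$ with $M_1(f)=\infty$, where \eqref{eq:prox} need not apply. There the right-hand side is $\infty\cdot\beta_1^{(\xi)}$, which is $\infty$ when $\beta_1^{(\xi)}>0$ (trivial), and $0$ when $\beta_1^{(\xi)}=0$; in the latter subcase one must show $D_\xi f=0$, which the paper does by noting that $\beta_1^{(\xi)}=0$ forces $\Ee_\xi[\tilde f(V_\xi)]=\Ee[\tilde f(W)]$ for all smooth $\tilde f$ with bounded derivative, hence $V_\xi$ and $W$ are equal in distribution. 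Your proposal does not cover this.

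For \eqref{eq:proxBd2}, applying \eqref{eq:proxBd1} ``componentwise to each coordinate of $\nabla f$'' does not deliver the stated bound. Componentwise you get $|D_\eta(\partial_i f)|\le M_1(\partial_i f)\,\beta_1^{(\eta)}\le M_2(f)\,\beta_1^{(\eta)}$ for each $i$, and assembling these into the Euclidean norm of the vector $D_\eta(\nabla f)$ costs a factor $\sqrt{d}$. To obtain the clean factor $M_2(f)$, argue directionally: set $f_u(w):=\scalp{\nabla f(w)}{u}$ for a unit vector $u$, apply \eqref{eq:proxBd1} (with $\eta$ in place of $\xi$) to $f_u$, noting $M_1(f_u)\le M_2(f)\,|u|$, and then take the supremum over $|u|\le 1$ to get $\bigl|\Ee_\eta[\nabla f(V_\eta)]\bigr|\le\bigl|\Ee[\nabla f(W)]\bigr|+M_2(f)\,\beta_1^{(\eta)}$. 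This is exactly the paper's argument; the decomposition $\Ee_\eta[\nabla f(V_\eta)]=\Ee[\nabla f(W)]-D_\eta(\nabla f)$ is fine, but the norm of $D_\eta(\nabla f)$ must be controlled via the supremum over directions, not by summing squared components.
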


\begin{remark}
Under the condition specified for each particular bound, all underlying expectations exist.
This follows from Remark~\ref{rk:MBd} and Lemma~\ref{lm:proxE}.
\end{remark}

\begin{proof}[Proof of Lemma~\ref{lm:proxBd}]
The bound~\eqref{eq:proxBd0} is immediate. To prove \eqref{eq:proxBd1}, assume first that
$ M_1(f) < \infty $. Combining \eqref{eq:prox} with Proposition~\ref{pr:IntVectMeasEst},
we find that
\begin{equation}
\label{eq:proxBdiint}
 |D_\xi f|
 \le
 \int_\Xi \bigl| \Ee_\eta \bigl[ \nabla f(V_\eta) \bigr] \bigr| \, |\nu_\xi|(\dl \eta)
\end{equation}
and \eqref{eq:proxBd1} follows. The latter is trivial if $ M_1(f) = \infty $ and
$ \beta_1^{(\xi)} > 0 $. If $ \beta_1^{(\xi)} = 0 $, then, by \eqref{eq:prox},
we have $ \Ee_\xi \bigl[ \tilde f(V_\xi) \bigr] = \Ee \bigl[ \tilde f(W) \bigr] $
for all continuously differentiable $ \tilde f $ with bounded derivative. As a
result, $ V_\xi $ and $ W $ have the same distribution and \eqref{eq:proxBd1} again
follows.

Applying \eqref{eq:proxBd1} with the function $ f_u(w) := \scalp{\nabla f}{u} $
in place of $ f $, where $ u \in \RR^d $, and with $ \eta $ in place of $ \xi $, we obtain
$ \bigl| \Scalp{\Ee_\eta \bigl[ \nabla f(V_\eta) \bigr]}{u} \bigr| \le
\bigl| \Scalp{\Ee \bigl[ \nabla f(W) \bigr]}{u} \bigr| + M_2(f) \, |u| \, \beta_1^{(\eta)} $.
Taking the supremum over $ |u| \le 1 $, we derive
$ \bigl| \Ee_\eta \bigl[ \nabla f(V_\eta) \bigr] \bigr| \le \bigl| \Ee \bigl[ \nabla f(W) \bigr] \bigr|
+ M_2(f) \, \beta_1^{(\eta)} $. Plugging into \eqref{eq:proxBdiint}, \eqref{eq:proxBd2} follows,
completing the proof.
\end{proof}

\begin{proof}[Proof of Proposition~\ref{pr:size2zero}]
First, we show that $ \Ee \bigl| f(W) W \bigr| < \infty $ and that \eqref{eq:size} still applies for
a continuously differentiable function $ f $ with bounded derivative. By Proposition~\ref{pr:size:abs},
it suffices to check that $ \int_{\Xi} \Ee_\xi \bigl| f(V_\xi) \bigr| \, |\mu|(\dl \xi)
< \infty $. Since $ |f(w)| \le |f(0)| + |w| \, M_1(f) $, it suffices to check that
$ \int_\Xi \Ee_\xi |V_\xi| \, |\mu|(\dl \xi) < \infty $.
However, this follows from the finiteness of $ \beta_2 $ by Lemma~\ref{lm:proxE}.

As $ \tilde f(w) := \sqrt{1 + |w|^2} $ is countinuously differentiable with bounded derivative,
$ \Ee \bigl| \tilde f(W) W \bigr| $ must be finite. Therefore, $ \Ee |W|^2 $ is finite, too.

Combining \eqref{eq:size} with the fact that $ \mu(\Xi) = 0 $ (which follows from Remark~\ref{rk:sizeE}
and the assumption $ \Ee W = 0 $), we obtain
\[
 \Ee \bigl[ f(W) W \bigr]
 =
 \int_{\Xi} \Bigl( \Ee_\xi \bigl[ f(V_\xi) \bigr] - \Ee \bigl[ f(W) \bigr] \Bigr)
 \mu(\dl \xi)
 \, .
\]
Applying \ref{ass:prox} and \eqref{eq:ActPureTens}, and recalling Definition~\ref{df:Phisuccmeas},
we rewrite this as
\begin{align*}
 \Ee \bigl[ f(W) W \bigr]
 &=
 - \int_{\Xi} \int_\Xi \Scalp[Big]%
   {\Ee_\eta \bigl[ \nabla f(V_\eta) \bigr]}%
   {\nu_\xi(\dl \eta)}
 \, \mu(\dl \xi) \\
 &=
 - \int_{\Xi} \int_\Xi
   \bigl( \mu(\dl \xi) \otimes \nu_\xi(\dl \eta) \bigr)
   \Ee_\eta \bigl[ \nabla f(V_\eta) \bigr]
 \, .
\end{align*}
A standard argument shows that $ \int_\Xi h \,\dl \breve \mu = \int_\Xi \int_\Xi h(\eta) \,
\nu_\xi(\dl \eta) \, \mu(\dl \xi) $ for all bounded measurable functions $ h $.
Property~\ref{ass:zero} now follows.
\end{proof}

\begin{proposition}
\label{pr:zeroaltStr}
Assume \ref{ass:zeroalt} and take a non-decreasing function $ h \Colon [0, \infty) \to [0, \infty) $,
such that
\begin{equation}
\label{eq:zeroaltStr:cond}
 \int_\Xi \Ee_\xi \bigl[ h(|V_\xi|) \bigr] \,
 |\breve \mu|_\wedge(\dl \xi) < \infty \, .
\end{equation}
Let $ F \Colon \RR^d \to \RR^d $ be a continuously differentiable vector function, such that
$ |\nabla F|_\vee(w) \le h(|w|) $ for all $ w \in \RR^d $.
Then $ \Ee \bigl| \scalp{F(W)}{W} \bigr| < \infty $ and \eqref{eq:zeroalt} remains true.
\end{proposition}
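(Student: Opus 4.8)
The plan is to mimic the truncation argument used in Proposition~\ref{pr:size:abs}, now with a gradient in place of a scalar. The quantity $ \Ee\bigl[\scalp{F(W)}{W}\bigr] $ is split coordinatewise: for a fixed unit vector $ u\in\RR^d $, set $ g(w) := \scalp{F(w)}{u} $, so that $ \scalp{F(W)}{W} $ has the representation $ \sum_k \scalp{F(W)}{e_k}\scalp{W}{e_k} $ and it suffices to control $ \Ee\bigl|\scalp{F(W)}{e_k}\scalp{W}{e_k}\bigr| $ for each standard basis vector $ e_k $. Equivalently, one wants a uniform bound on $ \Ee\bigl|\scalp{F(W)}{u}\scalp{W}{u}\bigr| $ for $ |u|\le 1 $, which then yields $ \Ee\bigl[|F(W)|\,|W|\bigr]<\infty $ and hence $ \Ee\bigl|\scalp{F(W)}{W}\bigr|<\infty $.

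\medskip
First I would construct truncated vector functions. For $ n\in\NN $, let $ \chi_n\Colon\RR^d\to[0,1] $ be a smooth radial cutoff that equals $ 1 $ on the ball of radius $ n $, vanishes outside the ball of radius $ n+1 $, and has $ |\nabla\chi_n|\le 2 $; put $ F_n := \chi_n\,F $. Then $ F_n $ is continuously differentiable with bounded derivative, so \eqref{eq:zeroalt} applies to $ F_n $. Moreover $ |\nabla F_n|_\vee(w) \le h(|w|) + 2\,|F(w)| $ on the annulus $ n\le|w|\le n+1 $ and $ |\nabla F_n|_\vee(w)\le h(|w|) $ elsewhere; using $ |F(w)| \le |F(0)| + |w|\,h(|w|) $ (integrate $ |\nabla F|_\vee $ along the segment from $ 0 $ to $ w $ and use that $ h $ is non-decreasing), we get a bound $ |\nabla F_n|_\vee(w) \le C\bigl(1 + h(|w|+1)\bigr)(1+|w|) =: H(|w|) $ valid for all $ n $ and all $ w $, with $ H $ non-decreasing. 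The point is that $ H $ dominates $ |\nabla F_n|_\vee $ uniformly in $ n $.

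\medskip
Next, to get the uniform estimate, I would apply the trick of Proposition~\ref{pr:size:abs}: for fixed $ n $ and $ |u|\le 1 $, define $ F_{n,u}(w) := F_n(w) $ when $ \scalp{w}{u}\ge 0 $ and $ F_{n,u}(w) := -F_n(w) $ otherwise, so that $ \scalp{F_{n,u}(W)}{W} = \bigl|\scalp{F_n(W)}{u}\bigr|\,|\scalp{W}{u}|$ pointwise only in the relevant coordinate --- more carefully, one should replace the sign flip by using $ F_{n,u}(w) = \mathrm{sgn}\bigl(\scalp{w}{u}\bigr)\,\scalp{F_n(w)}{u}\,u $ so that $ \scalp{F_{n,u}(W)}{W} = \bigl|\scalp{F_n(W)}{u}\,\scalp{W}{u}\bigr| $. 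Since this $ F_{n,u} $ is not continuously differentiable across the hyperplane $ \scalp{w}{u}=0 $, one smooths the sign function on a width-$ \delta $ slab and lets $ \delta\downarrow 0 $; the smoothed $ F_{n,u,\delta} $ is continuously differentiable with bounded derivative and $ |\nabla F_{n,u,\delta}|_\vee(w)\le H(|w|) + C\delta^{-1}|F_n(w)|\One(|\scalp{w}{u}|\le\delta) $. Applying \eqref{eq:zeroalt} to $ F_{n,u,\delta} $ and then \eqref{eq:zeroaltStr:cond} (with $ h $ replaced by $ H $) bounds the right-hand side; for the $ \delta $-dependent term, one uses that $ \delta^{-1}\One(|\scalp{w}{u}|\le\delta) $ integrates against any $ \Pp_\xi $-distribution of $ V_\xi $ in a controlled way after integrating in $ \xi $, or, more simply, one avoids the smoothing entirely by noting that \eqref{eq:zeroalt} already holds for Lipschitz $ F $ with bounded derivative by a routine mollification argument and applying it directly to the Lipschitz $ F_{n,u} $. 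Either way, one obtains
\[
 \Ee\bigl|\scalp{F_n(W)}{u}\,\scalp{W}{u}\bigr|
 \;=\;
 \Ee\bigl[\scalp{F_{n,u}(W)}{W}\bigr]
 \;\le\;
 \int_\Xi \Ee_\xi\bigl[ H(|V_\xi|) \bigr]\,|\breve\mu|_\wedge(\dl\xi)
 \;<\;\infty \, ,
\]
where the first inequality uses \eqref{eq:zeroalt} for $ F_{n,u} $ together with $ |\nabla F_{n,u}|_\vee \le H(|\cdot|) $, and the bound is independent of $ n $ and of $ u $ with $ |u|\le 1 $. Taking the supremum over $ |u|\le 1 $ and then $ n\to\infty $ via Fatou's lemma (the $ F_n $ converge pointwise to $ F $) gives $ \Ee\bigl[|F(W)|\,|W|\bigr] \le d^{1/2}\int_\Xi \Ee_\xi[H(|V_\xi|)]\,|\breve\mu|_\wedge(\dl\xi) < \infty $, hence $ \Ee\bigl|\scalp{F(W)}{W}\bigr|<\infty $. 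Finally, $ \scalp{F_n(W)}{W}\to\scalp{F(W)}{W} $ pointwise and $ \bigl|\scalp{F_n(W)}{W}\bigr|\le |F(W)|\,|W|\in L^1 $, so dominated convergence in \eqref{eq:zeroalt} written for each $ F_n $ --- where one must also check the right-hand side converges, using $ |\nabla F_n|_\vee \le H(|\cdot|) $, dominated convergence against $ |\breve\mu|_\wedge $, and $ \nabla F_n \to \nabla F $ pointwise --- shows \eqref{eq:zeroalt} remains true for $ F $.

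\medskip
The main obstacle is the passage to the limit on the right-hand side of \eqref{eq:zeroalt}: one needs $ \nabla F_n(V_\xi)\to\nabla F(V_\xi) $ in a sense strong enough to survive integration against $ \breve\mu(\dl\xi) $, and the cutoff $ \chi_n $ contaminates $ \nabla F_n $ on the annulus $ n\le|\cdot|\le n+1 $. This is handled precisely by the uniform domination $ |\nabla F_n|_\vee\le H(|\cdot|) $ together with hypothesis \eqref{eq:zeroaltStr:cond} applied to $ H $ in place of $ h $; since $ H $ is non-decreasing and $ H\ge h $ only up to the fixed multiplicative-and-additive adjustment above, \eqref{eq:zeroaltStr:cond} for $ h $ does \emph{not} immediately give it for $ H $ --- so in fact one should either strengthen the construction of $ F_n $ so that $ |\nabla F_n|_\vee $ is dominated by $ 2h(|\cdot|+1) $ exactly (e.g.\ by a more careful choice of cutoff scale, replacing the annulus $ [n,n+1] $ by $ [n,2n] $ so the gradient of the cutoff contributes only $ n^{-1}|F| \lesssim h(|\cdot|) $ on that region), or note that the stated conclusion is what the proofs of Theorems~\ref{th:CLT:size} and \ref{th:CLT:zero} actually invoke, always with an $ h $ of polynomial-times-$ M_r(f) $ growth for which $ H $ satisfies \eqref{eq:zeroaltStr:cond} automatically whenever $ h $ does. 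I would adopt the first fix, choosing the cutoff on scale $ [n,2n] $, which makes the uniform dominating function genuinely comparable to $ h $ and closes the argument cleanly.
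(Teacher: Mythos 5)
Your truncate--dominate--pass-to-the-limit skeleton matches the paper's, and your self-diagnosed fix for the cutoff scaling (widening the annulus so the gradient of the cutoff contributes at most a constant times $h$) is sound; incidentally the paper achieves the same effect more cleanly by composing with a radial contraction, setting $F_n(w) := F\bigl(\psi_n(|w|)\,w\bigr)$ rather than multiplying $F$ by a cutoff, which gives $|\nabla F_n(w)|_\vee \le C\,h(|w|)$ with a universal constant directly, without ever invoking the a priori bound on $|F(w)|$.

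The genuine gap is in establishing $\Ee\bigl|\scalp{F(W)}{W}\bigr| < \infty$. Your plan is to adapt the sign-flip device of Proposition~\ref{pr:size:abs}, but the sign-flipped $F_{n,u}$ is not $C^1$ across the hyperplane $\scalp{w}{u}=0$, so Assumption~\ref{ass:zeroalt} cannot be applied to it. The smoothing workaround you sketch introduces a term of order $\delta^{-1}|F_n(w)|\One(|\scalp{w}{u}|\le\delta)$, and there is no reason for its integral against the $V_\xi$-distributions to vanish (or even stay bounded) as $\delta\downarrow 0$ --- for instance the law of $V_\xi$ may put positive mass on that hyperplane. The fallback claim that \eqref{eq:zeroalt} extends to Lipschitz $F$ ``by a routine mollification argument'' is circular here: mollification would itself require a dominated-convergence passage of exactly the kind you have not yet justified. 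The paper sidesteps all of this by constructing an auxiliary $C^1$ radial function $\tilde F(w) := \frac{w}{|w|}\int_0^{|w|}\tilde h(t)\,\dl t$ with $\tilde h := \tfrac13(h-h_0)$, which satisfies $|\nabla\tilde F|_\vee \le h(|\cdot|)$ and, crucially, $\scalp{\tilde F(w)}{w}\ge 0$ for all $w$. Because the truncations $\tilde F_n$ inherit this sign, Fatou's lemma applied to $\Ee\bigl[\scalp{\tilde F_n(W)}{W}\bigr]$ directly gives $\Ee\bigl[\scalp{\tilde F(W)}{W}\bigr]<\infty$, and the explicit pointwise bound $\bigl|\scalp{F_n(w)}{w}\bigr| \le |F(0)|\,|w| + h_0\,|w|^2 + 3\scalp{\tilde F(w)}{w}$ (using $\Ee|W|^2<\infty$ from \ref{ass:St}) then furnishes the dominating function needed for the left-hand side. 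This nonnegativity trick is the key idea your proposal is missing.
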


\begin{proof}
For each $ n \in \NN $, define function $ \psi_n \Colon [0, \infty) \to [0, 1] $
as $ \psi_n(t) := 1 $ for $ t \le n $, $ \psi_n(t) := 1 - \frac{1}{2 n^2} (t - n)^2 $
for $ n \le t \le 2n $, $ \psi_n(t) := \frac{1}{2 n^2} (t - 3n)^2 $ for $ 2n \le t \le 3n $
and $ \psi_n(t) := 0 $ for $ t \ge 3n $. Observe that $ \psi_n $ is well-defined and
that for each fixed $ n $, the expression $ t \, \psi_n(t) $ is bounded in $ t $.
Differentiating, we obtain $ \psi'_n(t) = 0 $ for $ t \le n $, $ \psi'_n(t) = \frac{1}{n^2} (t - n) $
for $ n \le t \le 2n $, $ \psi'_n(t) = \frac{1}{n^2} (t - 3n) $ for $ 2n \le t \le 3n $
and $ \psi'_n(t) = 0 $ for $ t \ge 3n $. Thus, $ \psi $ is continuously differentiable
and observe that the expression $ t \, |\psi'_n(t)| $ is uniformly bounded in $ t $ and $ n $.

Now let $ F_n(w) := F \bigl( \psi_n(|w|) \, w \bigr) $. Identifying $ 2 $-tensors with
linear transformations (see Section~\ref{ssc:Ten}, in particular \eqref{eq:ActTensScalp})
and applying the chain rule, compute
\[
 \nabla F_n(w) = \nabla F \bigl( \psi_n(|w|) \, w \bigr) \left(
  \frac{\psi'_n(|w|)}{|w|} \, w \otimes w + \psi_n(|w|) \, \Id_d \right)
\]
and notice that $ F_n $ is differentiable at the origin because the first term vanishes
for $ |w| \le n $.
Applying \eqref{eq:ActTensScalp}, \eqref{eq:ActPureTens} and again \eqref{eq:ActTensScalp} in
turn, we obtain
\begin{align*}
 \Scalp[big]{\nabla F \bigl( \psi_n(|w|) \, w \bigr)(w \otimes w)}{u \otimes v}
 &=
 \Scalp[big]{\nabla F \bigl( \psi_n(|w|) \, w \bigr)(w \otimes w) v}{u} \\
 &=
 \Scalp[big]{\nabla F \bigl( \psi_n(|w|) \, w \bigr) w}{u} \scalp{v}{w} \\
 &=
 \Scalp[big]{\nabla F \bigl( \psi_n(|w|) \, w \bigr)}{u \otimes w} \scalp{v}{w}
 \, .
\end{align*}
Therefore,
\[
 \scalp{\nabla h_n(w)}{u \otimes v}
 =
 \frac{\psi'_n(|w|)}{|w|} \, \Scalp[big]{\nabla F \bigl( \psi_n(|w|) \, w \bigr)}{u \otimes w} \scalp{v}{w}
  +
 \psi_n(|w|) \, \Scalp[big]{\nabla F \bigl( \psi_n(|w|) \, w \bigr)}{u \otimes v}
 \, .
\]
Taking the supremum over $ u $ and $ v $ with $ |u|, |v| \le 1 $, we obtain
\[
 |\nabla F_n(w)|_\vee
 \le
 \bigl( |w| \, |\psi'_n(|w|)| + \psi_n(|w|) \bigr) \, \bigl| \nabla F \bigl( \psi_n(|w|) \, w \bigr) \bigr|_\vee
 \, .
\]
Since $ t \, |\psi'(t)| $ is uniformly bounded in $ t $ and $ n $ and since $ h $ is non-decreasing,
there exists a constant $ C $, such that $ |\nabla F_n(w)|_\vee \le C \, h(|w|) $ for all $ n $ and $ w $.

For each fixed $ n $, the expression $ \psi_n(|w|) \, w $ is bounded in $ w \in \RR^d $. Since
$ F $ is continuously differentiable, $ |\nabla F_n(w)|_\vee $ is also bounded in $ w \in \RR^d $.
Therefore, \eqref{eq:zeroalt} holds true with $ F_n $ in place of $ F $.

Now observe that the functions $ h_n $ converge pointwise to $ F $ and that the functions
$ \nabla F_n $ converge pointwise to $ \nabla F $ as well. Recalling \eqref{eq:zeroaltStr:cond}
and applying the dominated convergence theorem, we obtain
\begin{equation}
\label{eq:zeroaltStr:lim}
 \lim_{n \to \infty} \Ee \bigl[ \scalp{F_n(W)}{W} \bigr]
 =
 \lim_{n \to \infty} \int_\Xi \Scalp[big]{\Ee_\xi
   \bigl[ \nabla F_n(V_\xi) \bigr]}{\breve \mu(\dl \xi)}
 =
 \int_\Xi \Scalp[big]{\Ee_\xi
   \bigl[ \nabla F(V_\xi) \bigr]}{\breve \mu(\dl \xi)}
 \, .
\end{equation}
Now take another continuously differentiable vector function $ \tilde F \Colon \RR^d \to \RR^d $,
such that $ |\nabla \tilde F|_\vee(w) \le h(|w|) $ and, in addition, $ \scalp{\tilde F(w)}{w} \ge 0 $
for all $ w \in \RR^d $. Letting $ \tilde F_n(w) := \tilde F \bigl( \psi_n(|w|) \, w \bigr) $,
observe that we also have $ \scalp{\tilde F_n(w)}{w} \ge 0 $ for all $ w \in \RR^d $.
Fatou's lemma along with \eqref{eq:zeroaltStr:lim} with $ \tilde h_n $ and $ \tilde F $
in place of $ F_n $ and $ F $ implies
\begin{equation}
\label{eq:zeroaltStr:Fin}
 \Ee \bigl[ \scalp{\tilde F(W)}{W} \bigr]
 \le
 \int_\Xi \Scalp[big]{\Ee_\xi
   \bigl[ \nabla \tilde F(V_\xi) \bigr]}{\breve \mu(\dl \xi)}
 \le
 \int_\Xi \Ee_\xi \bigl[ h(|V_\xi|) \bigr]
 |\breve \mu|_\wedge(\dl \xi)
 <
 \infty
 \, .
\end{equation}
Now put $ \tilde F(w) := \frac{w}{|w|} \int_0^{|w|} \tilde h(t) \,\dl t $, where
$ \tilde h(t) := \frac{1}{3} \bigl( h(t) - h_0 \bigr) $ and
$ h_0 := \lim_{s \downarrow 0} h(s) $. For $ w \ne 0 $, compute
\[
 \nabla \tilde F(w)
 =
 \left( \frac{\Id_d}{|w|} - \frac{w \otimes w}{|w|^3} \right) \int_0^{|w|} \tilde h(t) \,\dl t
  +
 \tilde h(|w|) \, \frac{w \otimes w}{|w|^2}
\]
and estimate
\[
 |\nabla \tilde F(w)|_\vee
 \le
 \frac{2}{|w|} \int_0^{|w|} \tilde h(t) \,\dl t + \tilde h(|w|)
 \le
 3 \, \tilde h(|w|)
 \le
 h(|w|) \, ;
\]
the second inequality is true because $ h $ is nondecreasing. From the above, it also follows
that $ \tilde F $ is continuously differentiable at the origin if we put $ \tilde F(0) := 0 $.
Now compute
\[
 \scalp{\tilde F(w)}{w}
 =
 |w| \int_0^{|w|} \tilde h(t) \,\dl t
 =
 \frac{|w|}{3} \int_0^{|w|} h(t) \,\dl t - \frac{h_0 |w|^2}{3}
\]
and estimate
\begin{align*}
 |F_n(w)|
 &=
 F \bigl( \psi_n(|w|) \, w \bigr)
 \le
 |F(0)| + \int_0^{\psi_n(|w|) \, |w|} h(t) \,\dl t
 \le
 |F(0)| + \int_0^{|w|} h(t) \,\dl t \, ,
\\
 |\scalp{F_n(w)}{w}|
 &\le
 |F_n(w)| \, |w|
 \le
 |F(0)| \, |w| + |w| \int_0^{|w|} h(t) \,\dl t
 \le
 |F(0)| \, |w| + h_0 \, |w|^2 + 3 \scalp{\tilde F(w)}{w} \, .
\end{align*}
Recalling \eqref{eq:zeroaltStr:Fin}, it follows that the sequence of random variables $ \scalp{F_n(W)}{W} $
is dominated by a non-negative random variable with finite expectation. Applying the
dominated convergence theorem and combining with \eqref{eq:zeroaltStr:lim},
the finiteness of $ \Ee \bigl| \scalp{F(W)}{W} \bigr| $ along with \eqref{eq:zeroalt}
follows.
\end{proof}

\subsection{Gaussian smoothing}

Gaussian smoothing will be one of the key tools to prove
Theorems~\ref{th:CLT:size} and \ref{th:CLT:zero}.
Let $ \phi_d $ be the density of the standard $ d $-variate normal density, i.~e.,
$ \phi_d(z) = (2 \pi)^{- d/2} \exp(- |z|^2/2) $.  For $ \eps \ge 0 $
and a map $ F \Colon \RR^d \to V $, where $ V $ is a finite-dimensional vector space,
define
\begin{equation}
\label{eq:GaussSm}
 \Normal_\eps F(w) := \int_{\RR^d} F(w + \eps z) \, \phi_d(z) \,\dl z \, .
\end{equation}
Notice that $ \Normal_0 F = F $ and $ \Normal_1 F(0) = \Normal h $.
Next, define constants $ c_0, c_1, c_2, \ldots $ as
\begin{equation}
\label{eq:cs}
 c_s := \int_{- \infty}^\infty |\phi^{(r)}_1(z)| \,\dl z \, .
\end{equation}
Observe that
\[
 \int_{- \infty}^\infty \bigl| \scalp{\phi^{(s)}_d(z)}{u^{\otimes s}} \bigr| \,\dl z
 \le
 c_s |u|^s
\]
and compute
\begin{equation}
\label{eq:csComp}
 c_0 = 1 \, , \quad
 c_1 = \frac{2}{\sqrt{2 \pi}} \, , \quad
 c_2 = \frac{4}{\sqrt{2 \pi e}} \, , \quad
 c_3 = \frac{2 + 8 \, e^{-3/2}}{\sqrt{2 \pi}}
 \, .
\end{equation}

\begin{lemma}
\label{lm:GaussSm}
Let $ \eps > 0 $. If $ f \Colon \RR^d \to \RR $ is either measurable and bounded or $ M_r(f) < \infty $
for some $ r \in \NN $, then $ \Normal_\eps |f|(w) < \infty $ for all $ w \in \RR^d $ and
$ \Normal_\eps f $ is infinitely differentiable. In addition, we have
\[
 M_{r+s}(\Normal_\eps f) \le \frac{c_s}{\eps^s} \, M_r(f)
\]
for all $ r \in \NN $ and all $ s \in \NN \cup \{ 0 \} $.
\end{lemma}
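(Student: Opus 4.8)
The plan is to work throughout with the two forms of the smoothing operator,
\[
 \Normal_\eps f(w) = \int_{\RR^d} f(w + \eps z)\,\phi_d(z)\,\dl z
 = \eps^{-d}\int_{\RR^d} f(y)\,\phi_d\!\left(\tfrac{y - w}{\eps}\right)\dl y \, ,
\]
related by the substitution $y = w + \eps z$. If $f$ is bounded then $\Normal_\eps |f|(w) \le M_0(f) < \infty$; if $M_r(f) < \infty$ then Remark~\ref{rk:MBd} gives $|f(x)| \le C + D\,|x|^r$, and since the Gaussian has moments of every order, $\Normal_\eps |f|(w) \le C + D\,\Ee\bigl[|w + \eps Z|^r\bigr] < \infty$ with $Z$ standard normal. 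For infinite differentiability I use the second representation: the kernel $w \mapsto \phi_d((y - w)/\eps)$ is $C^\infty$, and on any bounded set of $w$'s all of its $w$-derivatives are dominated, uniformly in $w$, by a fixed polynomial-times-Gaussian in $y$, which is integrable against $|f|$ (bounded, resp.\ polynomially bounded); differentiating under the integral arbitrarily often yields $\Normal_\eps f \in C^\infty$ and, in particular, $\nabla^s(\Normal_\eps f)(w) = \frac{(-1)^s}{\eps^s}\int_{\RR^d} f(w + \eps z)\,\phi_d^{(s)}(z)\,\dl z$.

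For the estimate $M_{r+s}(\Normal_\eps f) \le \frac{c_s}{\eps^s}\,M_r(f)$ there is nothing to prove if $M_r(f) = \infty$, so assume $M_r(f) < \infty$. By Remark~\ref{rk:Rademacher}, $f \in C^{r-1}$ and $\nabla^{r-1}f$ is $M_r(f)$-Lipschitz in the injective norm; one also checks (by the fundamental theorem of calculus, cf.\ Remark~\ref{rk:MBd}) that $f$ and each $\nabla^k f$, $k \le r-1$, grow at most polynomially, which legitimises differentiating under the integral $r-1$ times in the first representation to get $\nabla^{r-1}(\Normal_\eps f) = \Normal_\eps(\nabla^{r-1}f)$. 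For $s = 0$ this already suffices, since $\Normal_\eps$ is an averaging operator: $\bigl|\nabla^{r-1}(\Normal_\eps f)(w) - \nabla^{r-1}(\Normal_\eps f)(w')\bigr|_\vee \le \int_{\RR^d}\bigl|\nabla^{r-1}f(w + \eps z) - \nabla^{r-1}f(w' + \eps z)\bigr|_\vee\phi_d(z)\,\dl z \le M_r(f)\,|w - w'|$. For $s \ge 1$ I reduce to one-dimensional directional derivatives: fix a unit vector $v$ and set $g := \scalp{\nabla^{r-1}f(\cdot)}{v^{\otimes(r-1)}} = \partial_v^{\,r-1}f$, a real-valued function that is $M_r(f)$-Lipschitz (because $|v| \le 1$) and of at most linear growth. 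Since $\Normal_\eps f \in C^\infty$ and $\nabla^{r-1}$ commutes with $\Normal_\eps$, we have $\scalp{\nabla^{r+s-1}(\Normal_\eps f)(w)}{v^{\otimes(r+s-1)}} = \partial_v^{\,s}\bigl[\Normal_\eps g\bigr](w)$.

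The crux is to transfer these $s$ directional derivatives off $g$ --- which need not be differentiable even once --- and onto the Gaussian. Decompose $\RR^d = \RR v \oplus v^\perp$ and write $z = t\,v + z'$; by rotational invariance $\phi_d(z) = \phi_1(t)\,\phi_{d-1}(z')$, $\phi_{d-1}$ being the standard density on $v^\perp$ (vacuous when $d = 1$), so that
\[
 \Normal_\eps g(w) = \int_{v^\perp}\!\left(\int_{\RR} g(w + \eps z' + \eps t\,v)\,\phi_1(t)\,\dl t\right)\phi_{d-1}(z')\,\dl z' \, .
\]
Writing $w = \hat w + \sigma v$ with $\hat w \perp v$ and substituting $t \mapsto t - \sigma/\eps$ in the inner integral puts the whole $\sigma$-dependence onto the factor $\phi_1$; differentiating $s$ times in $\sigma$ (i.e.\ in the direction $v$) under the integral --- now legitimate, $\phi_1$ being a Schwartz function and $g$ polynomially bounded --- and reverting the substitution gives
\[
 \partial_v^{\,s}\bigl[\Normal_\eps g\bigr](w) = \frac{(-1)^s}{\eps^s}\int_{v^\perp}\!\left(\int_{\RR} g(w + \eps z' + \eps t\,v)\,\phi_1^{(s)}(t)\,\dl t\right)\phi_{d-1}(z')\,\dl z' \, .
\]
Subtracting the values at $w$ and $w'$, bounding the bracketed difference of $g$'s by $M_r(f)\,|w - w'|$, and using $\int_{\RR}|\phi_1^{(s)}(t)|\,\dl t = c_s$ together with $\int_{v^\perp}\phi_{d-1}\,\dl z' = 1$, one gets $\bigl|\scalp{\nabla^{r+s-1}(\Normal_\eps f)(w) - \nabla^{r+s-1}(\Normal_\eps f)(w')}{v^{\otimes(r+s-1)}}\bigr| \le \frac{c_s}{\eps^s}\,M_r(f)\,|w - w'|$. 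Taking the supremum over $|v| \le 1$ and invoking the fact that the injective norm of a symmetric tensor is attained on the diagonal $v^{\otimes m}$ (Subsection~\ref{ssc:Ten}) completes the proof. I expect this last manoeuvre to be the only genuine obstacle: since $f$ is merely $(r-1)$-times differentiable, the extra $s$ derivatives cannot be placed on $f$ and must be absorbed by the smoothing kernel via the shift in the $\RR v$-variable --- everything else is dominated-convergence bookkeeping resting on the polynomial growth furnished by $M_r(f) < \infty$.
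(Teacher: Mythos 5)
Your proof is correct and follows essentially the same route as the paper: place $r-1$ derivatives on $f$, put the remaining $s$ on the Gaussian kernel, pair with $v^{\otimes(r+s-1)}$, and invoke the symmetric-tensor injective-norm identity (Proposition~\ref{pr:InjSym}). The only cosmetic difference is that you work with directional derivatives $\partial_v$ and spell out the factorization $\phi_d = \phi_1 \cdot \phi_{d-1}$ along $v$ explicitly, whereas the paper writes $\nabla^{r-1}f \otimes \nabla^s\phi_d$ and absorbs that one-dimensional reduction into the unproved observation $\int |\scalp{\nabla^s\phi_d(z)}{u^{\otimes s}}|\,\dl z \le c_s|u|^s$.
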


\begin{proof}
First, $ \Normal_\eps |f| $ is finite by Remark~\ref{rk:MBd} and the fact that
$ \int_{\RR^d} |z|^r \phi_d(z) \,\dl z $ is finite.
Substituting $ z = y - w/\eps $, we rewrite \eqref{eq:GaussSm} as
\begin{equation}
\label{eq:GaussSm:base}
 \Normal_\eps f(w) = \int_{\RR^d} f(\eps y) \, \phi_d \left( y - \frac{w}{\eps} \right) \dl y \, .
\end{equation}
Differentiating \eqref{eq:GaussSm:base} under the integral sign and substituting back, we obtain
\begin{align*}
 \nabla^s \Normal_\eps f(w)
 &=
 \frac{(-1)^s}{\eps^s} \int_{\RR^d} f(\eps y) \, \nabla^s \phi_d \left( y - \frac{w}{\eps} \right) \dl y \\
 &=
 \frac{(-1)^s}{\eps^s} \int_{\RR^d} f(w + \eps z) \, \nabla^s \phi_d(z) \, \dl z
 \, .
\end{align*}
Further differentiation under the differential sign gives
\[
 \nabla^{r+s-1} \Normal_\eps f(w)
 =
 \frac{(-1)^s}{\eps^s} \int_{\RR^d} \nabla^{r-1} f(w + \eps z) \otimes \nabla^s \phi_d(z) \, \dl z
 \, .
\]
The verification of the validity of the differentiation under the integral sign is left to the reader
as an exercise. Consequently,
\begin{align*}
 \Bigl| \Scalp[big]{\nabla^{r+s-1} \Normal_\eps f(x) - \nabla^{r+s-1} \Normal_\eps f(y)}{u^{\otimes (r+s-1)}} \Bigr|
 &\le
 \frac{|x - y|}{\eps^s} \, |u|^{r-1} \, M_r(f)
 \int_{\RR^d} \bigl| \scalp{\nabla^s \phi_d(z)}{u^{\otimes s}} \bigr| \, \dl z \\
 &\le
 c_s \, \frac{|x - y|}{\eps^s} \, |u|^{r+s-1} \, M_r(f)
 \, .
\end{align*}
By Proposition~\ref{pr:InjSym}, this implies
\[
 \Bigl| \nabla^{r+s-1} \Normal_\eps f(x) - \nabla^{r+s-1} \Normal_\eps f(y) \Bigr|_\vee
 \le
 c_s \, \frac{|x - y|}{\eps^s} \, M_r(f)
 \, .
\]
The result is now immediate.
\end{proof}

\subsection{Bounds on the Stein expectation}

In this subsection, we turn to Stein's method, which will
be implemented in view of the proof of Lemma~1 of Slepian~\citep{Slep}.
We recall
the procedure briefly; for an exposition, see R\"ollin~\citep{RolDim} and
Appendix~H of Chernozhukov, Chetverikov and Kato~\citep{CCK13sup}. Recalling
the definition of $ M_r(f) $ from \eqref{eq:Mr}, take a function
$ f \Colon \RR^d \to \RR $ with $ M_r(f) < \infty $ for some $ r \in \NN $.
For $ 0 \le \alpha \le \pi/2 $, define
\begin{equation}
\label{eq:Ualpha}
 \Slep_\alpha f(w)
 :=
 \Normal_{\sin \alpha} f(w \cos \alpha)
 =
 \int_{\RR^d} f(w \cos \alpha + z \sin \alpha) \, \phi_d(z) \,\dl z
\end{equation}
In particular, $ \Slep_0 f = f $ and $ \Slep_{\pi/2} f = \Normal f $.
By Lemma~\ref{lm:GaussSm}, $ \Slep_\alpha f $ is defined everywhere and is
infinitely differentiable.

For a random variable $ W $, $ \Ee \bigl[ \Slep_\alpha f(W) \bigr] $ can be regarded as an
interpolant between $ \Ee \bigl[ f(W) \bigr] $ and $ \Normal f $. A straightforward
calculation shows that
\[
 \frac{\dl}{\dl \alpha} \, \Slep_\alpha f(w) = \Stein \Slep_\alpha f(w) \tan \alpha
 \, ,
\]
where $ \Stein $ denotes the \emph{Stein operator}:
\begin{equation}
\label{eq:SteinOp}
 \Stein f(w) := \Delta f(w) - \scalp{\nabla f(w)}{w}
\end{equation}
and where $ \Delta $ denotes the Laplacian. Integrating over $ \alpha $ and
taking expectation, we find that
\begin{equation}
\label{eq:SteinSlep}
 \Ee \Slep_\eps f(W) - \Normal f
 =
 - \int_\eps^{\pi/2} \Ee \bigl[ \Stein \Slep_\alpha f(W) \bigr] \tan \alpha \,\dl \alpha
 \, ,
\end{equation}
for all $ 0 \le \eps \le \pi/2 $. More precisely, if
$ \int_\eps^{\pi/2} \Ee \bigl| \Stein \Slep_\alpha f(W) \bigr| \tan \alpha \,\dl \alpha $
is finite, then, by Fubini's theorem, $ \Ee \bigl| \Slep_\eps f(W) \bigr| $ is also finite and
\eqref{eq:SteinSlep} is true.

\begin{lemma}
\label{lm:DerUalpha}
Let $ r \in \NN $, $ s \in \NN \cup \{ 0 \} $ and $ 0 < \alpha \le \pi/2 $. Then any function
$ f \Colon \RR^d \to \RR $ with $ M_r(f) < \infty $ satisfies
\begin{align}
 \label{eq:DerUalpha}
 M_{r+s}(\Slep_\alpha f)
 \le
 c_s \, \frac{\cos^{r+s} \alpha}{\sin^s \alpha} \, M_r(f)
 \, , \\
 \label{eq:NDerUalpha}
 \bigl| \Normal \, \nabla^{r+s} \Slep_\alpha f \bigr|_\vee
 \le
 c_s \, M_r(f) \cos^{r+s} \alpha
 \, .
\end{align}
\end{lemma}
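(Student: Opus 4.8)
The plan is to derive both inequalities from Lemma~\ref{lm:GaussSm} by unwinding the definition~\eqref{eq:Ualpha} of $\Slep_\alpha f$ as a rescaled Gaussian smoothing composed with a linear contraction. Recall that $\Slep_\alpha f(w) = \Normal_{\sin\alpha} f(w\cos\alpha)$, so $\Slep_\alpha f = (\Normal_{\sin\alpha} f) \circ A_\alpha$, where $A_\alpha$ is the scalar multiplication by $\cos\alpha$ on $\RR^d$. First I would record the elementary chain-rule fact that if $g = h \circ A_\alpha$ then $\nabla^{r-1} g(w) = (\cos\alpha)^{r-1}\, \nabla^{r-1} h(w\cos\alpha)$, since each of the $r-1$ differentiations pulls down one factor of $\cos\alpha$. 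Consequently $M_r(g) \le (\cos\alpha)^{r-1} \cdot \cos\alpha \cdot M_r(h) = (\cos\alpha)^r M_r(h)$: the extra factor of $\cos\alpha$ comes from the change of variables $|w_1\cos\alpha - w_2\cos\alpha| = \cos\alpha\,|w_1 - w_2|$ in the denominator of the difference quotient~\eqref{eq:Mr} (here one uses $0 \le \cos\alpha \le 1$, which is where $0 < \alpha \le \pi/2$ enters, guaranteeing the smoothing parameter $\sin\alpha$ is positive so that Lemma~\ref{lm:GaussSm} applies). Applying this with $h = \Normal_{\sin\alpha} f$ and combining with the bound $M_{r+s}(\Normal_{\sin\alpha} f) \le c_s (\sin\alpha)^{-s} M_r(f)$ from Lemma~\ref{lm:GaussSm} yields
\[
 M_{r+s}(\Slep_\alpha f) \le (\cos\alpha)^{r+s}\, M_{r+s}(\Normal_{\sin\alpha} f) \le c_s\,\frac{\cos^{r+s}\alpha}{\sin^s\alpha}\, M_r(f),
\]
which is \eqref{eq:DerUalpha}.

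For \eqref{eq:NDerUalpha} I would differentiate $\Slep_\alpha f$ directly under the integral sign. Writing out $\nabla^{r+s}\Slep_\alpha f(w)$ from the representation in~\eqref{eq:Ualpha}, each gradient acting on the integrand $f(w\cos\alpha + z\sin\alpha)$ produces a factor $\cos\alpha$; however, to get a clean bound in terms of $M_r(f)$ rather than $M_{r+s}(f)$ I would instead split the $r+s$ derivatives as "$r-1$ derivatives landing on $f$'s difference quotient" plus "$s+1$ derivatives moved onto the Gaussian density" exactly as in the proof of Lemma~\ref{lm:GaussSm}. Concretely, after the substitutions there used, one obtains
\[
 \nabla^{r+s}\Slep_\alpha f(w) = (\cos\alpha)^{r-1} \frac{(-1)^{s+1}}{(\sin\alpha)^{s+1}} \int_{\RR^d} \nabla^{r-1} f(w\cos\alpha + z\sin\alpha) \otimes \nabla^{s+1}\phi_d(z)\,\dl z \cdot(\text{correction}),
\]
but the cleaner route is: $\Normal\,\nabla^{r+s}\Slep_\alpha f = \nabla^{r+s}(\Normal_1 \Slep_\alpha f)(0)$ since $\Normal G = \Normal_1 G(0)$, and $\Normal_1 \Slep_\alpha f(w) = \int f((w+z_1\sin\alpha\cdot\text{stuff})\cdots)$ composes two Gaussian smoothings whose variances add, giving $\Normal_1 \Slep_\alpha f = \Normal_{(\sin^2\alpha + \cos^2\alpha\cdot 1)^{1/2}} \bigl(f\circ(\text{scaling by }\cos\alpha)\bigr)$-type identity; then evaluate the $(r+s)$-th derivative at $0$ and bound it using $|\nabla^{r-1}f(x) - \nabla^{r-1}f(y)|_\vee \le M_r(f)|x-y|$ together with $\int |\scalp{\nabla^s\phi_d(z)}{u^{\otimes s}}|\,\dl z \le c_s|u|^s$, picking up precisely $(\cos\alpha)^{r+s}$ from the $r+s$ scalings. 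Proposition~\ref{pr:InjSym} then converts the componentwise estimate into the injective-norm estimate $|\Normal\,\nabla^{r+s}\Slep_\alpha f|_\vee \le c_s\, M_r(f)\cos^{r+s}\alpha$.

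The main obstacle I anticipate is bookkeeping the factors of $\cos\alpha$ versus $\sin\alpha$ correctly and justifying differentiation under the integral sign (as in Lemma~\ref{lm:GaussSm}, this is routine but must be stated). In particular, for \eqref{eq:NDerUalpha} one must be careful that $\Normal\,\nabla^{r+s}\Slep_\alpha f$ has \emph{no} negative power of $\sin\alpha$—the normal average kills the singularity because $\Normal_1 \Slep_\alpha f$ is itself a Gaussian smoothing of $f\circ A_\alpha$ with bounded variance $\ge 1$ in the relevant sense, so all $s$ "extra" derivatives can be moved onto a Gaussian density with $O(1)$ smoothing parameter rather than one of scale $\sin\alpha$. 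Making that composition-of-smoothings identity precise—rather than brute-force differentiating and tracking cancellations—is the cleanest way around the obstacle, and it reduces the whole lemma to Lemma~\ref{lm:GaussSm} plus the trivial chain-rule scaling.
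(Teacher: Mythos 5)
Your proof of~\eqref{eq:DerUalpha} is correct and essentially the same as the paper's: the paper factors $\Slep_\alpha f = \Normal_{\tan\alpha} f_\alpha$ with $f_\alpha(w) := f(w\cos\alpha)$ (scale inside, smooth outside), while you factor $\Slep_\alpha f = (\Normal_{\sin\alpha} f)\circ A_\alpha$ (smooth inside, scale outside); both routes invoke Lemma~\ref{lm:GaussSm} once together with the elementary scaling bound $M_r(g\circ A_\alpha) \le \cos^r\alpha\,M_r(g)$, and deliver the same constant.

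For~\eqref{eq:NDerUalpha}, however, the composition identity you write down is wrong, and taken literally it does not close the argument. You claim, in effect, that
\[
\Normal_1 \Slep_\alpha f \;=\; \Normal_{(\sin^2\alpha + \cos^2\alpha)^{1/2}}\bigl(f\circ A_\alpha\bigr) \;=\; \Normal_1\bigl(f\circ A_\alpha\bigr),
\]
but a direct computation gives $\Normal_1\bigl(f\circ A_\alpha\bigr)(w) = \Ee\,f\bigl(w\cos\alpha + Z\cos\alpha\bigr)$, whereas $\Normal_1\Slep_\alpha f(w) = \Ee\,f\bigl(w\cos\alpha + Z_1\cos\alpha + Z_2\sin\alpha\bigr) = \Ee\,f\bigl(w\cos\alpha + Z\bigr)$. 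The two differ by a factor of $\cos\alpha$ in the noise. With your identity, Lemma~\ref{lm:GaussSm} at smoothing parameter $1$ plus the chain-rule scaling of $f\circ A_\alpha$ gives only $M_{r+s}\bigl(\Normal_1(f\circ A_\alpha)\bigr) \le c_s\,\cos^r\alpha\,M_r(f)$, which misses $s$ of the $\cos\alpha$ factors you need. The correct identity is either $\Normal_1\Slep_\alpha f = (\Normal_1 f)\circ A_\alpha$ (scaling \emph{outside} the smoothing, which is what your ``picking up $(\cos\alpha)^{r+s}$ from the $r+s$ scalings'' actually requires) or, equivalently, $\Normal_1\Slep_\alpha f = \Normal_{1/\cos\alpha} f_\alpha$ as the paper writes; from either one, $M_{r+s}$ of the composition is $\le c_s\cos^{r+s}\alpha\,M_r(f)$, and Remark~\ref{rk:Rademacher} at $w = 0$ completes the bound. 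So the gap is a concrete misplacement of the scaling: your formula puts $A_\alpha$ inside the $\Normal_1$ and compensates with the wrong smoothing parameter, which costs the $\cos^s\alpha$ factor. The preceding display you write (with an $(s+1)$-fold derivative of $\phi_d$ and an unspecified ``correction'') is likewise off and best discarded in favour of the clean composition identity.
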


\begin{proof}
Letting $ f_\alpha(w) := f(w \cos \alpha) $, observe that
$ \Slep_\alpha f = \Normal_{\tan \alpha} f_\alpha $. By Lemma~\ref{lm:GaussSm},
we have $ M_{r+s}(\Slep_\alpha f) \le c_s \, M_r(f_\alpha) \cot^s \alpha
\le c_s \, M_r(f) \cos^r \alpha \cot^s \alpha $, proving \eqref{eq:DerUalpha}.
To derive \eqref{eq:NDerUalpha}, write
$ \Normal \, \nabla^{r+s} \Slep_\alpha f = \Normal_1 \, \nabla^{r+s} \Slep_\alpha f(0) =
\nabla^{r+s} \, \Normal_1 \Slep_\alpha f(0) $
and observe that $ \Normal_1 \Slep_\alpha f = \Normal_1 \Normal_{\tan \alpha} f_\alpha
= \Normal_{1/\cos \alpha} f_\alpha $. Again by Lemma~\ref{lm:GaussSm}, we have
$ M_{r+s}(\Normal_1 \Slep_\alpha f) \le c_s \, M_r(f_\alpha) \cos^s \alpha $\newline
$ \le c_s \, M_r(f) \cos^{r+s} \alpha $.
Combining with Remark~\ref{rk:Rademacher}, we obtain \eqref{eq:DerUalpha}.
\end{proof}

\noindent
Now we gradually turn to main result of this subsection, Lemma~\ref{lm:zero:EStein}. We first need
some results concerning finiteness of certain integrals.

\begin{lemma}
\label{lm:IntSteinFin}
Let $ r \in \NN $. Suppose that $ \Ee |W|^r < \infty $ and take a function
$ f \Colon \RR^d \to \RR $ with $ M_r(f) < \infty $. Then:
\begin{enumerate}
\refstepcounter{enumi}\refitem{$(\theenumi)$}\label{lm:IntSteinFin:f}
If $ r \ge 2 $, then $ \Ee \bigl| \Stein f(W) \bigr| < \infty $.
\refstepcounter{enumi}\refitem{$(\theenumi)$}\label{lm:IntSteinFin:Uaf}
$ \int_0^{\pi/2} \Ee \bigl| \Stein \Slep_\alpha f(W) \bigr| \tan \alpha \,\dl \alpha < \infty $
(this statement applies for either $ r \in \NN $).
\end{enumerate}
\end{lemma}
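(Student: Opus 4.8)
The plan is to bound $\Ee|\Stein f(W)|$ and the integral $\int_0^{\pi/2}\Ee|\Stein\Slep_\alpha f(W)|\tan\alpha\,\dl\alpha$ by decomposing the Stein operator $\Stein f(w)=\Delta f(w)-\scalp{\nabla f(w)}{w}$ into its two pieces and estimating each using the growth bounds guaranteed by $M_r(f)<\infty$ together with the hypothesis $\Ee|W|^r<\infty$. For part~\ref{lm:IntSteinFin:f}, note that $\Delta f(w)$ is the trace of $\nabla^2 f(w)$, which is a $2$-tensor of injective norm at most $M_2(f)$ (by Remark~\ref{rk:Rademacher}, valid since $r\ge 2$); since the trace is controlled by a constant times the injective norm in fixed dimension $d$, we get $|\Delta f(w)|\le C_d M_2(f)$, a bound uniform in $w$. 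For the drift term, Remark~\ref{rk:MBd} applied to $\nabla f$ (which has $M_{r-1}(\nabla f)\le$ something controlled, or more directly: $|\nabla f(w)|\le |\nabla f(0)|+M_2(f)\,|w|$ when $r=2$, and $|\nabla f(w)|\le C+D|w|^{r-1}$ in general by Remark~\ref{rk:MBd}) yields $|\scalp{\nabla f(w)}{w}|\le C|w|+D|w|^{r}$, whose expectation is finite because $\Ee|W|^r<\infty$ (and hence $\Ee|W|<\infty$). Combining the two estimates gives $\Ee|\Stein f(W)|<\infty$.

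\textbf{The interpolated case.} For part~\ref{lm:IntSteinFin:Uaf}, the idea is to apply the same decomposition to $\Slep_\alpha f$ and use Lemma~\ref{lm:DerUalpha} to track how the relevant derivative norms blow up as $\alpha\downarrow 0$. Write $\Stein\Slep_\alpha f(W)=\Delta\Slep_\alpha f(W)-\scalp{\nabla\Slep_\alpha f(W)}{W}$. For the Laplacian term, $|\Delta\Slep_\alpha f(w)|\le C_d\,|\nabla^2\Slep_\alpha f(w)|_\vee\le C_d\,M_2(\Slep_\alpha f)$; if $r\ge 2$ this is $\le C_d c_0\cos^r\alpha\,M_r(f)$ by \eqref{eq:DerUalpha} with $s=0$, which is bounded uniformly, while if $r=1$ we instead use $s=1$ to get $M_2(\Slep_\alpha f)\le c_1\,\frac{\cos^2\alpha}{\sin\alpha}M_1(f)$, contributing an integrand of order $\tan\alpha\cdot\sin\alpha^{-1}=\cos^{-1}\alpha\cdot\dots$ — integrable near $0$ since $\tan\alpha/\sin\alpha=1/\cos\alpha$ stays bounded there. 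For the drift term, I would bound $|\nabla\Slep_\alpha f(w)|$ by a first-order growth estimate: since $M_2(\Slep_\alpha f)<\infty$, we have $|\nabla\Slep_\alpha f(w)|\le|\nabla\Slep_\alpha f(0)|+M_2(\Slep_\alpha f)|w|$, and $|\nabla\Slep_\alpha f(0)|=|\Normal\nabla\Slep_\alpha f(0)|\cdot(\text{something})$ — more carefully one uses \eqref{eq:NDerUalpha} with $r=1,s=0$ or controls $\nabla\Slep_\alpha f(0)$ directly via the definition \eqref{eq:Ualpha}. Then $\Ee|\scalp{\nabla\Slep_\alpha f(W)}{W}|\le \Ee[|\nabla\Slep_\alpha f(0)|\,|W|]+M_2(\Slep_\alpha f)\,\Ee|W|^2$, and again the $\alpha$-dependence is $\cos^r\alpha$ (bounded) or at worst $\cos^{r+1}\alpha/\sin\alpha$ when $r=1$, giving an integrand $\lesssim \tan\alpha/\sin\alpha$ near zero, which integrates.

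\textbf{Main obstacle.} The delicate point is the behaviour as $\alpha\downarrow 0$ in the case $r=1$: here $\Slep_\alpha f$ is genuinely only Lipschitz in the limit, so its second derivative norm $M_2(\Slep_\alpha f)$ does blow up like $1/\sin\alpha$, and one must check that this is exactly compensated by the factor $\tan\alpha$ in the integrand so that $\int_0 \frac{\cos^2\alpha}{\sin\alpha}\tan\alpha\,\dl\alpha=\int_0\frac{\cos\alpha\sin\alpha\cos\alpha}{\sin\alpha}\cdot\frac{1}{\cos\alpha}\,\dl\alpha$ stays finite — the powers of $\sin\alpha$ must cancel, leaving only positive powers of $\cos\alpha$ which are harmless. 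A secondary technical nuisance is that $\Ee|W|^r<\infty$ only directly controls the $r$-th moment, whereas the drift term produces a moment of order up to $r$ exactly (from $|w|\cdot|w|^{r-1}$), so one must be careful that the growth exponent of $\nabla\Slep_\alpha f$ is genuinely $r-1$ and not larger; this follows from Remark~\ref{rk:MBd} applied at the right order. I would also note that behaviour near $\alpha=\pi/2$ is trivial since $\tan\alpha$ is integrable there against the bounded smoothed derivatives (indeed $\cos^r\alpha\to 0$), so all the work is genuinely at the lower endpoint.
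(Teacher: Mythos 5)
The key gap is in your treatment of the Laplacian term for $r \ge 3$. You assert $|\Delta f(w)| \le C_d \, M_2(f)$ uniformly in $w$, citing Remark~\ref{rk:Rademacher}, but that remark only controls the $r$-th derivative when $M_r(f) < \infty$: for $r = 3$ (say) it bounds $|\nabla^3 f|_\vee$ by $M_3(f)$, while $M_2(f)$ may well be infinite (e.g.\ $f$ with $\nabla^2 f$ growing linearly). The same slip recurs in part~\ref{lm:IntSteinFin:Uaf}: you write $M_2(\Slep_\alpha f) \le c_0 \cos^r\alpha \, M_r(f)$ ``by \eqref{eq:DerUalpha} with $s = 0$,'' but that inequality controls $M_r(\Slep_\alpha f)$, not $M_2(\Slep_\alpha f)$; Lemma~\ref{lm:GaussSm} and \eqref{eq:DerUalpha} only propagate information about derivatives of order $\ge r$, and for $r \ge 3$ the quantity $M_2(\Slep_\alpha f)$ can be infinite even for $\alpha > 0$.

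What is needed instead (and what the paper does) is a \emph{polynomial growth} estimate for lower-order derivatives: for each $s = 0, 1, \ldots, r$ there are constants $C_{s,r}, D_{s,r}$ with $|\nabla^s f(x)|_\vee \le C_{s,r} + D_{s,r} |x|^{r-s}$ (obtained from $M_r(f) < \infty$ via Remark~\ref{rk:MBd} applied to the scalar functions $x \mapsto \scalp{\nabla^s f(x)}{u_1 \otimes \cdots \otimes u_s}$). This yields $|\Delta f(w)| \lesssim 1 + |w|^{r-2}$, which together with $|\scalp{\nabla f(w)}{w}| \lesssim |w| + |w|^r$ and $\Ee|W|^r < \infty$ gives part~\ref{lm:IntSteinFin:f}. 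For part~\ref{lm:IntSteinFin:Uaf} with $r \ge 2$ one then differentiates under the integral sign to get $\nabla^s \Slep_\alpha f(w) = \Ee[\nabla^s f(w\cos\alpha + Z\sin\alpha)]\cos^s\alpha$, expands $|w\cos\alpha + z\sin\alpha|^{r-s}$ binomially, and checks that each term, after multiplication by $\tan\alpha$, is integrable on $(0, \pi/2)$. You do correctly identify the $r = 1$ mechanism (the blow-up $M_2(\Slep_\alpha f) \lesssim \cos^2\alpha/\sin\alpha$ cancels against $\tan\alpha$ to leave $\cos\alpha$), and your decomposition of $\Stein$ and appeal to Lemma~\ref{lm:DerUalpha} are the right ingredients; the argument just needs to be carried through with the growth estimates rather than the spurious uniform bound $M_2(f)$ for $r \ge 3$.
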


\begin{proof}
Clearly, $ \bigl| \Stein f(w) \bigr| \le d \, \bigl| \nabla^2 f(w) \bigr|_\vee +
|\nabla f(w)| \, |w| $ for all $ f $ with $ M_2(f) < \infty $. Let $ \tilde f_\alpha := \Slep_\alpha f $.
First, take $ r = 1 $. By Lemma~\ref{lm:DerUalpha},
we have $ M_1(\tilde f_\alpha) \le M_1(f) \cos \alpha $ and
$ M_2(\tilde f_\alpha) \le c_1 \, M_1(f) \, \frac{\cos^2 \alpha}{\sin \alpha} $.
As a result, we have
\[
 \Ee \bigl| \nabla^2 \tilde f_\alpha(W) \bigr|_\vee
 \le
 c_1 \, M_1(f) \, \frac{\cos^2 \alpha}{\sin \alpha}
 \kern 1.5em \text{and} \kern 1.5em
 \Ee \bigl[ |\nabla \tilde f_\alpha(W)| \, |W| \bigr]
 \le
 M_1(f) \Ee|W| \cos \alpha
 \, .
\]
Multiplying by $ \tan \alpha $ and integrating, we obtain the desired finiteness.

Now take $ r \ge 2 $. Observe that for each $ s = 0, 1, \ldots, r $, there exist constants
$ C_{s,r} $ and $ D_{s,r} $, such that $ |\nabla^s f(x)|_\vee \le C_{s,r} + D_{s,r} |x|^{r-s} $
for all $ x \in \RR^d $. Thus, $ \Ee \bigl| \Stein f(W) \bigr| \le
d \, C_{2,r} + \, D_{2,r} \Ee |W|^{r-2} + C_{1,r} \Ee |W| + D_{1,r} \Ee |W|^r < \infty $.

Next, similarly as in the proof of Lemma~\ref{lm:DerUalpha}, differentiation
under the integration sign gives $ \nabla^s \tilde f_\alpha(w) = \Ee \bigl[ \nabla^s f(w \cos \alpha +
Z \sin \alpha) \bigr] \cos^s \alpha $, where $ Z $ is a standard $ d $-variate normal random
vector. Therefore,
\[
 \bigl| \nabla^s \tilde f_\alpha(w) \bigr|_\vee
 \le
 C_{s,r} \cos^s \alpha
   +
 D_{s,r} \sum_{k=0}^{r-s} \binom{n-s}{k} |w|^k \Ee |Z|^{r-s-k} \cos^{s+k} \alpha \sin^{r-s-k} \alpha
 \, .
\]
Replacing $ w $ with $ W $ and taking expectation, we obtain in particular
\begin{align*}
 \Ee \bigl| \nabla^2 \tilde f_\alpha(W) \bigr|_\vee
 &\le
 C_{2,r} \cos^2 \alpha
   +
 D_{2,r} \sum_{k=0}^{r-2} \binom{n-2}{k} \Ee |W|^k \Ee |Z|^{r-2-k} \cos^{k+2} \alpha \sin^{r-k-2} \alpha
 \, ,
\\
 \Ee \bigl[ |\nabla \tilde f_\alpha(W)| \, |W| \bigr]
 &\le
 C_{1,r} \cos \alpha
   +
 D_{1,r} \sum_{k=0}^{r-1} \binom{n-1}{k} \Ee |W|^{k+1} \Ee |Z|^{r-k-1} \cos^{k+1} \alpha \sin^{r-k-1} \alpha
 \, .
\end{align*}
Multiplying by $ \tan \alpha $ and integrating, we again obtain the desired finiteness.
\end{proof}

\begin{lemma}
\label{lm:zero:EW3fin}
Assume \ref{ass:zeroalt} and \ref{ass:prox}. If the quantity $ \breve \beta_3 $ defined in \eqref{eq:betazero-3}
is finite, then
\newline
$ \int_\Xi \Ee_\xi |V_\xi| \,|\breve \mu|_\wedge(\dl \xi) $
and $ \Ee |W|^3 $ are finite, too.
\end{lemma}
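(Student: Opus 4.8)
The plan is to prove the two finiteness assertions in sequence, the second relying on the first. For $ \int_\Xi \Ee_\xi |V_\xi| \, |\breve\mu|_\wedge(\dl\xi) $ I would start from Lemma~\ref{lm:proxE}, which holds under Assumption~\ref{ass:prox} and gives $ \Ee_\xi |V_\xi| \le \Ee|W| + |\nu_\xi|(\Xi) = \Ee|W| + \beta_1^{(\xi)} $ for every $ \xi \in \Xi $. Integrating this bound against $ |\breve\mu|_\wedge $ and recalling \eqref{eq:betazero-3} yields
\[
 \int_\Xi \Ee_\xi |V_\xi| \, |\breve\mu|_\wedge(\dl\xi)
 \le
 \Ee|W| \cdot |\breve\mu|_\wedge(\Xi) + \breve\beta_3 \, .
\]
On the right, $ \Ee|W| < \infty $ is part of Assumption~\ref{ass:prox}, $ \breve\beta_3 < \infty $ is the hypothesis of the lemma, and $ |\breve\mu|_\wedge(\Xi) < \infty $ because $ \breve\mu $ takes values in the finite-dimensional space $ \RR^d \otimes \RR^d $ and is therefore a measure of finite total variation. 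Hence the first integral is finite.

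For $ \Ee|W|^3 $ I would invoke Proposition~\ref{pr:zeroaltStr} with the non-decreasing function $ h(t) := 2t $ and the vector field $ F(w) := |w| \, w $. The point is that $ \scalp{F(w)}{w} = |w|^3 $, so the conclusion $ \Ee \bigl| \scalp{F(W)}{W} \bigr| < \infty $ of that proposition is literally $ \Ee|W|^3 < \infty $. To apply it one must check its two hypotheses. Condition~\eqref{eq:zeroaltStr:cond} reads $ \int_\Xi \Ee_\xi \bigl[ h(|V_\xi|) \bigr] \, |\breve\mu|_\wedge(\dl\xi) = 2 \int_\Xi \Ee_\xi |V_\xi| \, |\breve\mu|_\wedge(\dl\xi) $, which is finite by the first part already proved.

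It then remains to verify the regularity hypothesis, namely that $ F $ is continuously differentiable with $ \bigl| \nabla F(w) \bigr|_\vee \le h(|w|) $ for all $ w $. For $ w \ne 0 $ one computes $ \nabla F(w) = |w|^{-1} \, w \otimes w + |w| \, \Id_d $; since $ |F(w)| = |w|^2 $ and both summands vanish as $ w \to 0 $, this shows $ F $ is of class $ C^1 $ on all of $ \RR^d $ with $ \nabla F(0) = 0 $. Using $ |w \otimes w|_\vee = |w|^2 $, $ |\Id_d|_\vee = 1 $ and the triangle inequality for the injective norm, one gets $ \bigl| \nabla F(w) \bigr|_\vee \le |w| + |w| = 2|w| = h(|w|) $, trivially also at $ w = 0 $. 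Thus Proposition~\ref{pr:zeroaltStr} applies and gives $ \Ee|W|^3 = \Ee \bigl| \scalp{F(W)}{W} \bigr| < \infty $, completing the argument.

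I expect the only mildly delicate point to be the regularity check for $ F(w) = |w| \, w $ near the origin together with the bookkeeping of the injective-norm bound; everything else is a direct appeal to Lemma~\ref{lm:proxE} and Proposition~\ref{pr:zeroaltStr}. It is also essential to run the two parts strictly in this order, since the integrability condition~\eqref{eq:zeroaltStr:cond} needed to set up the second part is precisely the assertion established in the first.
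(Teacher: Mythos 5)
Your proof is correct and follows essentially the same route as the paper: Lemma~\ref{lm:proxE} integrated against $|\breve\mu|_\wedge$ gives the first finiteness, and Proposition~\ref{pr:zeroaltStr} with $F(w)=|w|\,w$ and $h(t)=2t$ gives $\Ee|W|^3<\infty$. The additional details you supply (the finiteness of $|\breve\mu|_\wedge(\Xi)$, the $C^1$ check of $F$ at the origin, and the injective-norm estimate $|\nabla F(w)|_\vee\le 2|w|$) are all sound and merely make explicit what the paper leaves to the reader.
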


\begin{proof}
The finiteness of
$ \int_\Xi \Ee_\xi |V_\xi| \,|\breve \mu|_\wedge(\dl \xi) $
follows from the finiteness of $ \breve \beta_3 $ by Lemma~\ref{lm:proxE}.
To derive the finiteness of $ \Ee |W|^3 $, apply Proposition~\ref{pr:zeroaltStr} with $ F(w) = |w| w $: clearly,
\newline
$ |\scalp{F(w)}{w}| = |w|^3 $. Noting that $ \nabla F(w) = \frac{w \otimes w}{|w|} + |w| \Id_d $,
we can set $ h(t) := 2 t $, so that the finiteness of
$ \int_\Xi \Ee_\xi |V_\xi| \,|\breve \mu|_\wedge(\dl \xi) $
implies \eqref{eq:zeroaltStr:cond} and the result follows.
\end{proof}

\begin{lemma}
\label{lm:zero:EStein}
Assume \ref{ass:St}, \ref{ass:zeroalt} and \ref{ass:prox}, and recall \eqref{eq:betasize1-2},
\eqref{eq:betasize2-3}, \eqref{eq:betazero-3} and \eqref{eq:betazero-23}. Take a three times
differentiable function $ f \Colon \RR^d \to \RR $. If either $ M_2(f) < \infty $,
or $ \breve \beta_3 < \infty $ and $ M_3(f) < \infty $, then
\begin{equation}
\label{eq:zero:EStein:beta23}
 \bigl| \Ee \bigl[ \Stein f(W) \bigr] \bigr| \le \breve \beta_{23} \bigl( 2 \, M_2(f), \, M_3(f) \bigr) \, .
\end{equation}
Moreover, if $ f $ is four times continuously differentiable with
$ M_2(f) < \infty $ and $ M_3(f) < \infty $, then
\begin{equation}
\label{eq:zero:EStein:beta234}
 \bigl| \Ee \bigl[ \Stein f(W) \bigr] \bigr|
 \le
 \int_\Xi \min \Bigl\{
   2 \, M_2(f) \, , \>
   \beta_1^{(\xi)} \, \bigl| \Ee \bigl[ \nabla^3 f(W) \bigr] \bigr|_\vee
     +
   \beta_2^{(\xi)} \, M_4(f)
 \Bigr\} \, |\breve \mu|_\wedge(\dl \xi)
 \, .
\end{equation}
Both bounds apply under the convention $ 0 \cdot \infty = \infty \cdot 0 = 0 $.
\end{lemma}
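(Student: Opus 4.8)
The plan is to start from the defining relation \eqref{eq:zero} for the measure $\breve\mu$ and use it to express $\Ee[\scalp{\nabla f(W)}{W}]$, so that the Stein expectation $\Ee[\Stein f(W)] = \Ee[\Delta f(W)] - \Ee[\scalp{\nabla f(W)}{W}]$ can be rewritten. Applying \eqref{eq:zero} with $\nabla f$ in place of the scalar function (more precisely, applying \ref{ass:zeroalt} with $F = \nabla f$, which is legitimate after a truncation argument via Proposition~\ref{pr:zeroaltStr} — this is where the finiteness hypotheses $M_2(f)<\infty$ or $\breve\beta_3<\infty$, $M_3(f)<\infty$ enter, together with Lemmas~\ref{lm:zero:EW3fin} and \ref{lm:IntSteinFin}), we get
\[
 \Ee \bigl[ \scalp{\nabla f(W)}{W} \bigr]
 =
 \int_\Xi \Scalp[big]{\Ee_\xi \bigl[ \nabla^2 f(V_\xi) \bigr]}{\breve \mu(\dl \xi)}
 \, .
\]
On the other hand, $\Ee[\Delta f(W)] = \int_\Xi \Scalp[big]{\breve\mu(\dl\xi)\,\Id_d}{\Id_d}$-type term is not quite right; instead one uses $\breve\mu(\Xi) = \Id_d$ (Remark~\ref{rk:zerovar} together with \ref{ass:St}) to write $\Ee[\Delta f(W)] = \int_\Xi \scalp{\Id_d : \Ee[\nabla^2 f(W)]}{\cdot}$ as an integral against $|\breve\mu|_\wedge$. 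The upshot is the identity
\[
 \Ee \bigl[ \Stein f(W) \bigr]
 =
 \int_\Xi \Scalp[big]{\Ee \bigl[ \nabla^2 f(W) \bigr] - \Ee_\xi \bigl[ \nabla^2 f(V_\xi) \bigr]}{\breve \mu(\dl \xi)}
 \, ,
\]
reducing everything to bounding the tensor difference $\Ee[\nabla^2 f(W)] - \Ee_\xi[\nabla^2 f(V_\xi)]$ for each $\xi$ and then integrating against $|\breve\mu|_\wedge$.

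Next I would bound the integrand pointwise. Writing $D_\xi g := \Ee[g(W)] - \Ee_\xi[g(V_\xi)]$ as in Lemma~\ref{lm:proxBd}, I apply that lemma componentwise to the second derivative: testing against $u^{\otimes 2}$ and using \eqref{eq:proxBd0}, \eqref{eq:proxBd1}, \eqref{eq:proxBd2} with the scalar functions $w \mapsto \scalp{\nabla^2 f(w)}{u \otimes u}$ (whose relevant $M_r$ are controlled by $M_{r+2}(f)$), then taking the supremum over $|u| \le 1$ and invoking Proposition~\ref{pr:InjSym} to pass back to the injective norm. This yields, for \eqref{eq:zero:EStein:beta23}, the two estimates $\bigl|\Ee[\nabla^2 f(W)] - \Ee_\xi[\nabla^2 f(V_\xi)]\bigr|_\vee \le 2\,M_2(f)$ and $\le \beta_1^{(\xi)}\,M_3(f)$, so the integrand is at most $\min\{2M_2(f), M_3(f)\,\beta_1^{(\xi)}\}$; integrating against $|\breve\mu|_\wedge$ and recalling the definition \eqref{eq:betazero-23} of $\breve\beta_{23}$ gives \eqref{eq:zero:EStein:beta23}. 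For \eqref{eq:zero:EStein:beta234}, when $f$ is four times continuously differentiable I instead use the sharper \eqref{eq:proxBd2}: $\bigl|\Ee[\nabla^2 f(W)] - \Ee_\xi[\nabla^2 f(V_\xi)]\bigr|_\vee \le \beta_1^{(\xi)}\,\bigl|\Ee[\nabla^3 f(W)]\bigr|_\vee + \beta_2^{(\xi)}\,M_4(f)$, combined again with the trivial bound $2M_2(f)$, and integrate.

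The main obstacle I anticipate is not the algebra but the analytic bookkeeping needed to legitimately apply \ref{ass:zeroalt} to $F = \nabla f$: this map need not have bounded derivative, so one must approximate via the truncations $F_n$ of Proposition~\ref{pr:zeroaltStr} and justify the passage to the limit, which requires an a priori integrability bound of the form $\int_\Xi \Ee_\xi[h(|V_\xi|)]\,|\breve\mu|_\wedge(\dl\xi) < \infty$ for a suitable nondecreasing $h$ dominating $|\nabla^2 f|_\vee$. Under $M_2(f)<\infty$ one can take $h$ constant (so the hypothesis \eqref{eq:zeroaltStr:cond} is automatic since $|\breve\mu|_\wedge(\Xi) = |\Id_d|_\wedge = d < \infty$ by Remark~\ref{rk:zerovar}), while under the alternative hypothesis $\breve\beta_3<\infty$, $M_3(f)<\infty$ the derivative $\nabla^2 f$ grows at most linearly and Lemma~\ref{lm:zero:EW3fin} supplies $\int_\Xi \Ee_\xi|V_\xi|\,|\breve\mu|_\wedge(\dl\xi)<\infty$, which is exactly what is needed. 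A parallel but simpler truncation handles the term $\Ee[\Delta f(W)]$ and the finiteness of $\Ee|\Stein f(W)|$ itself, for which Lemma~\ref{lm:IntSteinFin}\ref{lm:IntSteinFin:f} (after checking $\Ee|W|^r<\infty$ for the relevant $r$, again via Remark~\ref{rk:zerovar} or Lemma~\ref{lm:zero:EW3fin}) does the job. Once these integrability issues are dispatched, the convention $0\cdot\infty = \infty\cdot 0 = 0$ takes care of the degenerate cases $\beta_1^{(\xi)} = 0$ exactly as in the proof of Lemma~\ref{lm:proxBd}.
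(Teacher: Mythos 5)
Your proposal is correct and follows essentially the same route as the paper: apply \ref{ass:zeroalt} with $F = \nabla f$ (directly when $M_2(f) < \infty$, via Proposition~\ref{pr:zeroaltStr} when $\breve\beta_3, M_3(f) < \infty$), use $\breve\mu(\Xi) = \Id_d$ to rewrite $\Ee[\Delta f(W)]$ and obtain $\Ee[\Stein f(W)] = \int_\Xi \scalp{D_\xi \nabla^2 f}{\breve\mu(\dl\xi)}$, then bound $|D_\xi \nabla^2 f|_\vee$ componentwise via Lemma~\ref{lm:proxBd} and integrate against $|\breve\mu|_\wedge$; your use of diagonal directions $u^{\otimes 2}$ together with Proposition~\ref{pr:InjSym} is a cosmetic variant of the paper's test against $u \otimes v$. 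One small inaccuracy in your integrability aside: $|\breve\mu|_\wedge(\Xi)$ is the total variation of $\breve\mu$ over $\Xi$, not $|\breve\mu(\Xi)|_\wedge = |\Id_d|_\wedge$, so it need not equal $d$; its finiteness is nevertheless automatic because total variations of finite-dimensional vector measures are finite — and in the case $M_2(f) < \infty$ one does not need Proposition~\ref{pr:zeroaltStr} at all, since $\nabla f$ then has bounded derivative and \ref{ass:zeroalt} applies verbatim.
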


\begin{remark}
By Part~\ref{lm:IntSteinFin:f} of Lemma~\ref{lm:IntSteinFin} along with Lemma~\ref{lm:zero:EW3fin},
$ \Ee \bigl| \Stein f(W) \bigr| $ is finite in either case.
\end{remark}

\begin{proof}[Proof of Lemma~\ref{lm:zero:EStein}]
First, observe that \eqref{eq:zeroalt} applies with $ F = \nabla f $ under the either of the
specified conditions. If $ M_2(f) < \infty $, this is true by Assumption~\ref{ass:zeroalt}
itself. If $ \breve \beta_3 < \infty $ and $ M_3(f) < \infty $, we can apply Proposition~\ref{pr:zeroaltStr}.
The latter applies if we can estimate $ |\nabla^2 f(w)|_\vee \le h(|w|) $, where $ h $ is a non-decreasing function with
$ \int_\Xi \Ee_\xi \bigl[ h(|V_\xi|) \bigr]
|\breve \mu|_\wedge(\dl \xi) < \infty $. As $ |\nabla^2 f(w)|_\vee \le |\nabla^2 f(0)|_\vee + M_3(f) \, |w| $,
it suffices to see that
$ \int_\Xi \Ee_\xi |V_\xi| \,|\breve \mu|_\wedge(\dl \xi) < \infty $.
However, this is true by Lemma~\ref{lm:zero:EW3fin}.

Recalling the identification $ u \otimes v \equiv u v^\Trans $
from Subsection~\ref{ssc:Ten} and Remark~\ref{rk:zerovar}, and applying \ref{ass:St}, consider
\begin{equation}
\label{eq:zero:EStein:Delta}
 \Delta f(w)
 =
 \Scalp[big]{\nabla^2 f(w)}{\Id_d}
 =
 \Scalp[big]{\nabla^2 f(w)}{\breve \mu(\Xi)}
 =
 \int_\Xi \scalp{\nabla^2 f(w)}{\breve \mu(\dl \xi)}
 \, .
\end{equation}
Taking expectation and applying \eqref{eq:zeroalt}, we obtain
\begin{equation}
\label{eq:zero:EStein:D}
 \Ee \bigl[ \Stein f(W) \bigr] = \int_\Xi \scalp{D_\xi \nabla^2 f}{\breve \mu(\dl \xi)} \, ,
\end{equation}
where $ D_\xi $ is as in Lemma~\ref{lm:proxE}, i.~e.,
$ D_\xi g := \Ee \bigl[ g(W) \bigr] - \Ee_\xi \bigl[ g(V_\xi) \bigr] $.

Let $ u, v \in \RR^d $ and suppose that $ M_2(f) < \infty $.
The bounds \eqref{eq:proxBd0} and \eqref{eq:proxBd1} applied to the function
$ f_{u,v}(w) := \scalp{\nabla^2 f}{u \otimes v} $ yield $
 \bigl| \scalp{D_\xi \nabla^2 f}{u \otimes v} \bigr|
 \le
 \min \bigl\{ 2 \, M_2(f), M_3(f) \, \beta_1^{(\xi)} \bigr\}
$. Taking the supremum over $ |u|, |v| \le 1 $, we find that
$ |D_\xi \nabla^2 f|_\vee \le \min \bigl\{ 2 \, M_2(f), M_3(f) \, \beta_1^{(\xi)} \bigr\} $.
Plugging into \eqref{eq:zero:EStein:D} and applying Proposition~\ref{pr:IntVectMeasEst},
we obtain \eqref{eq:zero:EStein:beta23}. By a similar argument, \eqref{eq:zero:EStein:beta234}
can be derived from \eqref{eq:proxBd0} combined with \eqref{eq:proxBd2}.
\end{proof}

\subsection{Proofs of Theorems~\ref{th:CLT:size} and \ref{th:CLT:zero}}
\label{ssc:Prf}

\begin{proof}[Proof of Theorem~\ref{th:CLT:zero}]
First, we verify that under the given assumptions, first, $ \Ee \bigl| f(W) \bigr| $ and
$ \Normal |f| $ are finite, and, second, either \eqref{eq:SteinSlep} is valid
or the result is trivially true. The finiteness of $ \Normal |f| $ follows from
Remark~\ref{rk:MBd} and the fact that $ \int_{\RR^d} |z|^r \phi_d(z) \,\dl z $ is
finite. Now if $ M_1(f) $ or $ M_2(f) $ is finite, then, by Lemma~\ref{lm:IntSteinFin},
$ \int_0^{\pi/2} \Ee \bigl| \Stein \Slep_\alpha f(W) \bigr| \tan \alpha \,\dl \alpha $
is also finite. Consequently, $ \Ee \bigl| f(W) \bigr| $ is finite and
\eqref{eq:SteinSlep} is valid (see the comment below \eqref{eq:SteinSlep}).

It remains to show that if $ M_3(f) < \infty $, then, first, $ \Ee \bigl| f(W) \bigr| $
is finite, and, second, either \eqref{eq:SteinSlep} is valid or \eqref{eq:CLT3Xi} is
trivially true. If $ M_3(f) = 0 $, then $ f $ is a quadratic
function, so that $ \Ee \bigl| f(W) \bigr| $ must be finite and
$ \Ee \bigl[ f(W) \bigr] = \Normal f $; as a result, \eqref{eq:CLT3Xi} is
trivially true. The latter is also trivially true if the right hand side is
infinite. However, if $ M_3(f) > 0 $ and the right hand side is finite,
then $ \breve \beta_3 $ is also finite. By Lemma~\ref{lm:zero:EW3fin},
$ \Ee |W|^3 $ must then be finite; by Lemma~\ref{lm:IntSteinFin},
$ \int_0^{\pi/2} \Ee \bigl| \Stein \Slep_\alpha f(W) \bigr| \tan \alpha \,\dl \alpha $
must be finite. As a result, $ \Ee \bigl| f(W) \bigr| $ is finite and
\eqref{eq:SteinSlep} is valid.

First, observe that $ \Ee \bigl| f(W) \bigr| $ and $ \Normal |f| $ are finite and
$ \Ee \bigl[ f(W) \bigr] = \Normal f $ if either of $ M_1(f) $, $ M_2(f) $ or $ M_3(f) $
vanishes. Thus, \eqref{eq:CLT3Xi}--\eqref{eq:CLT1Xi} are all trivially true in this case.
From now, assume that $ M_r(f) > 0 $ for all $ r \in \{ 1, 2, 3 \} $. In addition,
to prove either of the inequalities~\eqref{eq:CLT3Xi}--\eqref{eq:CLT1Xi}, we can assume
that the right hand side is finite.

Thus, in either case where we have not proved the result yet, we can estimate
\begin{equation}
\label{eq:SteinSlepEst}
 \bigl| \Ee \bigl[ \tilde f_\eps(W) \bigr] - \Normal f \bigr|
 \le
 \int_\eps^{\pi/2} \bigl| \Ee \bigl[ \Stein \tilde f_\alpha(W) \bigr] \bigr|
 \tan \alpha \,\dl \alpha
\end{equation}
for all $ 0 \le \eps \le \pi/2 $; here, $ \tilde f_\alpha := \Slep_\alpha f $.

In order to derive \eqref{eq:CLT3Xi}, an application of
the second part of \eqref{eq:zero:EStein:beta23} along with \eqref{eq:DerUalpha} gives
\[
 \bigl| \Ee \bigl[ f(W) \bigr] - \Normal f \bigr|
 \le
 \breve \beta_3 \int_0^{\pi/2} M_3(\tilde f_\alpha) \tan \alpha \,\dl \alpha
 \le
 \breve \beta_3 \, M_3(f) \int_0^{\pi/2} \cos^2 \alpha \sin \alpha \,\dl \alpha
 =
 \frac{\breve \beta_3}{3} \, M_3(f)
 \, .
\]
To derive \eqref{eq:CLT2Xi}, observe that
\begin{align*}
 \bigl| \Ee \bigl[ f(W) \bigr] - \Normal f \bigr|
 &\le
 \int_0^{\pi/2} \breve \beta_{23} \bigl( 2 \, M_2(\tilde f_\alpha), \, M_3(\tilde f_\alpha) \bigr) \tan \alpha \,\dl \alpha \\
 & \kern -1em \null =
 \int_\Xi
   \int_0^{\pi/2}
     \min \Bigl\{ 2 \, M_2(\tilde f_\alpha), \, M_3(\tilde f_\alpha) \, \beta_1^{(\xi)} \Bigr\} \,
   \tan \alpha \,\dl \alpha
 \, |\breve \mu|_\wedge(\dl \xi) \\
 & \kern -1em \null \le
 \int_\Xi \min \biggl\{
   2 \int_0^{\pi/2} M_2(\tilde f_\alpha) \tan \alpha \,\dl \alpha
 \, , \>
   \beta_1^{(\xi)} \int_0^{\pi/2} M_3(\tilde f_\alpha) \tan \alpha \,\dl \alpha
 \biggr\} |\breve \mu|_\wedge(\dl \xi)
 \, .
\end{align*}
Applying \eqref{eq:DerUalpha}, recalling \eqref{eq:betazero-23} and integrating, we obtain
\begin{align*}
 \bigl| \Ee \bigl[ f(W) \bigr] - \Normal f \bigr|
 &\le
 M_2(f)
 \int_\Xi \min \biggl\{
   2 \int_0^{\pi/2} \cos \alpha \sin \alpha \,\dl \alpha
 \, , \>
   c_1 \, \beta_1^{(\xi)} \int_0^{\pi/2} \cos^2 \alpha \,\dl \alpha
 \biggr\} \, |\breve \mu|_\wedge(\dl \xi) \\
 &=
 M_2(f)
 \int_\Xi \min \biggl\{
   1
 \, , \>
   \frac{\pi}{4} \, c_1 \, \beta_1^{(\xi)}
 \biggr\} \, |\breve \mu|_\wedge(\dl \xi) \\
 &=
 \breve \beta_{23} \left( 1, \, \frac{\sqrt{2 \pi}}{4} \right)
 M_2(f)
 \, .
\end{align*}
Finally, we turn to \eqref{eq:CLT1Xi}. The latter is trivially true if $ M_1(f) = 0 $,
so that we can assume that $ M_1(f) > 0 $. Define
\begin{equation}
\label{eq:delta}
 \delta := \sup \left\{
   \frac{\bigl| \Ee \bigl[ f(W) \bigr] - \Normal f \bigr|}{M_1(f)} \> \sth \> 0 < M_1(f) < \infty
 \right\} \, .
\end{equation}
We first prove that $ \delta $ is finite. To justify this, observe that
$ \bigl| \Ee \bigl[ f(W) \bigr] - f(0) \bigr| \le M_1(f) \Ee|W| $ and
$ \bigl| \Normal f - f(0) \bigr| \le M_1(f) \Ee|Z| $. Since $ \Ee |W| $ and
$ \Ee |Z| $ are both finite, $ \delta $ must be finite, too.

Now take $ f \Colon \RR^d \to \RR $ with $ 0 < M_1(f) < \infty $. We begin with
smoothing: let $ 0 < \eps < \pi/2 $ and take a standard $ d $-variate normal
random vector $ Z $, independent of $ W $. Observe that
\begin{align*}
 \bigl| \Ee \bigl[ \tilde f_\eps(W) \bigr] - \Ee \bigl[ f(W) \bigr] \bigr|
 &\le
 M_1(f) \Ee \bigl| (1 - \cos \eps) W + Z \sin \eps \bigr| \\
 &\le
 M_1(f) \sqrt{\Ee \bigl| (1 - \cos \eps) W + Z \sin \eps \bigr|^2} \\
 &\le
 M_1(f) \sqrt{(1 - \cos \eps)^2 \Ee |W|^2 + \Ee |Z|^2 \sin^2 \eps} \\
 &=
 2 \sqrt{d} \, M_1(f) \sin \frac{\eps}{2}
 \, .
\end{align*}
Consequently,
\begin{equation}
\label{eq:CLT:zero:EstSm}
 \bigl| \Ee \bigl[ f(W) \bigr] - \Normal f \bigr|
 \le
 \bigl| \Ee \bigl[ \tilde f_\eps(W) \bigr] - \Normal f \bigr|
   +
 2 \sqrt{d} \, M_1(f) \sin \frac{\eps}{2}
 \, .
\end{equation}
Combining \eqref{eq:SteinSlepEst} with \eqref{eq:zero:EStein:beta234}, we obtain
\begin{equation}
\label{eq:CLT:zero:beta***}
 \bigl| \Ee \bigl[ \tilde f_\eps(W) \bigr] - \Normal f \bigr|
 \le
 \int_\Xi R_\xi \, |\breve \mu|_\wedge(\dl \xi)
 \, ,
\end{equation}
where
\begin{equation}
\label{eq:CLT:zero:Dxi}
 R_\xi
 :=
 \int_\eps^{\pi/2} \min \Bigl\{
   2 \, M_2(\tilde f_\alpha) \, , \>
   \beta_1^{(\xi)} \, \bigl| \Ee \bigl[ \nabla^3 \tilde f_\alpha(W) \bigr] \bigr|_\vee
     +
   M_4(\tilde f_\alpha) \, \beta_2^{(\xi)}
 \Bigr\} \tan \alpha \,\dl \alpha
 \, .
\end{equation}
Now take $ u \in \RR^d $ and let $ \tilde f_{\alpha; u} := \scalp{\nabla^3 \tilde f_\alpha}{u^{\otimes 3}} $.
Applying \eqref{eq:DerUalpha}, we can estimate
\begin{equation*}
\label{eq:CLT:zero:nabla3W:dir}
 \Bigl| \Scalp[big]{\Ee \bigl[ \nabla^3 \tilde f_\alpha(W) \bigr]}{u^{\otimes 3}} \Bigr|
 =
 \bigl| \Ee \bigl[ \tilde f_{\alpha; u}(W) \bigr] \Bigr|
 \le
 M_3(\tilde f_{\alpha; u})
 \le
 M_3(\tilde f_\alpha) \, |u|^3
 \, .
\end{equation*}
However, by \eqref{eq:delta}, we can also estimate
\[
\label{eq:CLT:zero:err}
 \Bigl| \Scalp[big]{\Ee \bigl[ \nabla^3 \tilde f_\alpha(W) \bigr] - \Normal \, \nabla^3 \tilde f_\alpha}{u^{\otimes 3}} \Bigr|
 \le
 \delta \, M_1(\tilde f_{\alpha; u})
 \le
 \delta \, M_4(\tilde f_\alpha) \, |u|^3
\]
and, consequently,
\begin{equation}
\label{eq:CLT:zero:nabla3W:Diff}
 \Bigl| \Scalp[big]{\Ee \bigl[ \nabla^3 \tilde f_\alpha(W) \bigr]}{u^{\otimes 3}} \Bigr|
 \le
 \Bigl(
   \Bigl| \Scalp[big]{\Normal \, \nabla^3 \tilde f_\alpha}{u^{\otimes 3}} \Bigr|_\vee
     +
   \delta \, M_4(\tilde f_\alpha)
 \Bigr) |u|^3
 \, .
\end{equation}
Combining \eqref{eq:CLT:zero:nabla3W:dir} and \eqref{eq:CLT:zero:nabla3W:Diff},
taking the supremum over all $ u $ with $ |u| \le 1 $, and applying \eqref{eq:DerUalpha}
and \eqref{eq:NDerUalpha}, we obtain
\begin{align*}
 \Bigl| \Ee \bigl[ \nabla^3 \tilde f_\alpha(W) \bigr] \Bigr|_\vee
 &\le
 \min \Bigl\{ 
    M_3(\tilde f_\alpha), \,
    \bigl| \Normal \, \nabla^3 \tilde f_\alpha \bigr|_\vee + \delta \, M_4(\tilde f_\alpha)
 \Bigr\} \\
 &\le
 M_1(f) \left( 
   c_2 \cos^3 \alpha
     +
   \min \left\{
     c_2 \, \frac{\cos^3 \alpha}{\sin^2 \alpha}, \,
     c_3 \, \delta \, \frac{\cos^4 \alpha}{\sin^3 \alpha}
   \right\}
 \right) \, .
\end{align*}
Plugging into \eqref{eq:CLT:zero:Dxi}, and estimating $ M_2(\tilde f_\alpha) $ and $ M_4(\tilde f_\alpha) $ by
means of \eqref{eq:DerUalpha}, we find that
\begin{equation*}
\begin{split}
 R_\xi
 &\le M_1(f)
 \int_\eps^{\pi/2} \min \biggl\{
   2 \, c_1 \cos \alpha \, , \>
   \beta_1^{(\xi)} \left(
     c_2 \cos^2 \alpha \sin \alpha
       +
     \min \left\{
       c_2 \, \frac{\cos^2 \alpha}{\sin \alpha}, \,
       c_3 \, \delta \, \frac{\cos^3 \alpha}{\sin^2 \alpha}
     \right\}
   \right) \\
 & \kern 5em \null
     +
   c_3 \, \beta_2^{(\xi)} \, \frac{\cos^3 \alpha}{\sin^2 \alpha}
 \biggr\} \, \dl \alpha
\\
 &\le
 M_1(f) \min \Biggl\{ 2 \, c_1 \, , \>
   \frac{c_2}{3} \, \beta_1^{(\xi)}
     +
   \beta_1^{(\xi)} \int_\eps^{\pi/2} \min \biggl\{
     c_2 \, \frac{\cos^2 \alpha}{\sin \alpha}, \,
     c_3 \, \delta \, \frac{\cos^3 \alpha}{\sin^2 \alpha}
   \biggr\} \,\dl \alpha \\
 & \kern 5em \null
     +
   \int_0^{\pi/2} \min \biggl\{
     2 \, c_1, \, 
     \frac{c_3 \, \beta_2^{(\xi)}}{\sin^2 \alpha}
   \biggr\} \cos \alpha \,\dl \alpha
 \Biggr\}
 \, .
\end{split}
\end{equation*}
The second integral can be estimated as
\begin{equation}
\label{eq:circum}
 \int_0^{\pi/2} \min \biggl\{
   2 \, c_1, \, 
   \frac{c_3 \, \beta_2^{(\xi)}}{\sin^2 \alpha}
 \biggr\} \cos \alpha \,\dl \alpha
 \le
 \int_0^\infty \min \biggl\{
   2 \, c_1, \, 
   \frac{c_3 \, \beta_2^{(\xi)}}{s^2}
 \biggr\} \,\dl s
 =
 2 \sqrt{2 \, c_1 c_3 \, \beta_2^{(\xi)}} \, ,
\end{equation}
and the first one can be estimated as
\begin{equation*}
\begin{split}
 \int_\eps^{\pi/2} \min \biggl\{
   c_2 \, \frac{\cos^2 \alpha}{\sin \alpha}, \,
   c_3 \, \delta \, \frac{\cos^3 \alpha}{\sin^2 \alpha}
 \biggr\} \,\dl \alpha
 &\le
 \int_\eps^{\pi/2} \min \biggl\{
   \frac{c_2}{2 \sin \frac{\alpha}{2}}, \,
   \frac{c_3 \, \delta}{4 \sin^2 \frac{\alpha}{2}}
 \biggr\} \cos \frac{\alpha}{2} \,\dl \alpha
\\
 &\le
 \int_{\sin (\eps/2)}^\infty \min \biggl\{
   \frac{c_2}{s}, \,
   \frac{c_3 \, \delta}{2 s^2}
 \biggr\} \,\dl s
\\
 &\le
 \left( \int_{\sin (\eps/2)}^{(c_3 \delta)/(2 c_2)} \frac{c_2}{s} \,\dl s \right)_+
   +
 \int_{(c_3 \delta)/(2 c_2)}^\infty \frac{c_3 \, \delta}{2 s^2} \,\dl s
\\
 &=
 c_2 \left[ 1 + \left( \log \frac{c_3 \delta}{2 c_2 \sin \frac{\eps}{2}} \right)_+ \right]
 \, .
\end{split}
\end{equation*}
Collecting everything together, we obtain
\begin{equation*}
 R_\xi
 \le
 M_1(f) \min \Biggl\{
   2 \, c_1, \,
   c_2  \, \beta_1^{(\xi)} \left[
     \frac{4}{3} + \left( \log \frac{c_3 \delta}{2 c_2 \sin \frac{\eps}{2}} \right)_+
   \right]
     +
   2 \sqrt{2 \, c_1 c_3 \, \beta_2^{(\xi)}}
 \Biggr\}
 \, .
\end{equation*}
Combining with \eqref{eq:CLT:zero:EstSm} and \eqref{eq:CLT:zero:beta***}, and recalling \eqref{eq:betazero-234},
we estimate the error in the normal approximation as
\begin{equation*}
 \bigl| \Ee \bigl[ f(W) \bigr] - \Normal f \bigr|
 \le
 M_1(f) \Biggl[
   2 \sqrt{d} \sin \frac{\eps}{2} + \breve \beta_{234} \left(
     2 \, c_1, \>
     c_2 \left[ \frac{4}{3} + \left( \log \frac{c_3 \delta}{2 c_2 \sin \frac{\eps}{2}} \right)_+ \right] , \,
     2 \sqrt{2 \, c_1 c_3}
   \right)
 \Biggr]
 \, .
\end{equation*}
Taking the supremum over $ f $, dividing by $ M_1(f) $ and choosing $ \eps := 2 \arcsin \frac{\delta}{18 \sqrt{d}} $
leads to the estimate
\[
 \delta \le \frac{\delta}{9} + \breve \beta_{234} \left(
   2 \, c_1, \,
   c_2 \left[ \frac{4}{3} + \log \frac{9 c_3 \sqrt{d}}{2 c_2} \right] , \,
   2 \sqrt{2 \, c_1 c_3}
 \right)
\]
Resolving the latter and recalling that $ \delta $ is finite, the result follows after
straightforward numerical computations.
%
%
\end{proof}

\begin{proof}[Proof of Theorem~\ref{th:CLT:size}]
We derive the result from Theorem~\ref{th:CLT:zero}.
Let $ \breve \mu $ be as in Proposition~\ref{pr:size2zero}.
By the latter, it satisfies Assumption~\ref{ass:zero}. Noting that $ \breve \beta_3 \le \beta_3 $,
$ \breve \beta_{23}(a, b) \le \beta_{23}(a, b) $ and $ \breve \beta_{234}(a, b, c) \le \beta_{234}(a, b, c) $,
the inequalities~\eqref{eq:CLT3xi}--\eqref{eq:CLT1xi} follow from
the inequalities~\eqref{eq:CLT3Xi}--\eqref{eq:CLT1Xi}.
\end{proof}

\section{Appendix: theoretical preliminaries, notation and conventions}
\label{sc:Prel}

Throughout this appendix, $ U $, $ U' $, $ V $, $ V' $, $ W $, $ W' $, $ Z $ and $ Z' $ will denote
vector spaces. Unless specified otherwise, all vector spaces will be assumed to be real and
finite-dimensional.

\subsection{Dual pairs of vector spaces}

\begin{definition}
A \emph{dual pair} of vector spaces is a triplet $ (V, V', \scalp{\cdot}{\cdot}) $,
where $ \scalp{\cdot}{\cdot} $ is a nonsingular \emph{pairing} between $ V $ and
$ V' $, i.~e., a bilinear functional $ V \times V' \to \RR $, such
that for each $ v \in V \setminus \{ 0 \} $, there exists $ v' \in V' $ with $ \scalp{v}{v'} \ne 0 $, and
that for each $ v' \in V' \setminus \{ 0 \} $, there exists $ v \in V $ with $ \scalp{v}{v'} \ne 0 $.
\end{definition}

Observe that if $ V' $ is the space of all linear functionals on $ V $ and $ \scalp{v}{v'} = v'(v) $,
then $ (V, V', \scalp{\cdot}{\cdot}) $ is a dual pair of vector spaces. This is true because
all linear functionals on subspaces can be extended to the whole space (a well known extension
to the infinite-dimensional case is the Hahn--Banach theorem, see Theorem~3.3 of Rudin~\citep{Rud3}).
Conversely, if $ (V, V', \scalp{\cdot}{\cdot}) $ is a dual pair of vector spaces, $ V' $ is
naturally isomorphic to the space of all linear functionals on $ V $ and vice versa. Thus,
$ V $ and $ V' $ are of the same dimension.

\begin{definition}
Let $ (V, V', \scalp{\cdot}{\cdot}) $ be a dual pair of vector spaces. The bases
$ e_1, \ldots, e_n $ of $ V $ and $ e'_1, \ldots, e'_n $ of $ V' $ are
dual with respect to the pairing $ \scalp{\cdot}{\cdot} $ if $ \scalp{e_i}{e'_j}
= \One(i = j) $ for all $ i $ and $ j $.
\end{definition}

\noindent
Observe that for each basis of $ V $, there exists a unique dual basis if $ V' $.

\begin{definition}
A dual pair of \emph{normed spaces} is a quintuplet $ (V, V', \scalp{\cdot}{\cdot}, |\cdot|, |\cdot|') $,
where $ (V, V', \scalp{\cdot}{\cdot}) $ is a dual pair of vector spaces, $ |\cdot| $ is a norm
on $ V $, $ |\cdot|' $ a norm on $ V' $, and $ |\cdot| $ and $ |\cdot|' $ are \emph{dual norms}
(with respect to the pairing $ \scalp{\cdot}{\cdot} $), i.~e.,
$ |v| = \sup \{ |\scalp{v}{v'}| \sth |v'|' \le 1 \} $ for all $ v \in V $ and
$ |v'|' = \sup \{ |\scalp{v}{v'}| \sth |v| \le 1 \} $ for all $ v' \in V' $.
\end{definition}

Observe that one of the assumptions on norms is sufficient: if
$ |v| = \sup \{ |\scalp{v}{v'}| \sth |v'|' \le 1 \} $ for all $ v \in V $, then also
$ |v'|' = \sup \{ |\scalp{v}{v'}| \sth |v| \le 1 \} $ for all $ v' \in V' $. This is
due to the Hahn--Banach theorem.

If $ V $ is an Euclidean space with a scalar product $ \scalp{\cdot}{\cdot} $ and the
underlying norm $ |\cdot| $, then $ (V, V, \scalp{\cdot}{\cdot}, |\cdot|, |\cdot|) $ is
a dual pair of normed spaces.

\subsection{Tensors}
\label{ssc:Ten}

\begin{definition}
Let $ (U, U', \scalp{\cdot}{\cdot}_1) $ and $ (V, V', \scalp{\cdot}{\cdot}_2) $ be dual
pairs of vector spaces. The \emph{tensor product} of $ U $ and $ V $ with
respect to the preceding dual pairs is the space of all bilinear functionals
$ U' \times V' \to \RR $.

Observe that all tensor products of two fixed spaces $ U $ and $ V $ (with respect to different
dual pairs) are naturally isomorphic and will be therefore all denoted by $ U \otimes V $.

The elements of $ U \otimes V $ are called \emph{tensors}. For $ u \in U $ and $ v \in V $,
we define the \emph{elementary tensor} $ u \otimes v $ by $ (u \otimes v)(u', v') :=
\scalp{u}{u'}_1 \scalp{v}{v'}_2 $.
\end{definition}

Observe that each tensor is a sum of elementary tensors. Moreover, if $ e_1, \ldots e_m $
is a basis of $ U $ and $ f_1, \ldots, f_n $ is a basis of $ V $, then the elementary
tensors $ e_i \otimes f_j $, $ i = 1, \ldots, m $, $ j = 1, \ldots, n $, form a basis
of $ U \otimes V $.

Furthermore, observe that
for each bilinear map $ \Phi \Colon U \times V \to W $, there exists a unique linear map
$ L_\Phi \Colon U \otimes V \to W $, such that
$ L_\Phi(u \otimes v) = \Phi(u, v) $ for all $ u \in U $ and $ v \in V $. The latter
fact often serves as a definition of the tensor product.

By our definition, however, each tensor $ \phi \in U \otimes V $ is a bilinear map
$ U' \times V' \to \RR $ and can be therefore assigned the linear map
$ L_\phi \Colon U' \otimes V' \to \RR $. Observe that the map $ (\phi, \phi') \mapsto
\scalp{\phi}{\phi'}_\otimes := L_\phi \phi' $ is a non-singular pairing between $ U \otimes V $
and $ U' \otimes V' $ characterized by $ \scalp{u \otimes v}{u' \otimes v'}_\otimes
= \scalp{u}{u'}_1 \scalp{v}{v'}_2 $. Thus,
$ (U \otimes V, U' \otimes V', \scalp{\cdot}{\cdot}_\otimes) $ is a dual pair of
vector spaces.

Each tensor $ \phi \in U \otimes V $ can also be assigned a linear map $ \tilde L_\phi \Colon
V' \to U $ characterized by $ \scalp{\tilde L_\phi v'}{u'}_1 = \phi(u', v') = \scalp{\phi}{u' \otimes v'} $.
The latter will be identified with the tensor itself, so that we shall simply write $ \phi v' $ for
$ \tilde L_\phi v' $. Thus,
\begin{equation}
\label{eq:ActTensScalp}
 \scalp{\phi v'}{u'} = \scalp{\phi}{u' \otimes v'} \, .
\end{equation}
Observe also that
\begin{equation}
\label{eq:ActPureTens}
 (u \otimes v) v' = \scalp{v}{v'} u \, .
\end{equation}
If $ e_1, \ldots, e_m $ is a basis of $ U $, $ f_1, \ldots f_n $
a basis of $ V $ and $ f'_1, \ldots, f'_n $ the underlying dual basis of $ V' $,
the tensor $ \sum_{i,j} a_{ij} \, e_i \otimes f_j $ is identified with the linear
map with matrix $ [a_{ij}]_{i,j} $ with respect to the bases $ f'_1, \ldots, f'_n $
and $ e_1, \ldots, e_m $. If $ V $ is an Euclidean space, $ u \otimes v $ is identified
with $ u v^\Trans $.

The tensor product $ \RR \otimes V $ is naturally isomorphic to $ V $; the latter
two will therefore be identified. Next, the tensor products $ (U \otimes V) \otimes W $
and $ U \otimes (V \otimes W) $ are also naturally isomorphic and will be denoted
by $ U \otimes V \otimes W $. Similarly, we write $ V_1 \otimes V_2 \otimes \cdots \otimes V_r $
and denote the elementary tensors by $ v_1 \otimes v_2 \otimes \cdots \otimes v_r $.
We shall also denote
$ V^{\otimes r} = \underbrace{v \otimes \cdots \otimes v}_r $ and
$ v^{\otimes r} = \underbrace{v \otimes \cdots \otimes v}_r $.

\begin{definition}
\label{df:InjProj}
Let $ (U, U', \scalp{\cdot}{\cdot}_1, |\cdot|_1, |\cdot|'_1) $ and
$ (V, V', \scalp{\cdot}{\cdot}_2, |\cdot|_2, |\cdot|'_2) $ be dual pairs of normed spaces.
The \emph{injective norm} on $ U \otimes V $ is defined by
$ |\phi|_\vee := \sup \{ |\scalp{\phi}{u' \otimes v'}_\otimes| \sth |u'|'_1 \le 1, |v'|'_2 \le 1 \} $
The \emph{projective norm} on $ U' \otimes V' $ is the norm dual to the injective norm
and will be denoted by $ |\cdot|'_\wedge $, i.~e.,
$ |\phi'|'_\wedge := \sup \{ |\scalp{\phi}{\phi'}_\otimes| \sth |\phi|_\vee \le 1 \} $.
\end{definition}

For details on the projective and the injective norm, the reader is referred to Defant
and Floret~\citep{TenNrm}. Notice that the authors use a different (in fact, more common)
definition of the projective norm, but our definition is equivalent: see Section~3.2,
Equation~$(1)$ ibidem.

The injective and the projective norm are both \emph{cross norms}, i.~e.,
$ |u \otimes v|_\vee = |u|_1 |v|_2 $ and $ |u' \otimes v'|'_\wedge = |u'|'_1 |v'|'_2 $.
Next, observe that the natural isomorphism between $ (U \otimes V) \otimes W $
and $ U \otimes (V \otimes W) $ is an isometry if we take the injective norm
in all tensor products. Similarly, the natural isomorphism between $ (U' \otimes V') \otimes W' $
and $ U' \otimes (V' \otimes W') $ is an isometry if we take the projective norm
in all cases. Therefore, there is an unambiguous injective norm on $ U \otimes V \otimes W $
and an unambiguous projective norm on $ U' \otimes V' \otimes W' $.

Recall that, by the Hahn--Banach theorem,
$ |\phi|_\vee = \sup \{ |\scalp{\phi}{\phi'}_\otimes| \sth |\phi'|_\wedge \le 1 \} $.
In other words, if $ U' \otimes V' $ is endowed with the projective norm, the injective
norm of $ \phi $ matches the operator norm of the underlying linear functional $ L_\phi $.
In addition, observe that it also matches the operator norm of the underlying linear map
$ \tilde L_\phi \Colon V' \to U $.

\begin{proposition}
\label{pr:NormTensorOp}
Keeping the notation from Definition~\ref{df:InjProj}, take another dual pair of normed spaces
$ (Z, Z', \scalp{\cdot}{\cdot}_3, |\cdot|_3, |\cdot|'_3) $.
Recalling that each bilinear map $ \Phi \Colon U' \times V' \to Z $ can be assigned a linear map
$ L_\Phi \Colon U' \otimes V' \to Z $, we have:
\[
 \sup \bigl\{ |\Phi(u', v')|_3 \sth |u'|'_1 \le 1, |v'|'_2 \le 1 \bigr\}
 =
 \sup \bigl\{ |L_\Phi \phi'|_3 \sth |\phi'|'_\wedge \le 1 \bigr\} \, .
\]
\end{proposition}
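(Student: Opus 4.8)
The plan is to prove the claimed identity by establishing the two inequalities between the suprema separately; throughout, write $S$ for the left-hand side $\sup\{|\Phi(u',v')|_3 \sth |u'|'_1\le 1,\,|v'|'_2\le 1\}$ and $T$ for the right-hand side $\sup\{|L_\Phi\phi'|_3\sth |\phi'|'_\wedge\le 1\}$. For $S\le T$ I would use that the projective norm is a cross norm: whenever $|u'|'_1\le 1$ and $|v'|'_2\le 1$, the elementary tensor $u'\otimes v'$ satisfies $|u'\otimes v'|'_\wedge=|u'|'_1|v'|'_2\le 1$, while $L_\Phi(u'\otimes v')=\Phi(u',v')$ by the defining property of $L_\Phi$ recalled in Subsection~\ref{ssc:Ten}. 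Hence every number $|\Phi(u',v')|_3$ entering the supremum $S$ is of the form $|L_\Phi\phi'|_3$ with $|\phi'|'_\wedge\le 1$, so $S\le T$.

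For the reverse inequality $T\le S$, fix $\phi'\in U'\otimes V'$ with $|\phi'|'_\wedge\le 1$ and $z'\in Z'$ with $|z'|'_3\le 1$; since $|\cdot|_3$ and $|\cdot|'_3$ are dual norms, it suffices to bound $|\scalp{L_\Phi\phi'}{z'}_3|$. The key step is to observe that the bilinear functional $\psi\Colon U'\times V'\to\RR$ defined by $\psi(u',v'):=\scalp{\Phi(u',v')}{z'}_3$ is, by the very definition of the tensor product in Subsection~\ref{ssc:Ten}, an element of $U\otimes V$. From $\scalp{\psi}{u'\otimes v'}_\otimes=\psi(u',v')=\scalp{L_\Phi(u'\otimes v')}{z'}_3$ and linearity in the second slot one gets $\scalp{\psi}{\phi'}_\otimes=\scalp{L_\Phi\phi'}{z'}_3$ for all $\phi'$. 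Moreover, evaluating the injective norm of $\psi$ on elementary tensors and using $|\scalp{z}{z'}_3|\le|z|_3|z'|'_3\le|z|_3$, one obtains $|\psi|_\vee=\sup\{|\scalp{\Phi(u',v')}{z'}_3|\sth|u'|'_1\le 1,\,|v'|'_2\le 1\}\le S$. Since $|\cdot|'_\wedge$ is the norm dual to $|\cdot|_\vee$, it follows that $|\scalp{L_\Phi\phi'}{z'}_3|=|\scalp{\psi}{\phi'}_\otimes|\le|\psi|_\vee\,|\phi'|'_\wedge\le S$; taking the supremum first over all admissible $z'$ and then over all admissible $\phi'$ yields $T\le S$, and combining the two bounds gives $S=T$.

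I expect the only genuinely delicate point to be the bookkeeping around $\psi$: one has to recognize that composing the bilinear map $\Phi$ with the linear functional $\scalp{\cdot}{z'}_3$ produces exactly a bilinear functional on $U'\times V'$, which by the paper's convention is literally a tensor in $U\otimes V$, and then to verify the compatibility $\scalp{\psi}{\phi'}_\otimes=\scalp{L_\Phi\phi'}{z'}_3$ of this construction with the pairing $\scalp{\cdot}{\cdot}_\otimes$. Everything else is a direct appeal to the cross-norm property, the duality of $|\cdot|_3$ with $|\cdot|'_3$, and the duality of $|\cdot|'_\wedge$ with $|\cdot|_\vee$, all of which are recorded earlier in the excerpt. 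An alternative proof of $T\le S$ would instead expand $\phi'$ as a finite sum of elementary tensors nearly realizing $|\phi'|'_\wedge$ (using the equivalence of the paper's definition of the projective norm with the usual infimum formula from the cited reference), but the duality argument above has the advantage of staying entirely within the framework already set up.
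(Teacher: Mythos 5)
Your proof is correct and is essentially the same as the paper's. Both arguments hinge on the observation that $\psi_{z'}(u',v') = \scalp{\Phi(u',v')}{z'}_3$ (your $\psi$, the paper's $\phi_{z'}$) is itself a tensor in $U\otimes V$ satisfying $\scalp{\psi_{z'}}{\phi'}_\otimes = \scalp{L_\Phi\phi'}{z'}_3$, followed by the duality between $|\cdot|_\vee$ and $|\cdot|'_\wedge$ and a final supremum over $|z'|'_3\le 1$; you merely split the two inclusions into separate inequalities (using the cross-norm property for $S\le T$ and the easy half of the norm duality for $T\le S$) whereas the paper extracts both at once from the Hahn--Banach characterization of $|\phi_{z'}|_\vee$.
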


\begin{proof}
Take $ z' \in Z' $ and consider the bilinear map $ \phi_{z^\prime}(u', v')
:= \scalp{\Phi(u', v')}{z'}_3 $, which is in fact a tensor in $ U \otimes V $. Observe that
$ \scalp{\phi_{z^\prime}}{\phi'}_\otimes = L_{\phi_{z^\prime}} \phi' = \scalp{L_\Phi \phi'}{z'}_3 $
for all $ \phi' \in U' \otimes V' $. Now consider its injective norm
$
 |\phi_{z^\prime}|_\vee
 =
 \sup \{ |\scalp{\phi_{z^\prime}}{u' \otimes v'}_\otimes| \sth |u'|'_1 \le 1, |v'|'_2 \le 1 \}
 =
 \sup \{ |\scalp{\phi_{z^\prime}}{\phi'}_\otimes| \sth |\phi'|'_\wedge \le 1 \}
$.
The latter equality can be rewritten as
$
 \sup \{ |\phi_{z^\prime}(u', v')| \sth |u'|'_1 \le 1, |v'|'_2 \le 1 \}
 =
 \sup \{ |L_{\phi_{z^\prime}} \phi'| \sth |\phi'|'_\wedge \le 1 \}
$
or
$
 \sup \{ |\scalp{\Phi(u', v')}{z^\prime}_3| \sth |u'|'_1 \le 1, |v'|'_2 \le 1 \}
 =
 \sup \{ |\scalp{L_\Phi \phi'}{z'}_3| \sth |\phi'|'_\wedge \le 1 \}
$.
Therefore,
$
 \sup \{ |\Phi(u', v')|_3 \sth |u'|'_1 \le 1, |v'|'_2 \le 1 \}
 =
 \sup \{ |\scalp{\Phi(u', v')}{z'}_3| \sth |u'|'_1 \le 1, |v'|'_2 \le 1, |z'|'_3 \le 1 \}
 = 
 \sup \{ |\scalp{L_\Phi \phi'}{z'}_3| \sth |\phi'|'_\wedge \le 1, |z'|'_3 \le 1 \}
 =
 \sup \{ |L_\Phi \phi'|_3 \sth |\phi'|'_\wedge \le 1 \}
$.
This completes the proof.
\end{proof}

\begin{definition}
Let $ (V, V', \scalp{\cdot}{\cdot}) $ be a dual pair of vector spaces. A tensor
$ \phi \in V^{\otimes r} $ is \emph{symmetric} if $
\scalp{\phi}{v'_1 \otimes \cdots \otimes v'_r}
=
\scalp{\phi}{v'_{\pi(1)} \otimes \cdots \otimes v'_{\pi(r)}}
$ for all permutations $ \pi $ of indices $ 1, 2, \ldots, r $.
\end{definition}

\begin{proposition}[Banach~\citep{Ban}; Bochnak and Siciak~\citep{BSi}]
\label{pr:InjSym}
Let $ (V, V', \scalp{\cdot}{\cdot}, |\cdot|, |\cdot|') $ be a dual pair of normed spaces.
Take $ r \in \NN $. If $ \phi \in V^{\otimes r} $ is a symmetric tensor, then
$ |\phi|_\vee = $\newline $ \sup_{|v'|' \le 1} \bigl| \scalp{\phi}{(v')^{\otimes r}} \bigr| $.\qed
\end{proposition}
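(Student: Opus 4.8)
The statement is the tensor form of a classical theorem of Banach on symmetric multilinear forms (cf.\ Bochnak and Siciak), so the plan is to make the translation explicit and then prove that theorem. Since $\phi \in V^{\otimes r}$ is by definition a multilinear functional on $(V')^r$, put $A(v'_1, \ldots, v'_r) := \scalp{\phi}{v'_1 \otimes \cdots \otimes v'_r}$; the hypothesis that $\phi$ is symmetric says precisely that the form $A$ is symmetric. Unwinding Definition~\ref{df:InjProj} --- the injective norm on an iterated tensor product is the supremum of the associated functional over tuples of unit covectors --- identifies $|\phi|_\vee$ with $\sup\{\,|A(v'_1, \ldots, v'_r)| \sth |v'_i|' \le 1\,\}$ and the right-hand side of the claim with $\sup\{\,|A(v', \ldots, v')| \sth |v'|' \le 1\,\}$. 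The inequality ``$\ge$'' is trivial, because diagonal tuples form a subfamily of those entering $|\phi|_\vee$, so the whole content is the reverse inequality: for a symmetric form the operator-norm supremum is already attained on the diagonal.

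For $r = 2$ this is immediate: in the applications $|\cdot|'$ is the norm of an inner product $\scalp{\cdot}{\cdot}$ on $V'$ (e.g.\ in Lemma~\ref{lm:GaussSm}, where $|\cdot|'$ is the Euclidean norm), so one diagonalises $A(u, v) = \sum_k \lambda_k \scalp{u}{\psi_k}\scalp{v}{\psi_k}$ along an orthonormal basis $(\psi_k)$; then $|\phi|_\vee = \max_k |\lambda_k|$ while $A(\psi_k, \psi_k) = \lambda_k$, so the diagonal already realises the norm. For general $r$ I would run the induction behind the cited theorem, which peels off two slots at a time down to the bilinear case: by compactness of the unit ball (and replacing $A$ by $-A$ if needed) pick unit vectors $e_1, \ldots, e_r$ with $A(e_1, \ldots, e_r) = N := |\phi|_\vee$; the first-order optimality conditions give $A(e_1, \ldots, e_{i-1}, v, e_{i+1}, \ldots, e_r) = N \scalp{v}{e_i}$ for all $v$ and all $i$, so the symmetric bilinear form $B(u, v) := A(u, v, e_3, \ldots, e_r)$ satisfies $B(e_1, e_1) = B(e_2, e_2) = N \scalp{e_1}{e_2}$ and hence $B(w, w) = N$ for the unit vector $w := (e_1 + e_2) / |e_1 + e_2|$ (take $w := e_1$ in the degenerate case $e_1 = -e_2$); iterating the reduction one reaches a tuple all of whose entries equal a single unit vector $u$, whence $N = A(u, \ldots, u) = \scalp{\phi}{u^{\otimes r}} \le \sup_{|v'|' \le 1} |\scalp{\phi}{(v')^{\otimes r}}|$, which is the required bound.

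The entire difficulty is this reverse inequality. It cannot be extracted from the crude polarization formula, which only controls $|\phi|_\vee$ up to the factor $r^r/r!$; the improvement is bought precisely by the optimality conditions, which collapse the bilinear reduction to a rank-one situation --- the one genuine computation being the identity $B(w, w) = N$ above. Organising the iteration so that finitely many such reductions actually force all slots to coincide, together with the $r = 2$ base case, is exactly what is carried out in the references cited in the statement, so in the write-up I would either reproduce that argument or simply appeal to it.
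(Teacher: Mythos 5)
The paper itself offers no proof here --- the \texttt{\textbackslash qed} attached to the statement means the result is simply attributed to the cited works of Banach and Bochnak--Siciak, so there is no internal argument to compare against. Your sketch is a reasonable outline of Banach's theorem on symmetric multilinear forms, and the ingredients you identify (reduce to the multilinear norm, use first-order optimality at a maximizer, collapse two slots via the bisector, iterate) are indeed the skeleton of the standard proof.

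The issue worth flagging is that you present the inner-product structure as something special to ``the applications,'' when in fact it is essential to the truth of the statement, and the proposition as written in the paper is actually false for general dual pairs of normed spaces. Concretely, take $V = V' = \RR^2$ with the $\ell^1$ norm on $V$ and the dual $\ell^\infty$ norm on $V'$, and let $\phi = e_1 \otimes e_1 - e_2 \otimes e_2$. This is symmetric, $\scalp{\phi}{u' \otimes v'} = u'_1 v'_1 - u'_2 v'_2$, and $|\phi|_\vee = 2$ (attained at $u' = (1,1)$, $v' = (1,-1)$) while $\sup_{|v'|_\infty \le 1} |\scalp{\phi}{(v')^{\otimes 2}}| = \sup |(v'_1)^2 - (v'_2)^2| = 1$. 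So the equality fails; the factor $2 = r^r/r!$ from polarization is sharp here, which is exactly why the sharp constant~$1$ is a genuinely Hilbertian phenomenon. Every non-trivial step in your argument uses the inner product: the Lagrange condition $A(e_1, \ldots, v, \ldots, e_r) = N \scalp{v}{e_i}$ is the statement that the norming functional of the $\ell^\infty$-smooth unit sphere of a Euclidean norm is $\scalp{\cdot}{e_i}$, and the identity $|e_1 + e_2|^2 = 2(1 + \scalp{e_1}{e_2})$ is the parallelogram law. So the correct scope of the proposition (and of your proof) is dual pairs where $|\cdot|'$ is Euclidean; the paper only ever invokes it in that setting (Lemma~\ref{lm:GaussSm}), so nothing downstream is affected, but the hypothesis should not be left implicit. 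A smaller point: the claim that ``finitely many reductions force all slots to coincide'' is not automatic --- replacing $(e_1, e_2)$ by $(w, w)$ and then $(w, e_3)$ by $(w', w')$ does not terminate in a constant tuple without an auxiliary argument (e.g.\ choosing, by compactness, a maximizer that also maximizes $\sum_{i<j}\scalp{e_i}{e_j}$). You rightly defer to the cited references for this, which is acceptable for a result stated with attribution.
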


In the sequel (and in the main part of the paper), we omit the indices at the pairings and the norms
except for $ |\cdot|_\vee $ and $ |\cdot|_\wedge $, the injective and the projective norm.

\subsection{Derivatives as tensors}
\label{ssc:Der}

Throughout this subsection, let $ D \subseteq V $ be an open set.

If $ H \Colon D \to U $ is differentiable at $ x \in D $,
denote its underlying derivative by $ \nabla H(x) $. This is a linear map $ V \to U $.
If $ (V, V', \scalp{\cdot}{\cdot}) $ is a dual pair of vector spaces, $ \nabla H(x) $
can be identified with a tensor in $ U \otimes V' $.

From now on, assume that $ U = \RR^m $ and $ V = \RR^n $ are Euclidean spaces with
standard bases $ e_1, \ldots, e_m $ and $ f_1, \ldots, f_n $. Writing
$ H(x) = \sum_i h_i(x) \, e_i $, we have
$ \nabla H(x) = \sum_i \sum_j \frac{\partial h_i}{\partial x_j}(x) \, e_i \otimes f_j $.

Recall that $ \RR \otimes V $ can be identified with $ V $. Thus, if $ g \Colon D \to \RR $
is differentiable at $ x $, we have
$ \nabla g(x) = \sum_j \frac{\partial g}{\partial x_j}(x) \, f_j $. Thus, in this case,
$ \nabla $ denotes the gradient, as usual. Observe also that for a fixed
$ u \in U $, we have $ \nabla(u g) = u \otimes \nabla g $.

If $ g \Colon D \to U $ is $ r $ times differentiable at $ x $, the $ r $-fold derivative
$ \nabla^r g(x) = \underbrace{\nabla \cdots \nabla}_r g(x) $ at $ x $ can be regarded as an
element of $ U \otimes V^{\otimes r} $ and we have
\[
 \nabla^r g(x)
 =
 \sum_{1 \le j_1, j_2, \ldots, j_r \le n}
   \frac{\partial^r g(x)}{\partial x_{j_1} \, \partial x_{j_2} \cdots \partial x_{j_r}} \otimes
   f_{j_1} \otimes f_{j_2} \otimes \cdots \otimes f_{j_r}
 \, .
\]
In addition, for $ U = \RR $ and $ h(x) = \scalp{\nabla^r g(x)}{v_1 \otimes \cdots \otimes v_r} $,
we have
\[
 \scalp{\nabla h(x)}{v_{r+1}} = \scalp{\nabla^{r+1} g(x)}{v_1 \otimes \cdots \otimes v_r \otimes v_{r+1}} \, .
\]

\subsection{Vector measures}

Throughout this subsection, let $ (\Xi, \scrpt X) $ and $ (\Upsilon, \scrpt Y) $ denote measurable spaces,
i.~e., $ \scrpt X $ is a $ \sigma $-algebra on the set $ \Xi $ and $ \scrpt Y $ is a
$ \sigma $-algebra on $ \Upsilon $.

Let $ \lambda $ be a positive measure on $ (\Xi, \scrpt X) $ and let $ V $ be a vector
space. By $ L^1(\lambda, V) $, denote the space of all Borel measurable maps $ H \Colon \Xi \to V $ with
$ \int |H| \,\dl \lambda < \infty $, where $ |\cdot| $ is a norm on $ V $. Since vector spaces
are assumed to be finite-dimensional, the definition is independent of the norm.

\begin{definition}
Let $ (V, V', \scalp{\cdot}{\cdot}) $ be a dual pair of vector spaces. For each $ H \in L^1(\lambda, V) $ and each
$ v' \in V' $, the function $ \xi \mapsto \scalp{H(\xi)}{v'} $ is $ \lambda $-integrable. Moreover, there
exists a unique vector $ v \in V $, such that $ \scalp{v}{v'} = \int_\Xi \scalp{H(\xi)}{v'} \, \lambda(\dl \xi) $
for all $ v' \in V' $.
The vector $ v $ is called the \emph{integral} of $ H $ with respect to $ \nu $ and is denoted by
$ \int_\Xi H \,\dl \nu $ or $ \int_\Xi H(\xi) \, \nu(\dl \xi) $. The integral is independent of
the choice of $ V' $.
\end{definition}

\begin{remark}
In infinite-dimensional spaces, the concept of integral is not so evident and there are several ones:
see Chapter~2 of Diestel and Uhl~\citep{DiUVM}.
\end{remark}

\begin{remark}
\label{rk:LinIntVectMeas}
For a linear map $ L $, we have $ \int_\Xi L H \, \dl \nu = L \int_\Xi H \,\dl \nu $.
\end{remark}

\begin{definition}
A $ V $-valued measure on $ (\Xi, \scrpt X) $ is a map $ \nu \Colon \scrpt X \to V $, such that
for each sequence $ A_1, A_2, \ldots $ of disjoint measurable sets, we have
$ \nu \bigl( \bigcup_{k=1}^\infty A_k \bigr) = \sum_{k=1}^\infty \nu(A_k) $. A sum of
a $ V $-valued series $ \sum_{k=1}^\infty v_k $ is assumed to exist if $ \sum_{k=1}^\infty
|v_k| < \infty $; again, $ |\cdot| $ can be an arbitrary norm on $ V $.
\end{definition}

If $ \lambda $ is a positive measure on $ (\Xi, \scrpt X) $ and $ H \in L^1(\lambda, V) $,
the map $ \nu := H \cdot \lambda \Colon \scrpt X \to V $ defined by $ \nu(A) := \int_A H \,\dl \lambda $
is a $ V $-valued measure. Conversely, for each $ \sigma $-finite positive measure $ \lambda $ and
each $ \lambda $-continuous vector measure $ \nu $ (i.~e., $ \nu(A) = 0 $
for each $ A $ with $ \lambda(A) = 0 $), there exists a map $ H \in L^1(\lambda, V) $, such that
$ \nu = H \cdot \lambda $. Since $ V $ is assumed to be finite-dimensional, this can be deduced from the
classical Radon--Nikod\'ym theorem. In general, this is not true (see Chapter~3 of Diestel and
Uhl~\citep{DiUVM}).

Observe that for any finite collection $ \nu_1, \ldots, \nu_n $ of vector measures on
the same measurable space and with values in $ V_1, V_2, \ldots, V_n $, respectively,
there exists a finite positive measure $ \lambda $, such that all measures $ \mu_k $
are $ \lambda $-continuous (one can take $ \lambda = \sum_{k=1}^n |\mu_k| $).
Thus, there also exist functions $ H_k \in L^1(\lambda, V_k) $, such that
$ \nu_k = H_k \cdot \lambda $ for all $ k = 1, \ldots, n $.

\begin{definition}
Let $ (V, |\cdot|) $ be a normed space. The \emph{total variation} of a $ V $-valued vector measure
$ \nu $ is defined as $ |\nu|(A) := \sup \sum_{k=1}^n |\nu(A_k)| $, where the supremum runs over
all finite measurable partitions $ A_1, A_2, \ldots, A_n $ of the set $ A $.
\end{definition}

The total variation of a vector measure is a positive measure. As $ V $ is assumed to be
finite-dimensional, it is finite. This can be deduced from the corresponding properties
of real measures: see Theorems~6.2 and 6.4 of Rudin~\citep{Rud2}.

If $ \nu = H \cdot \lambda $, where $ H $ is a positive measure, we have $ |\nu| = |H| \cdot \lambda $:
see Theorem~4 in Section~2 of Chapter~2 of Diestel and Uhl~\citep{DiUVM}.

If $ L \Colon V \to W $ is a linear map and $ \nu $ is a $ V $-valued vector measure, we
define a new vector measure $ L \nu $ by $ (L \nu)(A) := L \nu(A) $. In particular, if
$ (V, V', \scalp{\cdot}{\cdot}) $ is a dual pair of vector spaces, we define a new real
measure $ \scalp{\nu}{v'} $.

\begin{definition}
\label{df:IntTen}
Let $ (U, U', \scalp{\cdot}{\cdot}) $ and $ (V, V', \scalp{\cdot}{\cdot}) $ be dual pairs
of vector spaces and let $ \nu $ be a $ V $-valued vector measure on $ (\Xi, \scrpt X) $.
By $ L^1(\nu, U) $, we denote the space of all Borel measurable functions $ G \Colon \Xi \to U $,
such that $ \int_\Xi |\scalp{G}{u'}| \,\dl |\scalp{\nu}{v'}| < \infty $ for all $ u' \in U' $
and all $ v' \in V' $.

For $ G \in L^1(\nu, U) $, define the integral $ \int_\Xi G \otimes \dl \nu = \int_\Xi G(\xi) \otimes \nu(\dl \xi) $
as the unique tensor $ \phi \in U \otimes V $ which satisfies $ \scalp{\phi}{u' \otimes v'}
= \int_\Xi \scalp{G}{u'} \, \dl \scalp{\nu}{v'} $ for all $ u' \in U' $ and all $ v' \in V' $.
Observe that the definitions of $ L^1(\nu, U) $ and $ \int_\Xi G \otimes \dl \nu $
are independent of the choice of $ U' $ and $ V' $.

This allows us to define a new $ (U \otimes V) $-valued vector measure $ G \otimes \nu $.

For a bilinear map $ \Phi \Colon U \times V \to W $, define $ \int_\Xi \Phi(G, \dl \nu) :=
\int_\Xi \Phi \bigl( G(\xi), \dl \nu(\xi) \bigr) := L_\Phi \int_\Xi G \otimes \dl \nu $.
\end{definition}

\begin{proposition}
\label{pr:IntBilhlambda}
Let $ G $ and $ \nu $ be as above. If $ \nu = H \cdot \lambda $, where $ \lambda $ is a positive measure,
we have
$
 \int_\Xi \Phi(G, \dl \nu) = \int_\Xi \Phi \bigl( G(\xi), H(\xi) \bigr) \, \lambda(\dl \xi)
$. In particular, $ G \otimes \nu = (G \otimes H) \cdot \lambda $.
\end{proposition}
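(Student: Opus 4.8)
The plan is to prove the ``in particular'' identity $\int_\Xi G \otimes \dl\nu = \int_\Xi (G \otimes H)\,\dl\lambda$ first, and then read off both the bilinear statement and the vector-measure identity from it. Recall from Definition~\ref{df:IntTen} that $\int_\Xi G \otimes \dl\nu$ is \emph{defined} as the unique tensor $\phi \in U \otimes V$ satisfying $\scalp{\phi}{u' \otimes v'} = \int_\Xi \scalp{G}{u'}\,\dl\scalp{\nu}{v'}$ for all $u' \in U'$ and $v' \in V'$; so it suffices to show that $\psi := \int_\Xi (G \otimes H)\,\dl\lambda$ satisfies this same relation, and for that I first have to make sure $\psi$ is well defined.

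First I would record that $\nu = H \cdot \lambda$ implies $\scalp{\nu}{v'} = \scalp{H}{v'}\cdot\lambda$ for every $v' \in V'$: this is obtained by applying the linear functional $\scalp{\cdot}{v'}$ inside the integral defining $\nu(A)$, using Remark~\ref{rk:LinIntVectMeas}. Consequently $|\scalp{\nu}{v'}| = |\scalp{H}{v'}|\cdot\lambda$ by the identity $|K\cdot\lambda| = |K|\cdot\lambda$ noted above Definition~\ref{df:IntTen}, and hence, by the standard change-of-density formula for measures with a density, $\int_\Xi |\scalp{G}{u'}|\,\dl|\scalp{\nu}{v'}| = \int_\Xi |\scalp{G(\xi)}{u'}|\,|\scalp{H(\xi)}{v'}|\,\lambda(\dl\xi)$, which is finite for all $u', v'$ since $G \in L^1(\nu, U)$. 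Running over a basis of $U'$ and of $V'$ and using equivalence of norms in the finite-dimensional space $U \otimes V$, this shows $\int_\Xi |G \otimes H|\,\dl\lambda < \infty$, so $\psi$ is well defined and the measure $(G \otimes H)\cdot\lambda$ makes sense.

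Next, for fixed $u' \in U'$ and $v' \in V'$, since $\omega \mapsto \scalp{\omega}{u' \otimes v'}$ is linear on $U \otimes V$, Remark~\ref{rk:LinIntVectMeas} together with $\scalp{u \otimes v}{u' \otimes v'} = \scalp{u}{u'}\scalp{v}{v'}$ gives
\[
 \scalp{\psi}{u' \otimes v'}
 =
 \int_\Xi \scalp{G(\xi) \otimes H(\xi)}{u' \otimes v'}\,\lambda(\dl\xi)
 =
 \int_\Xi \scalp{G(\xi)}{u'}\,\scalp{H(\xi)}{v'}\,\lambda(\dl\xi)
 \, ,
\]
while the computation of the previous paragraph (now without absolute values) shows $\int_\Xi \scalp{G}{u'}\,\dl\scalp{\nu}{v'} = \int_\Xi \scalp{G(\xi)}{u'}\,\scalp{H(\xi)}{v'}\,\lambda(\dl\xi)$ as well. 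The two agree, so by the uniqueness clause of Definition~\ref{df:IntTen} we conclude $\int_\Xi G \otimes \dl\nu = \psi = \int_\Xi (G \otimes H)\,\dl\lambda$. Applying this with $\lambda$ replaced by its restriction $A \mapsto \lambda(A \cap B)$ for each $B \in \scrpt X$ (under which $\nu$ restricts correspondingly and $G$ stays integrable) yields the vector-measure identity $G \otimes \nu = (G \otimes H)\cdot\lambda$.

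Finally, for a general bilinear $\Phi \colon U \times V \to W$ one has $\int_\Xi \Phi(G, \dl\nu) = L_\Phi \int_\Xi G \otimes \dl\nu = L_\Phi \int_\Xi (G \otimes H)\,\dl\lambda = \int_\Xi L_\Phi\bigl(G(\xi) \otimes H(\xi)\bigr)\,\lambda(\dl\xi) = \int_\Xi \Phi\bigl(G(\xi), H(\xi)\bigr)\,\lambda(\dl\xi)$, by Remark~\ref{rk:LinIntVectMeas} and the definition $\int_\Xi \Phi(G, \dl\nu) = L_\Phi \int_\Xi G \otimes \dl\nu$. The whole argument is bookkeeping with definitions and there is no real obstacle; the only two points deserving a line of justification are the integrability check making $\int_\Xi (G \otimes H)\,\dl\lambda$ well defined and the scalar change-of-density formula $\int f\,\dl(g \cdot \lambda) = \int fg\,\dl\lambda$ for a real density $g$, which is classical and, like everything else here, is insensitive to the choice of $U'$ and $V'$ thanks to finite-dimensionality.
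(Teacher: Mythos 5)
Your proposal is correct and follows essentially the same route as the paper: reduce the whole statement to the tensor identity $\int_\Xi G\otimes\dl\nu=\int_\Xi(G\otimes H)\,\dl\lambda$, then reduce that to the scalar identity $\scalp{\nu}{v'}=\scalp{H}{v'}\cdot\lambda$, which is Remark~\ref{rk:LinIntVectMeas}. You simply flesh out the integrability check and the restriction argument for the vector-measure identity, which the paper leaves implicit.
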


\begin{proof}
It suffices to prove that $ \int_\Xi G \otimes \dl \nu = \int_\Xi (G \otimes H) \,\dl \lambda $
for all $ G \in L^1(\nu, U) $, where $ U $ is a vector space. Let $ \nu $ be $ V $-valued and let
$ (U, U', \scalp{\cdot}{\cdot}) $ and $ (V, V', \scalp{\cdot}{\cdot}) $ be dual pairs
of vector spaces. Then it suffices to check that $ \int_\Xi \scalp{G}{u'} \, \dl \scalp{\nu}{v'}
= \int_\Xi \scalp{G}{u'} \, \dl \scalp{H}{v'} \,\dl \lambda $ for all $ G \in L^1(\nu, U) $,
$ u \in U' $ and $ v \in V' $. However, the latter is equivalent to the claim that
$ \scalp{\nu}{v'} = \scalp{H}{v'} \cdot \lambda $, which follows from Remark~\ref{rk:LinIntVectMeas}.
\end{proof}

\begin{proposition}
\label{pr:IntVectMeasEst}
Let $ U $, $ V $ and $ W $ be normed spaces. For each $ V $-valued vector measure $ \nu $
on $ (\Xi, \scrpt X) $ and each $ G \in L^1(\nu, U) $, we have
$ |G \otimes \nu|_\vee = |G \otimes \nu|_\wedge = |G| \cdot \nu $.

In addition, take a bilinear map $ \Phi \Colon U \times V \to W $. If $ |\Phi(u, v)| \le a \, |u| \, |v| $
for all $ u \in U $ and all $ v \in V $, we also have $ \bigl| \int_\Xi \Phi(G, \dl \nu) \bigr|
\le a \int_\Xi |G| \,\dl |\nu| $.
\end{proposition}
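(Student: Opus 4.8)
The plan is to reduce everything to a scalar Radon--Nikod\'ym representation of $ \nu $ together with the fact that the injective and projective norms are cross norms. First I would take $ \lambda := |\nu| $, which is a finite positive measure since $ V $ is finite-dimensional, and invoke the representation recorded after the definition of vector measures to get $ H \in L^1(\lambda, V) $ with $ \nu = H \cdot \lambda $; applying the identity that $ H \cdot \lambda $ has total variation $ |H| \cdot \lambda $ to this very representation gives $ |\nu| = |H| \cdot |\nu| $, hence $ |H| = 1 $ $ |\nu| $-almost everywhere. By Proposition~\ref{pr:IntBilhlambda}, the $ (U \otimes V) $-valued measure $ G \otimes \nu $ equals $ (G \otimes H) \cdot |\nu| $ -- here $ G \otimes H $ is $ |\nu| $-integrable because membership of $ G $ in $ L^1(\nu, U) $, expanded over a basis of $ V' $, yields $ \int_\Xi |G| \,\dl|\nu| < \infty $ -- so, applying the total-variation rule once more, the total variation of $ G \otimes \nu $ with respect to any prescribed norm on $ U \otimes V $ is the positive measure $ A \mapsto \int_A |G(\xi) \otimes H(\xi)| \, |\nu|(\dl \xi) $.

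For the first assertion I would then use that both the injective norm on $ U \otimes V $ and the projective norm on $ U \otimes V $ (the norm dual to the injective norm on $ U' \otimes V' $) are cross norms, so that $ |G(\xi) \otimes H(\xi)|_\vee = |G(\xi)| \, |H(\xi)| = |G(\xi) \otimes H(\xi)|_\wedge $ for every $ \xi $; since $ |H| = 1 $ $ |\nu| $-almost everywhere, both total variations coincide with the positive measure $ |G| \cdot |\nu| $, which is exactly the claimed identity. This measure is finite because a vector measure into a finite-dimensional space has finite total variation.

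For the bound on $ \int_\Xi \Phi(G, \dl\nu) $, observe that by Definition~\ref{df:IntTen} this integral equals $ L_\Phi \phi $ with $ \phi := \int_\Xi G \otimes \dl\nu = (G \otimes \nu)(\Xi) $. The hypothesis $ |\Phi(u, v)| \le a \, |u| \, |v| $ says precisely that $ \sup \{ |\Phi(u, v)| \sth |u| \le 1,\, |v| \le 1 \} \le a $, and Proposition~\ref{pr:NormTensorOp}, applied with each of the spaces $ U, V, W $ interchanged with its dual so that $ \Phi $ is read as a bilinear map out of the primed factors, identifies this supremum with $ \sup \{ |L_\Phi \psi| \sth |\psi|_\wedge \le 1 \} $, i.e.\ with the operator norm of $ L_\Phi $ relative to the projective norm on $ U \otimes V $. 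Hence $ \bigl| \int_\Xi \Phi(G, \dl\nu) \bigr| = |L_\Phi \phi| \le a \, |\phi|_\wedge \le a \, |G \otimes \nu|_\wedge(\Xi) = a \int_\Xi |G| \,\dl|\nu| $, where the middle inequality is the trivial estimate $ |\eta(\Xi)| \le |\eta|(\Xi) $ for a vector measure $ \eta $ and the last equality is the first part of the proposition.

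The point needing the most care is the bookkeeping of duals. Definition~\ref{df:InjProj} introduces the projective norm only on a dual tensor product, so I must be explicit that $ |G \otimes \nu|_\wedge $ refers to the projective norm on $ U \otimes V $ itself -- the norm dual to the injective norm on $ U' \otimes V' $ -- and that Proposition~\ref{pr:NormTensorOp}, phrased for a bilinear map out of $ U' \times V' $, is invoked here for $ \Phi \Colon U \times V \to W $ through the identification of a finite-dimensional space with its bidual; this same identification is what makes the cross-norm property available for $ U \otimes V $ rather than only for $ U' \otimes V' $. Once these identifications are pinned down, the remaining steps are routine applications of the change-of-variable rule $ \int_\Xi \Phi(G, \dl\nu) = \int_\Xi \Phi\bigl(G(\xi), H(\xi)\bigr)\,\lambda(\dl\xi) $ from Proposition~\ref{pr:IntBilhlambda} and of the cross-norm identities.
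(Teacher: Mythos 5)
Your first paragraph is essentially the paper's argument: write $\nu = H\cdot\lambda$, push $G\otimes$ through by Proposition~\ref{pr:IntBilhlambda}, apply the total-variation rule, and use the cross-norm property. The only cosmetic difference is your specific choice $\lambda = |\nu|$, which gives you $|H|=1$ almost everywhere; the paper works with an arbitrary finite dominating $\lambda$ and simply keeps the factor $|H|\cdot\lambda = |\nu|$. Both are fine, and your explicit remark that $G\in L^1(\nu,U)$ yields $\int|G|\,\dl|\nu|<\infty$ via basis expansion is a useful bit of bookkeeping the paper leaves implicit.

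For the second assertion you take a genuinely different route. The paper converts $\int_\Xi\Phi(G,\dl\nu)$ to the pointwise integral $\int_\Xi\Phi(G,H)\,\dl\lambda$ via Proposition~\ref{pr:IntBilhlambda} and then estimates the integrand directly by $a\,|G|\,|H|$. You instead invoke Proposition~\ref{pr:NormTensorOp} (with roles of primed and unprimed spaces swapped through the bidual identification) to identify the bilinear bound $a$ with the operator norm of $L_\Phi$ relative to the projective norm on $U\otimes V$, then chain the trivial estimate $|\eta(\Xi)|\le|\eta|(\Xi)$ with the first part of the proposition. Your version is correct and is arguably more conceptual, exposing that the second part is a formal consequence of the first plus the duality between the cross norms; the paper's version is more self-contained (it does not need Proposition~\ref{pr:NormTensorOp} at this point) and avoids the dual-pair bookkeeping that you rightly flag as the delicate step. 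Either route is valid; you have correctly handled the identifications that make yours work.
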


\begin{proof}
Write $ \nu = H \cdot \lambda $, where $ \lambda $ is a finite positive measure and where
$ H \in L^1(\lambda, V) $. Now observe that, by Proposition~\ref{pr:IntBilhlambda},
$
 |G \otimes \nu|_\vee
 =
 |(G \otimes H) \cdot \lambda|_\vee
 =
 |G \otimes H|_\vee \cdot \lambda
 =
 (|G| \, |H|) \cdot \lambda
 =
 |G| \cdot |\nu|
$. An analogous observation holds for the the projective norm.
To prove the second part, observe that
$
 \bigl| \int_\Xi \Phi(G, \dl \nu) \bigr|
 =
 \bigl| \int_\Xi \Phi \bigl( G(\xi), H(\xi) \bigr) \, \lambda(\dl \xi) \bigr|
 \le
 a \int_\Xi |G| \, |H| \,\dl \lambda
 =
 a \int_\Xi |G| \,\dl |\nu| $.
\end{proof}

Now take measurable spaces $ (\Xi_1, \scrpt X_1), \ldots, (\Xi_r, \scrpt X_r) $.
Let $ \nu_1 $ be a $ V_1 $-valued vector measure on $ (\Xi_1, \scrpt X_1) $. Next, for
each $ k = 1, 2, \ldots, r $ take a \emph{transition kernel} $ \nu_k \Colon \Xi_1 \times \cdots \times \Xi_{k-1}
\times \scrpt X_k \to V_k $, i.~e., assume that the map $ A \mapsto \nu_k(\xi_1, \ldots, \xi_{k-1}, A) $
is a $ V_k $-valued vector measure for all $ \xi_1 \in \Xi_1, \ldots, \xi_{k-1} \in \Xi_{k-1} $,
and that the map $ (\xi_1, \ldots, \xi_{k-1}) \mapsto \nu_k(\xi_1, \ldots, \xi_{k-1}, A) $
is measurable with respect to the product $ \sigma $-algebra $ \scrpt X_1 \otimes \cdots \otimes
\scrpt X_{k-1} $ for all $ A \in \scrpt X_k $. Finally, let $ G \Colon \Xi_1 \times \cdots \times \Xi_r
\to U $ be a product measurable function. Then one can consider the integral
\begin{align*}
 & J :=
 \\
 & \int_{\Xi_1} \cdots \int_{\Xi_{r-1}} \int_{\Xi_r}
 G(\xi_1, \ldots, \xi_r)
   \otimes
 \nu_r(\xi_1, \ldots, \xi_{r-1}, \dl \xi_r)
   \otimes
 \nu_{r-1}(\xi_1, \ldots, \xi_{r-2}, \dl \xi_{r-1})
   \otimes \cdots \otimes
 \nu_1(\dl \xi_1)
 \, ,
\end{align*}
provided than all relevant functions are in the suitable $ L^1 $ spaces.

\begin{definition}
\label{df:Phisuccmeas}
Let $ G, \nu_1, \ldots, \nu_r $ be as before and let $ \Phi \Colon U \times V_1 \times \cdots \times V_r \to W $
be a $ (r+1) $-linear map. There exists a unique linear map
$ L_\Phi \Colon U \otimes V_1 \otimes \cdots \otimes V_r \to W $, such that
$ L_\Phi(u \otimes v_1 \otimes \cdots \otimes v_r) = \Phi(u, v_1, \ldots, v_r) $
for all $ u \in U $, $ v_1 \in V_1 $, \ldots, $ v_r \in V_r $. Define:
\[
 \int_{\Xi_1} \cdots \int_{\Xi_{r-1}} \int_{\Xi_r}
 \Phi \Bigl(
   G(\xi_1, \ldots, \xi_r), \>
   \nu_r(\xi_1, \ldots, \xi_{r-1}, \dl \xi_r), \>
   \nu_{r-1}(\xi_1, \ldots, \xi_{r-2}, \dl \xi_{r-1}), \>
     \ldots, \>
   \nu_1(\dl \xi_1)
 \Bigr)
\]
to be $ L_\Phi J $, provided that $ J $ defined as above exists.
\end{definition}

\begin{remark}
The preceding definition may be ambiguous as it may not be clear which variable is associated
to which space. More strictly, one should write $ \int_{\xi_1 \in \Xi_1} \cdots \int_{\xi_{r-1} \in \Xi_{r-1}}
\int_{\xi_r \in \Xi_r} $. However, in the main part of the present paper, we shall always integrate
over the same space $ \Xi $.
\end{remark}

\begin{definition}
For a $ U $-valued vector measure $ \mu $ on $ (\Xi, \scrpt X) $ and a $ V $-valued
vector measure $ \nu $ on $ (\Upsilon, \scrpt Y) $, define the vector measure
$ \mu \otimes \nu $ on $ (\Xi \times \Upsilon, \scrpt X \otimes \scrpt Y) $ as the unique
$ U \otimes V $-valued measure which satisfies $ (\mu \otimes \nu)(A \times B)
= \mu(A) \otimes \nu(B) $ for all $ A \in \scrpt X $ and all $ B \in \scrpt Y $.
\end{definition}

To see that $ \mu \otimes \nu $ actually exists, we can define it componentwise in terms
of product measures. Observe that in the case $ U = V = \RR $, $ \mu \otimes \nu $ coincides
with the usual product measure. Next, observe that if $ \mu = G \cdot \kappa $ and
$ \nu = H \cdot \lambda $, where $ \kappa $ and $ \lambda $ are positive measures,
we have
\[
 \int_{\Xi \times \Upsilon} \Phi \bigl( F, \dl \mu \otimes \dl \nu \bigr) =
 \int_{\Xi \times \Upsilon} \Phi \bigl( F(\xi, \eta), G(\xi) \otimes h(\eta) \bigr) \,
 \kappa(\dl \xi) \otimes \lambda(\dl \eta)
\]
for all bilinear maps $ \Phi \Colon Z \times (U \otimes V) \to W $ and all maps
$ F \in L^1(\mu \otimes \nu, W) $. In particular, one can briefly write
$ (\mu \otimes \nu)(\dl \xi \otimes \dl \eta) = G(\xi) \otimes H(\eta) \>
\kappa(\dl \xi) \otimes \lambda(\dl \eta) $. In other words, letting
$ \breve G(\xi, \eta) := G(\xi) $ and $ \breve H(\xi, \eta) := H(\eta) $, we have
$ \mu \otimes \nu = (\breve G \otimes \breve H) \cdot (\kappa \otimes \lambda) $.

\begin{proposition}
For any two vector measures $ \mu $ and $ \nu $ with values in normed spaces, we have
$ |\mu \otimes \nu|_\vee = |\mu \otimes \nu|_\wedge = |\mu| \otimes |\nu| $.
\end{proposition}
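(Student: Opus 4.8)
The plan is to reduce the identity to the density representation recalled just above it. Since the codomains $ U $ and $ V $ are finite-dimensional, the total variations $ |\mu| $ and $ |\nu| $ are finite positive measures, so by the Radon--Nikod\'ym theorem I can write $ \mu = G \cdot |\mu| $ and $ \nu = H \cdot |\nu| $ for densities $ G \in L^1(|\mu|, U) $ and $ H \in L^1(|\nu|, V) $. Feeding these representations into the relation $ |K \cdot \lambda| = |K| \cdot \lambda $ (valid for a density $ K $ and a positive reference measure $ \lambda $, with respect to \emph{any} norm on the codomain) and cancelling the finite measures $ |\mu| $ and $ |\nu| $, I obtain $ |G| = 1 $ $ |\mu| $-almost everywhere and $ |H| = 1 $ $ |\nu| $-almost everywhere.

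Next I would invoke the product-density formula recalled just before the statement: setting $ \breve G(\xi, \eta) := G(\xi) $ and $ \breve H(\xi, \eta) := H(\eta) $, one has $ \mu \otimes \nu = (\breve G \otimes \breve H) \cdot (|\mu| \otimes |\nu|) $, where $ \breve G \otimes \breve H \in L^1(|\mu| \otimes |\nu|, U \otimes V) $ since Fubini's theorem factorises $ \int |\breve G \otimes \breve H| \,\dl(|\mu| \otimes |\nu|) $ into $ |\mu|(\Xi) \, |\nu|(\Upsilon) < \infty $, using the first step. Applying the relation $ |K \cdot \lambda| = |K| \cdot \lambda $ first with the injective and then with the projective norm on $ U \otimes V $ as the codomain norm then yields $ |\mu \otimes \nu|_\vee = |\breve G \otimes \breve H|_\vee \cdot (|\mu| \otimes |\nu|) $ and $ |\mu \otimes \nu|_\wedge = |\breve G \otimes \breve H|_\wedge \cdot (|\mu| \otimes |\nu|) $. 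Because the injective and the projective norms are both cross norms, $ |\breve G(\xi, \eta) \otimes \breve H(\xi, \eta)|_\vee = |G(\xi)| \, |H(\eta)| = |\breve G(\xi, \eta) \otimes \breve H(\xi, \eta)|_\wedge $, and by the first step this equals $ 1 $ for $ (|\mu| \otimes |\nu|) $-almost every $ (\xi, \eta) $. Hence $ |\mu \otimes \nu|_\vee $ and $ |\mu \otimes \nu|_\wedge $ both coincide with $ |\mu| \otimes |\nu| $, which is the claimed identity.

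I do not expect a genuine obstacle: the whole argument is bookkeeping around the density representation, and the only points that need a word of justification are the $ L^1 $-membership that licenses the product-density formula (handled by Fubini, as above) and the measurability of $ \breve G $ and $ \breve H $ (immediate, as these are $ G $ and $ H $ precomposed with the coordinate projections). Should one prefer to avoid the normalisation $ |G| = |H| = 1 $, an equivalent route is to keep arbitrary positive reference measures $ \kappa, \lambda $ with $ \mu = G \cdot \kappa $, $ \nu = H \cdot \lambda $, observe via the scalar ($ \RR \otimes \RR = \RR $) case of the product-density formula that $ |\mu| \otimes |\nu| = (|G| \cdot \kappa) \otimes (|H| \cdot \lambda) $ has density $ (\xi, \eta) \mapsto |G(\xi)| \, |H(\eta)| $ with respect to $ \kappa \otimes \lambda $, and note that this matches the density obtained above for both $ |\mu \otimes \nu|_\vee $ and $ |\mu \otimes \nu|_\wedge $; the degenerate case $ \mu = 0 $ or $ \nu = 0 $ is trivial.
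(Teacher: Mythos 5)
Your argument is correct and follows essentially the same route as the paper: represent both measures via densities with respect to positive reference measures, appeal to the identity $|K\cdot\lambda| = |K|\cdot\lambda$ for each of the two tensor norms, and finish with the cross-norm property of $|\cdot|_\vee$ and $|\cdot|_\wedge$. The only difference is that you normalize the reference measures to $|\mu|$ and $|\nu|$ (forcing $|G|=|H|=1$ a.e.), whereas the paper keeps arbitrary $\kappa,\lambda$ and cancels the densities directly; as you note yourself at the end, the two variants are interchangeable.
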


\begin{proof}
Write $ \mu = G \cdot \kappa $ and $ \nu = H \cdot \lambda $, where $ \kappa $ and $ \lambda $
are positive measures and $ G $ and $ H $ are in the suitable $ L^1 $ spaces.
Letting $ \breve G $ and $ \breve H $ be as above, observe that
$
 |\mu \otimes \nu|_\vee
 =
 |\breve G \otimes \breve H|_\vee \cdot (\kappa \otimes \lambda)
 =
 |\breve G| \, |\breve H| \cdot (\kappa \otimes \lambda)
 =
 (|G| \cdot \kappa) \otimes (|H| \cdot \lambda)
 =
 |\mu| \otimes |\nu|
$
and similarly for the projective norm.
\end{proof}

\bigskip

\ACKNO{%
  The author is grateful to Robert Gaunt and Ywik Swan for fruitful discussions.%
}
\bibliographystyle{siam}
\bibliography{LipCLT}    

\end{document}